\numberwithin{equation}{section}
\newtheorem{theorem}{Theorem}[section]
\newtheorem{lemma}[theorem]{Lemma}
\newtheorem{corollary}[theorem]{Corollary}
\theoremstyle{definition}
\newtheorem{definition}[theorem]{Definition}
\theoremstyle{remark}
\newtheorem{remark}[theorem]{Remark}
\DeclareMathOperator{\sgn}{sign}
\begin{document}

\title[Zakharov--Kuznetsov Equation]
{Initial-Boundary Value Problems in a Half-Strip for Two-Dimensional Zakharov--Kuznetsov Equation}

\author[A.V.~Faminskii]{Andrei~V.~Faminskii}

\thanks{The publication was financially supported by the Ministry of Education and Science of the Russian Federation (the Agreement 02.A03.21.0008 and the Project 1.962.2017/PCh)}

\address{Peoples' Friendship University of Russia (RUDN University), 6 Miklukho--Maklaya Street, Moscow, 117198, Russian Federation}

\email{afaminskii@sci.pfu.edu.ru}

\subjclass[2010]{Primary 35Q53; Secondary 35B40}

\keywords{Zakharov--Kuznetsov equation, initial-boundary value problem, global solution, decay}

\date{}

\begin{abstract}
Initial-boundary value problems in a half-strip with different types of boundary conditions for two-dimensional Zakharov--Kuznetsov equation are considered. Results on global existence, uniqueness and long-time decay of weak and regular solutions are established.
\end{abstract}

\maketitle

\section{Introduction. Description of main results}\label{S1}
The two dimensional Zakharov--Kuznetsov equation (ZK)
\begin{equation}\label{1.1}
u_t+bu_x+u_{xxx}+u_{xyy}+uu_x=f(t,x,y)
\end{equation}
($b$ is a real constant) is a reduction of the three-dimensional one which was derived in \cite{ZK} for description of  ion-acoustic waves in magnetized plasma. Now this equation is considered as a model of two-dimensional nonlinear waves in dispersive media propagating in one preassigned ($x$) direction with deformations in the transverse ($y$) direction. A rigorous derivation of the ZK model can be found, for example, in \cite{H-K, LLS}. It is one of the variants of multi-dimensional generalizations for Korteweg--de~Vries equation (KdV) 
$u_t+bu_x+u_{xxx}+uu_x=f(t,x)$.

The theory of solubility and well-posedness for ZK equation and its generalizations is most developed for the pure initial-value problem. For the considered two-dimensional case the corresponding results in different functional spaces can be found in \cite{S, F89, F95, BL, LP09, LP11, RV, FLP, BJM, GH, MP, FP, G}. For initial-boundary value problems the theory is most developed for domains of a type $I\times \mathbb R$, where $I$ is an interval (bounded or unbounded) on the variable $x$, that is, the variable $y$ varies in the whole line (\cite{F02, F07, FB08, F08, ST, F12, DL}). 

On the other hand, from the physical point of view boundary-value problems for this equation in domains there the variable $y$ varies in a bounded interval seem at least the same important.
Unfortunately certain technique developed for the case $y\in\mathbb R$ (especially related to profound investigation of the corresponding linear equation) up to this moment is extended to the case of bounded $y$ only partially. An initial-boundary value problem in a strip $\mathbb R\times (0,L)$ with periodic boundary conditions was considered in \cite{LPS} for ZK equation and local well-posedness result was established in the spaces $H^s$ for $s>3/2$. This result was improved in \cite{MP} where $s\geq 1$, in addition, in the space $H^1$ appropriate conservation laws provided global well-posedness. 

Another way of the study is based on the use of certain weighted spaces. Initial-boundary value problems in a strip $\mathbb R\times (0,L)$ with homogeneous boundary conditions of different types -- Dirichlet, Neumann or periodic -- were considered in \cite{BF13} for ZK equation with more general nonlinearity and results on global well-posedness in classes of weak solutions with power weights at $+\infty$ were established. Similar results in the case of exponential weights for ZK equation itself under homogeneous Dirichlet boundary conditions can be found in \cite{F15-2}. Global well-posedness results for ZK equation with certain parabolic regularization also for the initial-boundary value problem in a strip $\mathbb R\times (0,L)$ with homogeneous Dirichlet boundary conditions were obtained in \cite{F15-1, F15-2, L15, L16}. Global well-posedness results for a bounded rectangle can be found in \cite{DL, STW}.

An initial-boundary value problem in a half-strip $\mathbb R_+\times (0,L)$ with homogeneous Dirichlet boundary conditions was studied in \cite{LT,L13} and global well-posedness in Sobolev spaces with exponential weights when $x\to +\infty$ was proved.  

In the present paper we consider initial-boundary value problems in a domain $\Pi_T^+=(0,T)\times\Sigma_+$, where $\Sigma_+=\mathbb R_+\times (0,L)=\{(x,y): x>0, 0<y<L\}$ is a half-strip of a given width $L$ and $T>0$ is arbitrary, for equation \eqref{1.1}
with an initial condition
\begin{equation}\label{1.2}
u(0,x,y)=u_0(x,y),\qquad (x,y)\in\Sigma_+,
\end{equation}
boundary condition 
\begin{equation}\label{1.3}
u(t,0,y)=\mu(t,y),\qquad (t,y)\in B_{T}=(0,T)\times (0,L),
\end{equation}
and boundary conditions for $(t,x)\in \Omega_{T,+}=(0,T)\times\mathbb R_+$ of one of the following four types: 
\begin{equation}\label{1.4}
\begin{split}
\mbox{whether}\qquad &a)\mbox{ } u(t,x,0)=u(t,x,L)=0,\\
\mbox{or}\qquad &b)\mbox{ } u_y(t,x,0)=u_y(t,x,L)=0,\\
\mbox{or}\qquad &c)\mbox{ } u(t,x,0)=u_y(t,x,L)=0,\\
\mbox{or}\qquad &d)\mbox{ } u \mbox{ is an  $L$-periodic function with respect to $y$.}
\end{split}
\end{equation}
We use the notation "problem \eqref{1.1}--\eqref{1.4}" for each of these four cases.

The main results consist of theorems on global solubility and well-posedness in classes of weak and regular solutions in certain weighted  at $+\infty$ Sobolev spaces. Both power and exponential weights are allowed. We consider homogeneous boundary conditions when $y=0$, $y=L$ in the cases a)--c) and non-homogeneous one when $x=0$ since didn't succeed to find any specific smoothness properties of solutions on the planes $y=\text{const}$ in comparison with the planes $x=\text{const}$.

Besides that, results on large-time decay of small solutions similar to the ones from \cite{LT, L13} when $\mu\equiv 0$, $f\equiv 0$, are established in the cases a) and c).

All global existence results are based on conservation laws, which in the case $\mu\equiv 0$, $f\equiv 0$ for smooth solutions are written as follows:
\begin{equation}\label{1.5}
\frac{d}{dt}\iint_{\Sigma_+} u^2\,dxdy + \int_0^L u_x^2\big|_{x=0}\,dy=0,
\end{equation}
\begin{equation}\label{1.6}
\frac{d}{dt}\iint_{\Sigma_+} \bigl(u_x^2+u_y^2-\frac13 u^3\bigr)\,dxdy + 
\int_0^L (u_{xx}^2+bu_x^2)\big|_{x=0}\,dy=0.
\end{equation}
Besides that, we use the local smoothing effect which in the most simple form can be written as
\begin{equation}\label{1.7}
\int_0^T\!\!\int_0^r\!\!\int_0^L (u_x^2+u_y^2)\,dxdy \leq c(r,\|u_0\|_{L_2(\Sigma_+)}).
\end{equation}

In what follows (unless stated otherwise) $j$, $k$, $l$, $m$, $n$ mean non-negative integers, $p\in [1,+\infty]$, $s\in\mathbb R$.  For any multi-index $\alpha=(\alpha_1,\alpha_2)$ let $\partial^\alpha =\partial^{\alpha_1}_{x}\partial^{\alpha_2}_{y}$, let
\begin{equation*}
|D^k\varphi|=\Bigl(\sum_{|\alpha|\leq k}(\partial^\alpha \varphi)^2\Bigr)^{1/2}, \qquad
|D\varphi|=|D^1\varphi|.
\end{equation*}
Let $L_{p,+}=L_p(\Sigma_+)$, $W_{p,+}^k=W_p^k(\Sigma_+)$, $H^s_+=H^s(\Sigma_+)$.

Introduce special function spaces taking into account boundary conditions \eqref{1.4}. Let 
$\Sigma= \mathbb R\times (0,L)$, 
$\widetilde{\EuScript S}(\overline{\Sigma})$ be a space of infinitely smooth on $\overline{\Sigma}$ functions $\varphi(x,y)$ such that $\displaystyle{(1+|x|)^n|\partial^\alpha\varphi(x,y)|\leq c(n,\alpha)}$ for any $n$, multi-index $\alpha$, $(x,y)\in \overline{\Sigma}$ and $\partial_y^{2m}\varphi\big|_{y=0} =\partial_y^{2m}\varphi\big|_{y=L}=0$ in the case a), $\partial_y^{2m+1}\varphi\big|_{y=0} =\partial_y^{2m+1}\varphi\big|_{y=L}=0$ in the case b), $\partial_y^{2m}\varphi\big|_{y=0} =\partial_y^{2m+1}\varphi\big|_{y=L}=0$ in the case c), $\partial_y^{m}\varphi\big|_{y=0} =\partial_y^{m}\varphi\big|_{y=L}$ in the case d) for any $m$.

Let $\widetilde H^s$ be the closure of $\widetilde{\EuScript S}(\overline{\Sigma})$ in the norm $H^s(\Sigma)$ and $\widetilde H_+^s$ be the restriction of $\widetilde H^s$ on $\Sigma_+$.

It is easy to see, that $\widetilde H^0_+=L_{2,+}$; for $j \geq 1$ in the case a) $\widetilde H^j_+=\{\varphi\in H^j_+: \partial_y^{2m}\varphi|_{y=0}=\partial_y^{2m}\varphi|_{y=L}=0, \ 2m<j\}$, in the case b) $\widetilde H^j_+=\{\varphi\in H^j_+: \partial_y^{2m+1}\varphi|_{y=0}=\partial_y^{2m+1}\varphi|_{y=L}=0,\ 2m+1<j\}$,  in the case d) $\widetilde H^j_+=\{\varphi\in H^j_+: \partial_y^{m}\varphi|_{y=0}=\partial_y^{m}\varphi|_{y=L}, \ m<j\}$.  

We also use an anisotropic Sobolev space $\widetilde H^{(0,k)}_+$ which is defined as the restriction on $\Sigma_+$ of a space $\widetilde H^{(0,k)}$, where the last space is the closure of $\widetilde{\EuScript S}(\overline{\Sigma})$ in the norm $\sum\limits_{m=0}^k \|\partial_y^m\varphi\|_{L_2(\Sigma)}$.

We say that $\rho(x)$ is an admissible weight function if $\rho$ is an infinitely smooth positive function on $\overline{\mathbb R}_+$ such that $|\rho^{(j)}(x)|\leq c(j)\rho(x)$ for each natural $j$ and all $x\geq 0$. Note that such a function has not more than exponential growth and not more than exponential decrease at $+\infty$. It was shown in \cite{F12} that $\rho^s(x)$ for any $s\in\mathbb R$ is also an admissible weight function. Any exponent $e^{2\alpha x}$ as well as $(1+x)^{2\alpha}$ are admissible weight functions. 

As an another important example of admissible functions, we define 
$\rho_0(x)\equiv 1+ \frac{2}{\pi}\arctan x$. Note that both $\rho_0$ and $\rho'_0$ are admissible weight functions.

For an admissible weight function $\rho(x)$ let $\widetilde H^{k,\rho(x)}_+$ be a space of functions $\varphi(x,y)$ such that $\varphi\rho^{1/2}(x)\in \widetilde H^k_+$. Similar definitions are used for $\widetilde H^{(0,k),\rho(x)}_+$, $H^{k,\rho(x)}_+$. Let $L_{2,+}^{\rho(x)}= \widetilde H^{0,\rho(x)}_+= \{\varphi(x,y): \varphi\rho^{1/2}(x)\in L_{2,+}\}$. Obviously, $L_{2,+}^{\rho_0(x)}=L_{2,+}$.

We construct solutions to the considered problems in spaces $X_w^{k,\rho(x)}(\Pi_T^+)$ and $X^{k,\rho(x)}(\Pi_T^+)$ for admissible weight functions $\rho(x)$, such that $\rho'(x)$ are also admissible weight functions, consisting of functions $u(t,x,y)$ such that in the case  $X_w^{k,\rho(x)}(\Pi_T^+)$ 
\begin{equation}\label{1.8}
\partial_t^j u\in C_w([0,T]; \widetilde H_+^{k-3j,\rho(x)})\cap L_2(0,T;\widetilde H_+^{k-3j+1,\rho'(x)})
\end{equation}
(the symbol $C_w$ denotes the space of  weakly continuous mappings) for $k-3j\geq 0$
(let $X_w^{\rho(x)}(\Pi_T^+)=X_w^{0,\rho(x)}(\Pi_T^+)$), while in the case $X^{k,\rho(x)}(\Pi_T^+)$ the weak continuity with respect to $t$ in \eqref{1.8} is substituted by the strong one. 

Define also
\begin{equation}\label{1.9}
\lambda^+(u;T) =
\sup_{x_0\geq 0}\int_0^T\! \int_{x_0}^{x_0+1}\! \int_0^L u^2\,dydxdt.
\end{equation}

For description of properties of the boundary data $\mu$ introduce anisotropic functional spaces. Let $B=\mathbb R^t \times (0,L)$. Define the functional space $\widetilde{\EuScript S}(\overline{B})$ similarly to $\widetilde{\EuScript S}(\overline{\Sigma})$, where the variable $x$ is substituted by $t$. Let $\widetilde H^{s/3,s}(B)$ be the closure of $\widetilde{\EuScript S}(\overline{B})$ in the norm $H^{s/3,s}(B)$. 

More exactly, let $\psi_l(y)$, $l=1,2\dots$, be the orthonormal in $L_2(0,L)$ system of the eigenfunctions for the operator $(-\psi'')$ on the segment $[0,L]$ with corresponding boundary conditions   $\psi(0)=\psi(L)=0$ in the case a), $\psi'(0)=\psi'(L)=0$ in the case b), $\psi(0)=\psi'(L)=0$ in the case c), $\psi(0)=\psi(L),\psi'(0)=\psi'(L)$ in the case d), $\lambda_l$ be the corresponding eigenvalues. Such systems are well-known and can be written in trigonometric functions.

For any  $\mu\in \widetilde{\EuScript S}(\overline{B})$, $\theta\in\mathbb R$ and $l$ let
\begin{equation}\label{1.10}
\widehat\mu(\theta,l) \equiv \iint_B e^{-i\theta t}\psi_l(y)\mu(t,y)\,dtdy.
\end{equation}
Then the norm in $H^{s/3,s}(B)$ is defined as $\Bigl(\sum\limits_{l=1}^{+\infty} 
\bigl\| (|\theta|^{2/3}+l^2)^{s/2}\widehat\mu(\theta,l)\bigr\|_{L_2(\mathbb R^\theta)}^2\Bigr)^{1/2}$ and the norm in $H^{s/3,s}(I\times (0,L))$ for any interval $I\subset \mathbb R$ as the restriction norm. 

The use of these norm is justified by the following fact. Let $v(t,x,y)$ be the appropriate solution to the initial value problem
$$
v_t+v_{xxx}+v_{xyy}=0,\qquad v\big|_{t=0}=v_0.
$$
Then according to \cite{F08} uniformly with respect to $x\in \mathbb R$
\begin{equation}\label{1.11}
\bigl\|D_t^{1/3}v\bigr\|_{H_{t,y}^{s/3,s}(\mathbb R^2)}^2+
\bigl\|\partial_x v\bigr\|_{H_{t,y}^{s/3,s}(\mathbb R^2)}^2+
\bigl\|\partial_y v\bigr\|_{H_{t,y}^{s/3,s}(\mathbb R^2)}^2
\sim \|v_0\|_{H^s(\mathbb R^2)}^2
\end{equation}
(here $D^\alpha$ denotes the Riesz potential of the order $-\alpha$).

Introduce the notion of weak solutions to the considered problems.

\begin{definition}\label{D1.1}
Let $u_0\in L_{2,+}$, $\mu\in L_2(B_T)$, $f\in L_1(0,T;L_{2,+})$. A function $u\in L_\infty(0,T;L_{2,+})$ is called a generalized solution to problem \eqref{1.1}--\eqref{1.4} if for any function $\phi\in L_2(0,T;\widetilde H_+^2)$, such that $\phi_t, \phi_{xxx}, \phi_{xyy}\in L_2(\Pi_T^+)$, $\phi\big|_{t=T}=0$, $\phi\big|_{x=0} =\phi_x\big|_{x=0}=0$, the following equality holds:
\begin{multline}\label{1.12}
\iiint_{\Pi_T^+}\Bigl[u(\phi_t+b\phi_x+\phi_{xxx}+\phi_{xyy}) +\frac 12 u^2 \phi_x +f\phi\Bigr]\,dxdydt \\
+\iint_{\Sigma_+} u_0\phi\big|_{t=0}\,dxdy +
\iint_{B_T} \mu\phi_{xx}\big|_{x=0}\,dydt =0.
\end{multline}
\end{definition}

\begin{remark}\label{R1.1}
Note that the integrals in \eqref{1.12} are well defined (in particular, since $\phi_x\in L_2(0,T;H_+^2)\subset L_2(0,T;L_{\infty,+})$).
\end{remark}

Now we can formulate the main results of the paper concerning existence and uniqueness.

\begin{theorem}\label{T1.1}
Let $u_0\in L_{2,+}^{\rho(x)}$, $f\in L_1(0,T; L_{2,+}^{\rho(x)})$ for certain $T>0$ and an admissible weight function $\rho(x)$, such that $\rho'(x)$ is also an admissible weight function. Let $\mu\in \widetilde H^{s/3,s}(B_T)$ for certain $s>3/2$. Then there exists a weak solution to problem \eqref{1.1}--\eqref{1.4} $u \in X_w^{\rho(x)}(\Pi_T^+)$, moreover, $\lambda^+(|Du|;T)<+\infty$. If, in addition, $\rho^{1/2}(x)\leq c\rho'(x) \ \forall x\geq 0$, then this solution is unique in $X_w^{\rho(x)}(\Pi_T^+)$.
\end{theorem}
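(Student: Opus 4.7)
The plan is to construct the weak solution by combining a boundary lifting with Galerkin approximations in $y$ along the eigenbasis $\{\psi_l\}$ and a parabolic regularization in $(x,y)$, then passing to the limit via weighted energy estimates and Aubin--Lions compactness. First, I would reduce to homogeneous Dirichlet data at $x=0$ by writing $u=V+v$, where $V$ is a lifting of $\mu$ of regularity dictated by \eqref{1.11}. Since $s>3/2$, the trace identity \eqref{1.11} applied in reverse (and adapted to the boundary conditions \eqref{1.4} via the $\psi_l$ expansion) allows a construction of $V$ belonging to $C([0,T];\widetilde H^{s+1/2}_+)$ with rapid decay in $x$, so $V$ lies in every weighted space $L_{2,+}^{\rho(x)}$ we need, and $v$ satisfies the same equation with zero Dirichlet data at $x=0$ and a modified forcing $\tilde f$ in $L_1(0,T;L_{2,+}^{\rho(x)})$.

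For the approximate problems I would introduce a parabolic term $-\varepsilon(u_{xx}+u_{yy})$ together with a high-frequency cutoff of $v$ in the expansion $v=\sum v_l(t,x)\psi_l(y)$, producing a finite system of KdV--Burgers-type equations on the half-line where classical methods give smooth solutions. The key a priori estimate is obtained by multiplying the equation by $2u\rho(x)$ and integrating over $\Sigma_+$. The linear dispersion furnishes the weighted analogue of \eqref{1.5},
\begin{equation*}
\frac{d}{dt}\iint_{\Sigma_+} u^2\rho\,dxdy +\int_0^L u_x^2\big|_{x=0}\rho(0)\,dy +\iint_{\Sigma_+}\bigl(3u_x^2+u_y^2\bigr)\rho'\,dxdy = R(t),
\end{equation*}
where $R(t)$ collects the forcing, the lifting, and lower-order terms $u^2\rho'''$, $uu_xρ'$ and $\int u^3\rho'$ coming from the nonlinearity. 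The cubic term is controlled, uniformly on finite time intervals, by the local smoothing quantity \eqref{1.7} combined with the two-dimensional Gagliardo--Nirenberg inequality in the weighted setting; admissibility of $\rho$ and $\rho'$ is used precisely here, and it is what yields the a priori bound in $X_w^{\rho(x)}(\Pi_T^+)$ together with $\lambda^+(|Du|;T)<+\infty$.

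The passage to the limit $\varepsilon\to 0$ (and in the Galerkin index) is standard: the weighted $L_2$ bound and the weighted smoothing bound on $|Du|$ yield, via Aubin--Lions on bounded subdomains $(0,T)\times(0,r)\times(0,L)$ and a diagonal argument, strong convergence in $L_2(\Pi_T^+\cap\{x<r\})$ for each $r$, enough to pass to the limit in the quadratic term $u^2\phi_x$ of the integral identity \eqref{1.12}. Weak continuity in $t$ with values in $\widetilde H_+^{0,\rho(x)}$ follows from the equation and these bounds. I expect the most delicate point to be a careful treatment of the lifting contributions in $R(t)$, since $V$ is not better than $H_+^{s+1/2}$ with $s>3/2$ and one must control terms like $\iint VV_x u\rho$ and $\iint V_xu^2\rho$ using only the anisotropic trace bound \eqref{1.11}, together with the embedding of $\widetilde H^{s/3,s}(B_T)$ into $L_\infty(B_T)$ for $s>3/2$.

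For uniqueness under $\rho^{1/2}\le c\rho'$, I would set $w=u_1-u_2$, apply the weighted multiplier $2w\rho$, and use $w|_{x=0}=0$ to kill the Dirichlet boundary contribution. The nonlinear difference $\frac12((u_1+u_2)w)_x$ produces, after one integration by parts, the terms $-\iint(u_1+u_2)_xw^2\rho$ and $-\iint(u_1+u_2)w^2\rho'$. The first is bounded by $\|D(u_1+u_2)\|_{L_2(\Sigma_+,\rho')}\,\|w\rho^{1/2}\|_{L_4(\Sigma_+)}^2$ via Cauchy--Schwarz with weight split $\rho=\rho^{1/2}\rho^{1/2}\le c\rho^{1/2}\rho'$; the two-dimensional interpolation $\|w\rho^{1/2}\|_{L_4}^2\le C\|w\rho^{1/2}\|_{L_2}\|D(w\rho^{1/2})\|_{L_2}$, together with $|\rho'|\le c\rho$ and the smoothing term $\iint|Dw|^2\rho'$ available on the left, lets me absorb the dangerous factor and close a Gronwall inequality for $\|w\rho^{1/2}\|_{L_2(\Sigma_+)}^2$ with an integrable coefficient, using that $u_1,u_2\in X_w^{\rho(x)}(\Pi_T^+)$ give $\|D u_k\|_{L_2(\Sigma_+,\rho')}\in L_2(0,T)$. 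The second term is handled similarly without derivatives on $u_1+u_2$. This Gronwall step, with the interpolation carried out in the weighted norm and the condition $\rho^{1/2}\le c\rho'$ converting available information on $\rho'$ into control of the $\rho$-norm, is the main technical obstacle.
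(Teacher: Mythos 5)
Your existence argument follows essentially the same route as the paper: lift the boundary data by the ``boundary potential'' (the paper's $J(t,x,y;\mu)$, cut off by $\eta(2-x)$), reduce to homogeneous data at $x=0$, approximate, and close via the weighted energy identity \eqref{2.48}, the interpolation \eqref{1.15} for the cubic term, and the local smoothing bound \eqref{3.19}. Two caveats. First, the lifting does \emph{not} lie in $C([0,T];\widetilde H^{s+1/2}_+)$: what the construction actually yields (Lemmas~\ref{L2.7}, \ref{L2.8}, Corollary~\ref{C2.1}) is $C_b(\mathbb R^t;\widetilde H^{s-1}_+)\cap L_2(0,T;\widetilde H^{s+1/2}_+)$, and it is the $L_2(0,T;W^1_{\infty,+})$ consequence of the latter (valid precisely because $s>3/2$) that makes the nonlinear terms tractable. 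Second, the lifting must be an exact solution of the homogeneous linear equation for $x>0$; a generic trace extension of the stated regularity would put $V_{xxx}$ only in a negative-order Sobolev space, so the ``modified forcing'' would \emph{not} be in $L_1(0,T;L_{2,+}^{\rho(x)})$ as you assert. The paper's choice of approximation (Lipschitz truncation $g_h$ of the nonlinearity plus cut-off data, solved by contraction in exponentially weighted spaces) rather than parabolic regularization is a difference of scheme, not of substance.

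The uniqueness step has a genuine gap in the weight bookkeeping. After Cauchy--Schwarz you are left with $\|w(\rho'\rho)^{1/4}\|_{L_4}^2$ (or, in your notation, $\|w\rho^{1/2}\|_{L_4}^2$), and you propose to interpolate it as $\|w\rho^{1/2}\|_{L_4}^2\le C\|w\rho^{1/2}\|_{L_2}\|D(w\rho^{1/2})\|_{L_2}$ and absorb the factor $\||Dw|\rho^{1/2}\|_{L_2}$ into the dissipation $\iint|Dw|^2\rho'\,dxdy$. That absorption requires $\rho\le c\rho'$, which fails for exactly the weights the theorem is meant to cover: for $\rho=(1+x)^{2\alpha}$, $\alpha\ge 1$ (Remark~\ref{R1.2}), one has $\rho/\rho'\sim(1+x)/(2\alpha)\to\infty$, and the hypothesis $\rho^{1/2}\le c\rho'$ together with $\rho'\le c\rho$ only gives $\rho/\rho'\le c\rho^{1/2}$, unbounded. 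The inequality $|\rho'|\le c\rho$ that you invoke points in the wrong direction. The correct device is the two-weight interpolation of Lemma~\ref{L1.1} with $\rho_1=\rho'$ on the gradient and $\rho_2=\rho$ on the function, i.e. $\|\varphi(\rho'\rho)^{1/4}\|_{L_{4,+}}\le c\||D\varphi|(\rho')^{1/2}\|_{L_{2,+}}^{1/2}\|\varphi\rho^{1/2}\|_{L_{2,+}}^{1/2}+c\|\varphi\rho^{1/2}\|_{L_{2,+}}$, which produces the gradient only against the available weight $\rho'$; the hypothesis $\rho^{1/2}\le c\rho'$ then enters through the elementary pointwise bound $(\rho/\rho')^3\le c\,\rho'\rho$ used in \eqref{4.9} to distribute the weight $\rho$ among the three factors $u$, $U+\Psi$, $(U\rho)_x$. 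Without this rebalancing the Gronwall inequality does not close for power weights. A secondary point: justifying the energy identity for the difference of two merely weak solutions in $X_w^{\rho(x)}$ is not automatic; the paper does it by viewing the difference as a solution of the linear problem of Lemma~\ref{L2.12} with $f_2=-(uu_x-\widetilde u\widetilde u_x)$ and using the identity \eqref{2.47} established there by closure from smooth solutions.
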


\begin{remark}\label{R1.2}
The exponential weight $\rho(x)\equiv e^{2\alpha x}$, $\alpha>0$, satisfies both existence and uniqueness assumptions. The power weight $\rho(x)\equiv (1+x)^{2\alpha}$, $\alpha>0$, satisfies existence assumptions and for $\alpha\geq 1$ -- uniqueness assumptions. If $u_0\in L_{2,+}$, $f\in L_1(0,T;L_{2,+})$ there exists a weak solution $u\in C_w([0,T];L_{2,+})$, $\lambda^+(|Du|;T)<+\infty$. Note that weak solutions of the type, constructed in Theorem~\ref{T1.1}, are not considered in \cite{LT,L13}.
\end{remark}

\begin{theorem}\label{T1.2}
Let $u_0\in \widetilde H^{1,\rho(x)}_+$, $f\in L_2(0,T; \widetilde H^{1,\rho(x)}_+)$ for certain $T>0$ and an admissible weight function $\rho(x)$, such that $\rho'(x)$ is also an admissible weight function. Let $\mu\in \widetilde H^{2/3,2}(B_T)$, $\mu(0,y)\equiv u_0(0,y)$. Then there exists a weak solution to problem \eqref{1.1}--\eqref{1.4} $u\in X_w^{1,\rho(x)}(\Pi_T^+)$, moreover, $\lambda^+(|D^2 u|;T)<+\infty$. If, in addition, $\rho'(x)\geq 1 \ \forall x\geq 0$, then this solution is unique in $X_w^{1,\rho(x)}(\Pi_T^+)$.
\end{theorem}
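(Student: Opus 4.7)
The plan is to subtract a suitable lifting of $\mu$ to reduce to the case of homogeneous boundary data at $x=0$, then apply a Galerkin-type approximation in the $y$-variable and close weighted energy estimates at the $L_2$ and $H^1$ levels patterned after the conservation laws \eqref{1.5}--\eqref{1.6}. First I would construct an auxiliary function $M(t,x,y)$ on $\Pi_T^+$ with $M\big|_{x=0}=\mu$ and sufficient regularity that $M\in C([0,T];\widetilde H^2_+)\cap L_2(0,T;\widetilde H^3_+)$ and the discrepancy $M_t+bM_x+M_{xxx}+M_{xyy}$ lies in $L_2(0,T;\widetilde H^{1,\rho(x)}_+)$. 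Because $\mu\in\widetilde H^{2/3,2}(B_T)$, such an $M$ is produced by the trace identity \eqref{1.11} after multiplication by an $x$-cut-off compatible with the admissible weight. The compatibility hypothesis $\mu(0,y)\equiv u_0(0,y)$ then ensures $v_0:=u_0-M(0,\cdot,\cdot)\in \widetilde H^{1,\rho(x)}_+$ with vanishing trace at $x=0$, and writing $u=v+M$ reduces the problem to one for $v$ with zero data at $x=0$ and a modified source $\tilde f\in L_2(0,T;\widetilde H^{1,\rho(x)}_+)$.

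Next I would expand $v$ in a Galerkin series $v^N(t,x,y)=\sum_{l=1}^N c_l(t,x)\psi_l(y)$ using the eigenbasis $\{\psi_l\}$ adapted to the $y$-boundary condition in \eqref{1.4}. Substitution converts the problem into a coupled system of $N$ KdV-type equations on the half-line for the coefficients $c_l(t,x)$, for which existence on $[0,T]$ and basic $L_2$-bounds follow from the author's previous work on KdV-type boundary-value problems. The weighted $L_2$-estimate is then obtained by testing with $v^N\rho$ and using the dispersive identity
\begin{equation*}
2\iint_{\Sigma_+}v^N(v^N_{xxx}+v^N_{xyy})\rho\,dxdy = \int_0^L(v^N_x)^2\big|_{x=0}\rho(0)\,dy + \iint_{\Sigma_+}\rho'\bigl(3(v^N_x)^2+(v^N_y)^2\bigr)dxdy + \text{l.o.t.},
\end{equation*}
which simultaneously produces the trace term at $x=0$ (cancelling the one introduced by $M$) and a smoothing gain weighted by $\rho'$; this in particular yields $\lambda^+(|Dv^N|;T)<+\infty$. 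The cubic contribution $\iint(v^N)^2v^N_x\rho$ is handled after one integration by parts and absorbed using anisotropic Sobolev embedding.

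The $H^1$-level bound is obtained by testing against $\bigl(-\partial_x^2v^N-\partial_y^2v^N-\tfrac13(v^N)^2\bigr)\rho$, mimicking \eqref{1.6}. The linear part produces $\int_0^L(v^N_{xx})^2\big|_{x=0}\rho(0)\,dy$ together with an $\widetilde H^{2,\rho'(x)}_+$-type dissipation, while the cubic terms reduce after two integrations by parts to integrals of $|Dv^N|^2v^N\rho$ and $(v^N)^2|Dv^N|^2\rho'$, controlled by combining the $L_\infty$-bound coming from $\widetilde H^{1,\rho}_+$ with the smoothing quantity $\lambda^+(|Dv^N|;T)$ already secured. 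I expect this to be the principal obstacle: the weight-derivative bookkeeping must close into a Gronwall inequality on $[0,T]$, and the perturbative terms induced by $M$ must be absorbable, which is precisely where the regularity $\mu\in\widetilde H^{2/3,2}(B_T)$ is used sharply. Passage to the limit $N\to\infty$ is then standard: weak-$*$ compactness in $L_\infty(0,T;\widetilde H^{1,\rho(x)}_+)$, weak compactness in $L_2(0,T;\widetilde H^{2,\rho'(x)}_+)$, and local strong compactness via Aubin--Lions on bounded subsets of $\Sigma_+$ suffice to identify the nonlinear limit.

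For uniqueness, set $w=u_1-u_2$; then $w$ satisfies the linear equation $w_t+bw_x+w_{xxx}+w_{xyy}+\tfrac12\bigl((u_1+u_2)w\bigr)_x=0$ with $w\big|_{x=0}=0$ and $w\big|_{t=0}=0$. Testing with $w\rho$ and integrating by parts leads, after treating $\iint(u_1+u_2)_xw^2\rho$ by one further integration by parts on $x$, to an energy inequality of the schematic form
\begin{equation*}
\tfrac12\frac{d}{dt}\iint_{\Sigma_+}w^2\rho\,dxdy+\iint_{\Sigma_+}\rho'\bigl(\tfrac32 w_x^2+\tfrac12 w_y^2\bigr)dxdy \leq C\bigl(1+\|u_1+u_2\|_{L_{\infty,+}}\bigr)^2\iint_{\Sigma_+}w^2(\rho+\rho')\,dxdy.
\end{equation*}
The assumption $u_1,u_2\in X_w^{1,\rho(x)}(\Pi_T^+)$ combined with $\rho'\geq 1$ delivers $u_1+u_2\in L_2(0,T;L_{\infty,+})$ via interpolation between $\widetilde H^{1,\rho}_+$ and $\widetilde H^{2,\rho'}_+$, so Gronwall yields $w\equiv 0$. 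The weakening of the uniqueness requirement from $\rho^{1/2}\leq c\rho'$ in Theorem \ref{T1.1} to $\rho'\geq 1$ here reflects exactly the additional derivative gained on $u_1,u_2$, which supplies the missing $L_\infty$-control by Sobolev embedding.
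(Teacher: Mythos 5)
Your overall architecture (lift the boundary data at $x=0$, reduce to homogeneous data, run weighted energy estimates at the $L_2$ and $H^1$ levels modelled on \eqref{1.5}--\eqref{1.6}, pass to the limit) is the paper's architecture, but two of your key steps do not hold as stated.

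First, the lifting. You posit an extension $M$ with $M\big|_{x=0}=\mu$ whose discrepancy $M_t+bM_x+M_{xxx}+M_{xyy}$ lies in $L_2(0,T;\widetilde H^{1,\rho(x)}_+)$, and you claim this follows from \eqref{1.11} together with an $x$-cut-off. This is the crux of the whole problem and it is not justified: \eqref{1.11} runs in the opposite direction (it estimates traces of solutions of the linear equation in terms of their initial data; it does not invert the trace map), and for a generic Sobolev extension of a trace $\mu\in\widetilde H^{2/3,2}(B_T)$ the terms $M_{xxx}$ and $M_t$ live only in negative-order spaces, nowhere near $L_2(0,T;\widetilde H^{1,\rho(x)}_+)$. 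The only way to make the discrepancy harmless is to take $M$ to be an \emph{exact} solution of the linear equation for $x>0$ with prescribed trace $\mu$ --- the boundary potential $J(t,x,y;\mu)$ of Definition~\ref{D2.1} --- multiplied by $\eta(2-x)$, so that the discrepancy is smooth and supported in $1\leq x\leq 2$. Constructing $J$ and proving its mapping properties (Lemmas~\ref{L2.5}--\ref{L2.10}, resting on the roots $r_0(\theta,l)$ of \eqref{2.26} and the Bona--Sun--Zhang inequality) is a substantial part of the paper that your proposal assumes away; note also that for $\mu\in\widetilde H^{2/3,2}$ one only obtains $\psi\in C([0,T];\widetilde H^1_+)\cap L_2(0,T;\widetilde H^2_+)$ (see \eqref{3.35}), not $C([0,T];\widetilde H^2_+)$. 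A related, smaller issue: in your Galerkin scheme in $y$, the $H^1$-level test function contains $(v^N)^2$, which is not in the span of $\psi_1,\dots,\psi_N$, so the exact cancellations behind \eqref{1.6} are destroyed by the projection; the paper avoids this by solving a truncated/regularized problem exactly via contraction (Lemmas~\ref{L3.1}, \ref{L3.2}) and passing to the limit in the truncation parameter.

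Second, uniqueness. Your Gronwall inequality does not follow from an $L_\infty$ bound on $u_1+u_2$. After the integrations by parts the nonlinear contribution reduces to a multiple of $\iint (u_1+u_2)_x w^2\rho\,dxdy$ plus a harmless term carrying $\rho'$; since $(u_1+u_2)_x$ is only in weighted $L_2$, the factor $\|u_1+u_2\|_{L_{\infty,+}}$ does not control it, and moving the derivative back produces $\iint (u_1+u_2)ww_x\rho\,dxdy$, where $w_x$ carries the weight $\rho$ while the dissipation only controls $\iint w_x^2\rho'\,dxdy$. For exponential weights $\rho\sim\rho'$ and your argument closes, but the hypothesis $\rho'\geq 1$ admits $\rho=(1+x)^{2\alpha}$, $\alpha\geq 1/2$, where $\rho/\rho'\to\infty$; there your estimate leaves a term of order $\|u_1+u_2\|_{L_{\infty,+}}\bigl(\iint w^2\rho\,dxdy\bigr)^{1/2}$ and Gronwall from zero data no longer yields $w\equiv 0$. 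The paper's Theorem~\ref{T4.2} closes the estimate differently: it splits $\rho=(\rho/\rho')^{1/2}(\rho\rho')^{1/2}$, uses $\rho'\geq c_0$ to bound $\|(u_1+u_2)_x(\rho/\rho')^{1/2}\|_{L_{2,+}}$ by the $X_w^{1,\rho(x)}$ norm, and controls $\|w^2(\rho\rho')^{1/2}\|_{L_{2,+}}$ by the weighted interpolation inequality \eqref{1.15}, which is then absorbed by the dissipation. This is precisely where $\rho'\geq 1$ is used, not through a Sobolev embedding into $L_\infty$.
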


\begin{remark}\label{R1.3}
According to \eqref{1.11} the assumptions on the boundary data $\mu$ are natural. The exponential weight $\rho(x)\equiv e^{2\alpha x}$, $\alpha>0$, satisfies both existence and uniqueness assumptions. The power weight $\rho(x)\equiv (1+x)^{2\alpha}$, $\alpha>0$, satisfies existence assumptions and for $\alpha\geq 1/2$ -- uniqueness assumptions. If $u_0\in \widetilde H_+^1$, $f\in L_1(0,T;\widetilde H_+^1)$ there exists a weak solution $u\in C_w([0,T];\widetilde H_+^1)$, $\lambda^+(|D^2u|;T)<+\infty$. Solutions, similar to the ones from Theorem~\ref{T1.2}, are constructed in \cite{L13} in the case of homogeneous Dirichlet boundary conditions and only for exponential weights (which are convenient, but, of course, restrictive). Moreover, for uniqueness results it is also assumed there, that weak solutions are limits of regular ones.
\end{remark}

\begin{theorem}\label{T1.3}
Let $u_0\in \widetilde H^{3,\rho(x)}_+$, $f\in L_1(0,T; \widetilde H_+^{(0,3),\rho(x)})\cap L_2(0,T;\widetilde H^{1,\rho(x)}_+)$, $f_t\in L_1(0,T; L_{2,+}^{\rho(x)})$ for certain $T>0$ and an admissible weight function $\rho(x)$, such that $\rho'(x)$ is also an admissible weight function and $\rho'(x)\geq 1 \ \forall x\geq 0$. Let $\mu\in \widetilde H^{4/3,4}(B_T)$, $\mu(0,y)\equiv u_0(0,y)$. Then there exists a unique solution to problem \eqref{1.1}--\eqref{1.4} $u\in X^{3,\rho(x)}(\Pi_T^+)$.
\end{theorem}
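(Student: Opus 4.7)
My plan is to bootstrap from the weak solutions of Theorems~\ref{T1.1} and \ref{T1.2}, which already apply here since the uniqueness clauses of both earlier theorems are met (as $\rho'(x)\geq 1$). The additional regularity will be obtained by estimating $u_t$ directly from the $t$-differentiated equation and then reading the spatial regularity out of the equation itself. To begin, I would lift the boundary data by constructing $M(t,x,y)$ with $M|_{x=0}=\mu$ and, using the compatibility condition $\mu(0,y)\equiv u_0(0,y)$ and a cutoff in $x$, with $M|_{t=0}\equiv 0$ on $\{x>0\}$; the lift is produced by expanding $\mu$ in the eigenfunctions $\psi_l(y)$ and appealing to the trace correspondence \eqref{1.11} at regularity $s=3$, whose natural trace space on $B_T$ is precisely $\widetilde H^{4/3,4}(B_T)$. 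Subtracting $M$ reduces the problem to the case $\mu\equiv 0$ with modified data still in the prescribed spaces. I would then regularize data (mollify $u_0$ in $\widetilde H_+^{3,\rho}$, mollify $f$, etc.) and build smooth approximate solutions $u_n$ by, say, a parabolic regularization or the Galerkin scheme in the basis $\psi_l$, keeping enough smoothness to integrate by parts rigorously in the estimates below.

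The core new estimate is on $v:=u_t$, which satisfies
\begin{equation*}
v_t+bv_x+v_{xxx}+v_{xyy}+(uv)_x=f_t,\quad v|_{x=0}=\mu_t,\quad v|_{t=0}=-bu_{0x}-u_{0xxx}-u_{0xyy}-u_0u_{0x}+f(0,\cdot,\cdot).
\end{equation*}
The initial datum lies in $L_{2,+}^{\rho(x)}$ because $u_0\in\widetilde H^{3,\rho}_+$, and $f(0,\cdot,\cdot)\in L_{2,+}^{\rho(x)}$ by combining $f\in L_1(0,T;\widetilde H^{(0,3),\rho}_+)$ with $f_t\in L_1(0,T;L_{2,+}^{\rho})$. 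Multiplying the equation for $v$ by $2\rho(x)v$ and integrating reproduces the analogue of \eqref{1.5} plus the nonlinear contribution $\iint\rho (uv)_x v\,dxdy=-\tfrac12\iint(\rho u)_x v^2\,dxdy$ and a boundary contribution from $\mu_t$ at $x=0$. The nonlinear term is absorbed using the already-available $u\in X_w^{1,\rho}$ with $\lambda^+(|D^2u|;T)<\infty$ from Theorem~\ref{T1.2}, which yields $u_x\in L_2(0,T;L_\infty(\Sigma_+))$ via anisotropic Sobolev embedding on strips and a Gronwall argument. The boundary term is controlled precisely by the $\widetilde H^{1/3,1}(B_T)$-regularity of $\mu_t$, i.e.\ by the hypothesis $\mu\in\widetilde H^{4/3,4}(B_T)$. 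This produces $v\in L_\infty(0,T;L_{2,+}^{\rho})\cap L_2(0,T;\widetilde H^{1,\rho'}_+)$.

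With $u_t$ controlled, the equation gives $u_{xxx}+u_{xyy}=f-u_t-bu_x-uu_x$, which lies in $L_\infty(0,T;L_{2,+}^{\rho})$. To extract the individual third-order derivatives I would differentiate the equation in $y$: the boundary conditions \eqref{1.4} were arranged so that each even power of $\partial_y$ preserves them, hence $\partial_y^2 u$ solves a problem in the same class, and the basic $L_2$- and $H^1$-estimates apply again, now drawing on the $\widetilde H^{(0,3)}$-component of the hypothesis on $f$ to absorb the $y$-derivatives of the source. Combined with the algebraic recovery of $u_{xxx}$ from $u_{xyy}$ and $u_t$, this gives $u\in L_\infty(0,T;\widetilde H^{3,\rho}_+)\cap L_2(0,T;\widetilde H^{4,\rho'}_+)$. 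Strong continuity in $t$ is upgraded from weak continuity by writing the squared weighted norms as absolutely continuous functions of $t$ using the energy identities. Uniqueness is immediate: $X^{3,\rho}\subset X_w^{1,\rho}$ and $\rho'\geq 1$, so Theorem~\ref{T1.2} applies. The hardest point will be the closed $u_t$-estimate: the simultaneous handling of the weighted nonlinear term and the $\mu_t$ boundary contribution is what ties together the three hypotheses on $\mu$, $f$, and $f_t$, and it is the reason the source-term assumption is split as it is, with $f_t$ asked only in $L_1(L_2^{\rho})$ while the $y$-regularity of $f$ is separately in $\widetilde H^{(0,3),\rho}$.
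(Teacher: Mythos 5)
Your overall route is the one the paper takes: lift $\mu$ by the eigenfunction boundary potential $J(t,x,y;\mu)$ cut off in $x$ (formulas \eqref{3.3}, \eqref{3.5}), reduce to zero data at $x=0$, obtain the key energy estimate on $u_t$ from the $t$-differentiated equation, and then read the spatial derivatives back out of the equation and its $\partial_y$-differentiated versions; uniqueness indeed comes for free from the $H^1$-level theory since $\rho'\geq 1$. The paper packages this as a contraction in $X^{3,\rho(x)}(\Pi_{t_0}^+)$ (Lemma~\ref{L3.3}, built on the linear Lemma~\ref{L2.17}) plus the global a priori bounds \eqref{3.52}--\eqref{3.56}, rather than your mollification-and-compactness scheme, but that is an organizational difference, not a mathematical one.

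There are, however, two concrete gaps. First, in the step you yourself single out as the hardest, your mechanism for absorbing the nonlinear term $\iint(\rho u)_x u_t^2\,dxdy$ is wrong: $\lambda^+(|D^2u|;T)<+\infty$ gives only $u\in L_2(0,T;H^2)$ locally in $x$, and in two dimensions $H^2\not\subset W^1_\infty$, so $u_x\in L_2(0,T;L_{\infty,+})$ does \emph{not} follow from Theorem~\ref{T1.2}. The estimate still closes, but by a different device: H\"older with exponents $2,4,4$ giving $\iint|u_x|\rho\,u_t^2\leq\|u_x(\rho/\rho')^{1/2}\|_{L_{2,+}}\|u_t(\rho'\rho)^{1/4}\|_{L_{4,+}}^2$, followed by the weighted interpolation inequality \eqref{1.15} with $q=4$ to trade $\|u_t(\rho'\rho)^{1/4}\|_{L_{4,+}}^2$ for $\varepsilon\iint|Du_t|^2\rho'+c(\varepsilon)\iint u_t^2\rho$; this uses only $u\in L_\infty(0,T;\widetilde H^{1,\rho}_+)$ and $\rho'\geq 1$ (this is exactly how \eqref{3.53} is closed in the paper). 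Relatedly, after you have already subtracted the lift, $u_t$ has \emph{zero} trace at $x=0$; carrying $v|_{x=0}=\mu_t$ into the energy identity would reintroduce uncontrolled traces of $v_{xx}$ at $x=0$, which is precisely what the lifting is designed to avoid. Second, the passage from ``$u_t$, $u_{yy}$, $u_{yyy}$ controlled and $u_{xxx}+u_{xyy}\in L_\infty(0,T;L_{2,+}^{\rho})$'' to the full statement $u\in C([0,T];\widetilde H^{3,\rho(x)}_+)$ is much more than ``algebraic recovery'': one must first get $u_{xx}$ via the $H^{(-1,0),\rho}$ trick of Lemma~\ref{L1.2} (writing $u_{xyy}=(u_{yy})_x$), and then recover the mixed derivatives $u_{xxy}$, $u_{xyy}$ uniformly in $t$ via an $H^2$ elliptic estimate for $\widetilde u_x\rho^{1/2}$ on a half-plane after reflection in $y$, which in turn needs the trace $u_x|_{x=0}$ bounded in $C([0,T];H^{3/2})$ through the interpolation of $u_{tx}|_{x=0}\in L_2(B_T)$ against $u_x|_{x=0}\in L_2(0,T;H^3)$. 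This occupies the second half of the proof of Lemma~\ref{L2.17} and cannot be dismissed in one clause; without it you only obtain $u_{xxx},u_{xxy}\in L_2$ in time rather than the $C([0,T];\widetilde H^{3,\rho(x)}_+)$ membership required by $X^{3,\rho(x)}(\Pi_T^+)$.
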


\begin{remark}\label{R1.4}
According to \eqref{1.11} the assumptions on the boundary data $\mu$ are natural. Both the exponential weight $\rho(x)\equiv e^{2\alpha x}$, $\alpha>0$ and the power weight $\rho(x)\equiv (1+x)^{2\alpha}$, $\alpha\geq 1/2$,  satisfy the hypothesis of the theorem. In \cite{LT} for construction of regular solutions only exponential weights are used and only homogeneous Dirichlet boundary conditions are considered. Moreover, what seems the most important, for the constructed regular solutions existence of $u_{yyy}$, lying in weighted $L_2$-spaces uniformly with respect to $t$, is not obtained there in comparison with Theorem~\ref{T1.3}. 
\end{remark}

Next, pass to the decay results. Here we always assume that $f\equiv 0$, $\mu\equiv 0$ and consider boundary conditions \eqref{1.4} only in the cases a) and c). Similarly to \cite{LT,L13} we use for these results only exponential weights.

\begin{theorem}\label{T1.4}
Let $L_0=+\infty$ if $b\leq 0$, and if $b>0$ there exists $L_0>0$, such that in both cases for any $L\in (0,L_0)$ there exist $\alpha_0>0$, $\epsilon_0>0$ and $\beta>0$, such that if $u_0\in L_{2,+}^{e^{2\alpha x}}$ for $\alpha\in (0,\alpha_0]$, $\|u_0\|_{L_{2,+}}\leq\epsilon_0$,  $f\equiv 0$, $\mu\equiv 0$,  in the cases a) and c) in \eqref{1.4} the corresponding unique weak solution $u(t,x,y$) to problem \eqref{1.1}--\eqref{1.4} from the space $X_w^{e^{2\alpha x}}(\Pi_T^+)$ $\forall T>0$ satisfies an inequality
\begin{equation}\label{1.13}
\|e^{\alpha x}u(t,\cdot,\cdot)\|^2_{L_{2,+}}\leq e^{-\alpha\beta t}\|e^{\alpha x}u_0\|_{L_{2,+}}^2\qquad \forall t\geq 0.
\end{equation}
If, in addition, $u_0\in \widetilde H_+^{1,e^{2\alpha x}}$, $u_0(0,y)\equiv 0$, then for certain constant $c$, depending on $\|u_0\|_{\widetilde H_+^{1,e^{2\alpha x}}}$,
\begin{equation}\label{1.14}
\|e^{\alpha x}u(t,\cdot,\cdot)\|_{H^1_+}^2
\leq ce^{-\alpha\beta t}\qquad \forall t\geq 0.
\end{equation}
\end{theorem}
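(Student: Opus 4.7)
The plan is to derive, for regular solutions of (1.1)--(1.4) with $f\equiv\mu\equiv 0$ in case (a) or (c), a differential inequality of the form $\frac{d}{dt}E(t)+\alpha\beta E(t)\le 0$ for $E(t):=\|e^{\alpha x}u(t,\cdot,\cdot)\|_{L_{2,+}}^2$; then (1.13) follows by Gronwall, and the bound is transferred to the weak solution of Theorem~1.1 by approximation. Multiplying (1.1) by $2ue^{2\alpha x}$, integrating over $\Sigma_+$, and using $u|_{x=0}=0$ (which yields $u_y|_{x=0}=0$) together with the $y$-boundary conditions in cases (a) and (c), a standard set of integrations by parts produces the identity
\begin{multline*}
\frac{d}{dt}E+\int_0^L u_x^2\big|_{x=0}\,dy+6\alpha\|u_xe^{\alpha x}\|_{L_{2,+}}^2+2\alpha\|u_ye^{\alpha x}\|_{L_{2,+}}^2\\
=2\alpha bE+8\alpha^3E+\tfrac{4\alpha}{3}\iint_{\Sigma_+} u^3 e^{2\alpha x}\,dxdy,
\end{multline*}
whose left-hand side is dissipative and whose right-hand side must be absorbed.

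The key ingredient is the Poincar\'e inequality in $y$: $\|u_ye^{\alpha x}\|_{L_{2,+}}^2\ge\kappa E$ with $\kappa=(\pi/L)^2$ in case (a) and $\kappa=(\pi/(2L))^2$ in case (c). Reserving a small portion of $2\alpha\|u_ye^{\alpha x}\|_{L_{2,+}}^2$ to absorb the cubic term and using the remainder via Poincar\'e, the linear part of the right-hand side gives a coefficient close to $2\alpha(\kappa-b-4\alpha^2)$ in front of $E$; this is strictly positive once $\kappa>b$ and $\alpha$ is small, producing $L_0=+\infty$ when $b\le 0$ and $L_0=\pi/\sqrt{b}$ or $\pi/(2\sqrt{b})$ according to the case when $b>0$. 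For the cubic contribution I would write $\iint u^3 e^{2\alpha x}=\iint u\cdot(ue^{\alpha x})^2\le\|u(t)\|_{L_{2,+}}\|ue^{\alpha x}\|_{L_4(\Sigma_+)}^2$ and apply the two-dimensional Ladyzhenskaya inequality $\|v\|_{L_4}^2\le C\|v\|_{L_2}\|\nabla v\|_{L_2}$ to $v=ue^{\alpha x}$; the basic conservation law (1.5) yields $\|u(t)\|_{L_{2,+}}\le\|u_0\|_{L_{2,+}}\le\epsilon_0$, so Young's inequality folds the bulk of the cubic into the reserved dissipation and leaves an $O(\epsilon_0)\alpha E$ remainder that is harmless for small $\epsilon_0$. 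Choosing $\alpha_0$ and $\epsilon_0$ accordingly closes the differential inequality with explicit $\beta>0$ and delivers (1.13).

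For (1.14) the strategy is analogous one derivative higher. I would control $F(t):=\|u_xe^{\alpha x}\|_{L_{2,+}}^2+\|u_ye^{\alpha x}\|_{L_{2,+}}^2$ via the multiplier $-(2u_{xx}+2u_{yy}+u^2)e^{2\alpha x}$, which at $\alpha=0$ recovers the conservation law (1.6); the compatibility $u_0(0,y)\equiv 0\equiv\mu$ ensures $u_t|_{x=0}=0$, so no new $x$-boundary contributions appear beyond the non-negative $\int_0^L(u_{xx}^2+bu_x^2)|_{x=0}\,dy$. The weight introduces additional bulk terms of the form $\alpha\iint u_tu_x\,e^{2\alpha x}$, which I would rewrite via the equation and further integrations by parts; this produces second-order dissipation $\alpha\iint(u_{xx}^2+u_{xy}^2)e^{2\alpha x}$ that, together with Poincar\'e in $y$, controls $F$, while cubic and quartic remainders are estimated by 2D Ladyzhenskaya and rendered small through the $L^2$-decay (1.13) already in hand. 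The outcome is $\frac{d}{dt}F+\alpha\beta F\le Ce^{-\alpha\beta t}$ with $C=C(\|u_0\|_{\widetilde H_+^{1,e^{2\alpha x}}})$, which integrates to (1.14). The main obstacle throughout is the sign balance $\kappa>b$, which forces $L<L_0$ when $b>0$ and smallness of $\alpha,\epsilon_0$; in the $H^1$ step the extra weighted correction terms $\alpha\iint u_tu_x e^{2\alpha x}$ must be tracked with care so as not to spoil the dissipation. All formal manipulations are carried out on the regular solutions of Theorem~1.3 and transferred to the weak solutions of Theorems~1.1--1.2 via the approximation scheme already employed there.
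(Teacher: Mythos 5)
Your proposal follows essentially the same route as the paper: the weighted $L_2$ identity combined with the Steklov inequality in $y$ and a weighted Ladyzhenskaya/Gagliardo--Nirenberg bound on the cubic term yields the differential inequality giving \eqref{1.13} (with smallness of $\alpha$ and $\|u_0\|_{L_{2,+}}$ and the restriction $L<L_0$ arising exactly as you describe), and the $H^1$ decay is obtained from the weighted analogue of \eqref{1.6} using the $L_2$ decay and the integrated dissipation already in hand. One minor caveat: in the $H^1$ step the $x=0$ boundary contribution is not just the term $\int_0^L(u_{xx}^2+bu_x^2)\big|_{x=0}\,dy$ (which moreover need not be non-negative when $b<0$) --- the weight produces additional trace terms such as $2u_{xx}u_x\rho'\big|_{x=0}$ and $-u_x^2\rho''\big|_{x=0}$ --- but all of these are absorbed by the exponentially weighted bound on $\int_0^t e^{\alpha\beta\tau}\int_0^L u_x^2\big|_{x=0}\,dy\,d\tau$ that your first energy inequality already supplies, exactly as in the paper.
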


Further, let $\eta(x)$ denotes a cut-off function, namely, $\eta$ is an infinitely smooth non-decreasing function on $\mathbb R$  such that $\eta(x)=0$ when $x\leq 0$, $\eta(x)=1$ when $x\geq 1$, $\eta(x)+\eta(1-x)\equiv 1$.

Let $\widetilde{\EuScript S}(\overline{B}_+)$ be the restriction of $\widetilde{\EuScript S}(\overline{B})$ on $\overline{B}_+=\overline{\mathbb R}_+^t\times [0,L]$.

We drop limits of integration in integrals over the whole half-strip $\Sigma_+$.

\medskip

The following interpolating inequality generalizing the one from \cite{LSU} for weighted Sobolev spaces is crucial for the study.

\begin{lemma}\label{L1.1}
Let $\rho_1(x)$, $\rho_2(x)$ be two admissible weight functions, such that $\rho_1(x)\leq c_0\rho_2(x)$ $\forall x\geq 0$ for some constant $c_0>0$. Then for any $q\in [2,+\infty)$ there exists a constant $c>0$ such that for every function $\varphi(x,y)$, satisfying $|D\varphi|\rho_1^{1/2}(x)\in L_{2,+}$, $\varphi\rho_2^{1/2}(x)\in L_{2,+}$, the following inequality holds:
\begin{equation}\label{1.15}
\bigl\| \varphi\rho_1^s(x)\rho_2^{1/2-s}(x)\bigr\|_{L_{q,+}} \leq c
\bigl\| |D\varphi|\rho_1^{1/2}(x)\bigr\|^{2s}_{L_{2,+}}
\bigl\| \varphi\rho_2^{1/2}(x)\bigr\|^{1-2s}_{L_{2,+}}
+c\bigl\|\varphi\rho_2^{1/2}(x)\bigr\|_{L_{2,+}},
\end{equation}
where $\displaystyle{s=\frac{1}{2}-\frac{1}{q}}$. If $\varphi\big|_{y=0}=0$ or $\varphi\big|_{y=L}=0$, then the constant $c$ in \eqref{1.15} is uniform with respect to $L$.
\end{lemma}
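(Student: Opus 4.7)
The plan is to reduce to the classical unweighted interpolation inequality of \cite{LSU} by partitioning $\Sigma_+$ into unit-length slabs and freezing the weights slab by slab. Set $\Sigma_n=(n,n+1)\times(0,L)$ for $n=0,1,2,\dots$, so $\Sigma_+=\bigcup_n\Sigma_n$; admissibility $|\rho_i'(x)|\leq c\rho_i(x)$ integrates to $e^{-c}\leq\rho_i(x)/\rho_i(n)\leq e^c$ for $x\in[n,n+1]$, so every positive power of $\rho_1,\rho_2$ is equivalent on $\Sigma_n$ to its value at $x=n$, with constants uniform in $n$.

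On the fixed-geometry rectangle $\Sigma_n$ apply the two-dimensional interpolation inequality from \cite{LSU},
\begin{equation*}
\|\varphi\|_{L_q(\Sigma_n)}\leq c\bigl(\||D\varphi|\|_{L_2(\Sigma_n)}^{2s}\|\varphi\|_{L_2(\Sigma_n)}^{1-2s}+\|\varphi\|_{L_2(\Sigma_n)}\bigr),\qquad s=\tfrac{1}{2}-\tfrac{1}{q}.
\end{equation*}
The constant is uniform in $n$ by translation invariance; it becomes uniform in $L$ under the one-sided hypothesis $\varphi\big|_{y=0}=0$ or $\varphi\big|_{y=L}=0$ by odd reflection in $y$, which extends $\varphi$ to $(n,n+1)\times\mathbb R$ and reduces the bound to the whole-plane Gagliardo--Nirenberg inequality. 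Multiply through by $\rho_1(n)^s\rho_2(n)^{1/2-s}=[\rho_1(n)^{1/2}]^{2s}[\rho_2(n)^{1/2}]^{1-2s}$, distribute this prefactor across the two factors of the main term, and use weight equivalence; for the remainder invoke the hypothesis $\rho_1\leq c_0\rho_2$ to get $\rho_1(n)^s\rho_2(n)^{1/2-s}\leq c_0^s\rho_2(n)^{1/2}$. The outcome is the local estimate
\begin{equation*}
\|\varphi\rho_1^s\rho_2^{1/2-s}\|_{L_q(\Sigma_n)}\leq c\||D\varphi|\rho_1^{1/2}\|_{L_2(\Sigma_n)}^{2s}\|\varphi\rho_2^{1/2}\|_{L_2(\Sigma_n)}^{1-2s}+c\|\varphi\rho_2^{1/2}\|_{L_2(\Sigma_n)}.
\end{equation*}

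To assemble, raise this to the $q$-th power, using the identities $2sq=q-2$ and $(1-2s)q=2$, and sum in $n\geq 0$. The left-hand side produces $\|\varphi\rho_1^s\rho_2^{1/2-s}\|_{L_q(\Sigma_+)}^q$. For the main sum I apply the elementary estimate $a_n^{q-2}\leq(\sum_m a_m^2)^{(q-2)/2}$, valid for nonnegative $a_m$ and $q\geq 2$, with $a_n=\||D\varphi|\rho_1^{1/2}\|_{L_2(\Sigma_n)}$, obtaining $\||D\varphi|\rho_1^{1/2}\|_{L_{2,+}}^{q-2}\|\varphi\rho_2^{1/2}\|_{L_{2,+}}^2$; for the remainder the embedding $\ell^2\hookrightarrow\ell^q$ ($q\geq 2$) gives $\sum_n\|\varphi\rho_2^{1/2}\|_{L_2(\Sigma_n)}^q\leq\|\varphi\rho_2^{1/2}\|_{L_{2,+}}^q$. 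Extracting the $q$-th root and using $(A+B)^{1/q}\leq A^{1/q}+B^{1/q}$ for $A,B\geq 0$ then yields \eqref{1.15}. I expect the main obstacle to be the base-case inequality on $\Sigma_n$ with constant uniform in $L$: this is available only under the one-sided boundary hypothesis through the odd-reflection reduction to the whole plane, since without a vanishing trace the slab admits constant modes whose Poincar\'e contribution forces a genuine $L$-dependence of the constant in \eqref{1.15}.
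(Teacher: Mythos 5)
Your argument is sound and, unlike the paper --- which simply cites \cite{F15-2} for the full strip $\Sigma$ and asserts that the half-strip case is identical --- it is self-contained. The slab decomposition with frozen weights is the natural mechanism behind weighted interpolation for admissible weights: $|\rho_i'|\le c\rho_i$ gives $e^{-c}\le\rho_i(x)/\rho_i(n)\le e^{c}$ on $[n,n+1]$ uniformly in $n$; the exponent bookkeeping $2sq=q-2$, $(1-2s)q=2$; the bound $a_n^{q-2}\le\bigl(\sum_m a_m^2\bigr)^{(q-2)/2}$; the embedding $\ell^2\hookrightarrow\ell^q$; the fact that $\rho_1\le c_0\rho_2$ is needed only for the additive remainder; and the final subadditivity of $t\mapsto t^{1/q}$ are all correct. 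This is almost certainly the same mechanism used in \cite{F15-2}, but within the present paper it constitutes an actual proof where the paper gives none.

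The one step that does not work as written is the justification of the $L$-uniform constant when $\varphi\big|_{y=0}=0$ or $\varphi\big|_{y=L}=0$. A single odd reflection across $y=0$ extends $\varphi$ only to $(n,n+1)\times(-L,L)$, not to $(n,n+1)\times\mathbb R$, and iterating reflections does not produce an $H^1(\,(n,n+1)\times\mathbb R\,)$ function; moreover, even a genuine extension in $y$ would leave you on a strip of unit width in $x$, so the ``whole-plane'' Gagliardo--Nirenberg inequality is not the right endpoint in any case. The claim itself is true, and the clean repair is Ladyzhenskaya's slicing argument performed directly on $\Sigma_n$: the vanishing trace gives $\varphi^2(x,y)\le 2\int_0^L|\varphi\varphi_y|\,dy'$, while in the $x$-direction $\varphi^2(x,y)\le\int_n^{n+1}\varphi^2\,dx'+2\int_n^{n+1}|\varphi\varphi_x|\,dx'$ (taking the base point where $\varphi^2(\cdot,y)$ is minimal on the unit interval); multiplying and integrating yields the $q=4$ case with absolute constants, and general $q$ follows by the same scheme. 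Your diagnosis that the constant mode is what forces $L$-dependence in the absence of a vanishing trace is exactly right, as the test function $\varphi\equiv 1$ shows.
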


\begin{proof}
For the whole strip $\Sigma=\mathbb R\times (0,L)$ this inequality was proved in \cite{F15-2}. For $\Sigma_+$ the proof is the same.
\end{proof}

\begin{lemma}\label{L1.2}
For an admissible weight function $\rho(x)$ introduce a functional space $H_+^{(-1,0),\rho(x)}= \{\varphi = \varphi_0 +\varphi_{1x}: \varphi_0, \varphi_1 \in L_{2,+}^{\rho(x)}\}$ endowed with the natural norm. Then for $j=1$ and $j=2$
\begin{equation}\label{1.16}
\|\partial_x^j \varphi\|_{L_{2,+}^{\rho(x)}} \leq c(\rho)\bigl( \|\varphi_{xxx}\|_{H_+^{(-1,0),\rho(x)}} +
\|\varphi\|_{H^{j-1,\rho(x)}_+}\bigr).
\end{equation}
\end{lemma}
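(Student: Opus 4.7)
The plan is to prove both inequalities by integration by parts on the weighted integrals $\iint\varphi_x^2\rho\,dxdy$ and $\iint\varphi_{xx}^2\rho\,dxdy$, exploiting the decomposition $\varphi_{xxx}=\varphi_0+\partial_x\varphi_1$ with $\varphi_0,\varphi_1\in L_{2,+}^{\rho(x)}$ coming from the $H^{(-1,0),\rho(x)}_+$ norm and the admissibility bound $|\rho'|\le c\rho$, and then to couple the two resulting estimates so that intermediate-derivative terms absorb.

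By density I first reduce to smooth functions with rapid decay in $x$, so boundary contributions at $x=+\infty$ vanish. For $j=2$, one IBP in $x$ on $\iint\varphi_{xx}^2\rho$ yields $-\iint\varphi_x\varphi_{xxx}\rho-\iint\varphi_x\varphi_{xx}\rho'$ plus a boundary term at $x=0$; substituting $\varphi_{xxx}=\varphi_0+\partial_x\varphi_1$ and integrating the $\partial_x\varphi_1$ piece by parts once more leaves only pairings of $\varphi_x,\varphi_{xx}$ against $\varphi_0,\varphi_1$ (and their $\rho'$-weighted analogues). Cauchy--Schwarz with weight $\rho$, the bound $|\rho'|\le c\rho$, and Young's inequality absorb $\|\varphi_{xx}\|_{L_{2,+}^{\rho(x)}}^2$ into the left-hand side, giving
\[
\|\varphi_{xx}\|_{L_{2,+}^{\rho(x)}}^2 \le C\bigl(\|\varphi_{xxx}\|_{H^{(-1,0),\rho(x)}_+}^2+\|\varphi_x\|_{L_{2,+}^{\rho(x)}}^2\bigr).
\]
For $j=1$, the analogous IBP on $\iint\varphi_x^2\rho$ produces $-\iint\varphi\varphi_{xx}\rho-\iint\varphi\varphi_x\rho'$, so Young yields $\|\varphi_x\|^2\le\epsilon\|\varphi_{xx}\|^2+C_\epsilon\|\varphi\|_{L_{2,+}^{\rho(x)}}^2$. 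Feeding this into the preceding bound with $\epsilon$ small enough absorbs $\|\varphi_x\|^2$ from the right and delivers both inequalities simultaneously.

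The main obstacle is the uncontrolled boundary term $\rho(0)\int_0^L\varphi_x(0,y)\varphi_{xx}(0,y)\,dy$ generated by the $j=2$ IBP. I would handle it using the cut-off $\eta$ defined earlier in the paper: write $\varphi=\eta(x)\varphi+\eta(1-x)\varphi$. The piece $\eta(x)\varphi$ vanishes to infinite order at $x=0$ since $\eta\equiv 0$ on $(-\infty,0]$, so the IBP argument applies with no boundary contribution; the commutator terms involving $\eta',\eta'',\eta'''$ are supported in $[0,1]\times(0,L)$ and are of strictly lower order, hence absorbable. The piece $\eta(1-x)\varphi$ is supported in the compact rectangle $[0,1]\times(0,L)$, where $\rho$ is bounded above and below, so the desired weighted estimate reduces to the standard unweighted interpolation inequality on a bounded $x$-interval, which is elementary (for instance by Fourier analysis after extension to $\mathbb R$). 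Summing the contributions from the two pieces closes the proof for both $j=1$ and $j=2$.
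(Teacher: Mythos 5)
The paper itself offers no argument here (the proof is declared ``obvious''), so there is nothing to compare against line by line; your IBP-plus-absorption strategy, with the splitting $\varphi=\eta(x)\varphi+\eta(1-x)\varphi$ to kill the boundary terms at $x=0$, is a sensible and essentially workable way to supply the missing details. Two points, however, are glossed over in a way that matters. First, the commutator term $3\eta'\varphi_{xx}$ arising from $\partial_x^3(\eta\varphi)$ is \emph{not} of strictly lower order: it contains the very quantity you are estimating. It is harmless only after you rewrite it as $\partial_x(\eta'\varphi_x)-\eta''\varphi_x$, which places it in $H_+^{(-1,0),\rho(x)}$ with norm controlled by $\|\varphi_x\|_{L_{2,+}^{\rho(x)}}$; for $j=2$ this sits inside $\|\varphi\|_{H^{1,\rho(x)}_+}$, while for $j=1$ it is a same-order term that must survive until your final $\varepsilon$-bootstrap absorbs it. This should be said explicitly, since otherwise the claim that the commutators are ``absorbable'' is circular.

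Second, and this is the genuine gap: for the piece $v=\eta(1-x)\varphi$ you invoke ``the standard unweighted interpolation inequality on a bounded $x$-interval \dots by Fourier analysis after extension to $\mathbb R$.'' The inequality you need is $\|v_{xx}\|_{L_2}\leq c\bigl(\|v_{xxx}\|_{H^{(-1,0)}}+\|v\|_{H^1}\bigr)$ for a $v$ that is \emph{not yet known to belong to} $H^2$, and the extension operator would have to be bounded simultaneously in $H^{(1,0)}$ and in the nonstandard norm $\|v_{xxx}\|_{H^{(-1,0)}}$; no off-the-shelf extension theorem gives this, and the usual reflection constructions are justified by density in spaces you do not control. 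The fix is elementary but different: since $v_{xxx}=g_0+g_{1x}$ with $g_0,g_1$ which may be taken supported in $\{x\leq 2\}$ (cut off the decomposition and absorb the commutator), one has $\partial_x(v_{xx}-g_1)=g_0$, and because $v_{xx}$ and $g_1$ vanish for $x\geq 2$, integrating from the right edge gives $v_{xx}(x,y)=g_1(x,y)-\int_x^2 g_0(x',y)\,dx'$, whence $\|v_{xx}\|_{L_2}\leq c(\|g_0\|_{L_2}+\|g_1\|_{L_2})$ directly, with no extension and no Fourier transform. With that substitution (and the corresponding one-dimensional remark for $j=1$), your argument closes; as written, the extension step is the one place a referee would stop you.
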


\begin{proof}
The proof is obvious.
\end{proof}

We also use the following obvious  interpolating inequalities:
\begin{equation}\label{1.17}
\int_0^L \varphi^2\big|_{x=0}\,dy \leq c \Bigl(\iint \varphi_x^2\rho'\,dxdy\Bigr)^{1/2}
\Bigl(\iint \varphi^2\rho\,dxdy\Bigr)^{1/2}+
c\iint \varphi^2\rho\,dxdy,
\end{equation}
where the constant $c$ depends on the properties of an admissible weight function $\rho$,
and
\begin{equation}\label{1.18}
\|\varphi\|_{L_{\infty,+}} \leq c \|\varphi\|_{H^2_+}.
\end{equation}

For the decay results, we need Steklov's inequalities in the following form: for $\psi\in H_0^1(0,L)$,
\begin{equation}\label{1.19}
\int_0^L \psi^2(y)\,dy \leq \frac{L^2}{\pi^2} \int_0^L \bigl(\psi'(y)\bigr)^2\,dy,
\end{equation}
for $\psi\in H^1(0,L)$, $\psi\big|_{y=0}=0$,
\begin{equation}\label{1.20}
\int_0^L \psi^2(y)\,dy \leq \frac{4L^2}{\pi^2} \int_0^L \bigl(\psi'(y)\bigr)^2\,dy.
\end{equation}

The paper is organized as follows. Auxiliary linear problems are considered in Section~\ref{S2}.  Section~\ref{S3} is devoted to the existence results for the original problems. Results on uniqueness and continuous dependence are proved in Section~\ref{S4}. Decay of solutions is studied in Section~\ref{S5}.

\section{Auxiliary linear problems}\label{S2}

Consider an initial-boundary value in $\Pi_T^+$ for a linear equation
\begin{equation}\label{2.1}
u_t+bu_x+u_{xxx}+u_{xyy}=f(t,x,y)
\end{equation}
with initial and boundary conditions \eqref{1.2}--\eqref{1.4}. Weak solutions to this problem are understood similarly to Definition~\ref{D1.1}, moreover, due to the absence of nonlinearity one can take solutions from more wide space $L_2(\Pi_T^+)$.

\begin{lemma}\label{L2.1}
A generalized solution to problem \eqref{2.1}, \eqref{1.2}--\eqref{1.4} is unique in the space $L_2(\Pi_T^+)$.
\end{lemma}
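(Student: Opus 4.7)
The argument is Holmgren duality. By linearity, if $u_1,u_2\in L_2(\Pi_T^+)$ are two generalized solutions with the same data, their difference $u=u_1-u_2\in L_2(\Pi_T^+)$ satisfies
\[
\iiint_{\Pi_T^+} u\bigl(\phi_t+b\phi_x+\phi_{xxx}+\phi_{xyy}\bigr)\,dxdydt=0
\]
for every admissible test function $\phi$ (as in Definition~\ref{D1.1}), the boundary and initial contributions dropping because the data coincide. It therefore suffices to show that for every $g$ in a dense subset of $L_2(\Pi_T^+)$ there exists an admissible $\phi$ solving the adjoint IBVP
\[
\phi_t+b\phi_x+\phi_{xxx}+\phi_{xyy}=g,\quad \phi\big|_{t=T}=0,\quad \phi\big|_{x=0}=\phi_x\big|_{x=0}=0,
\]
together with the $y$-boundary conditions of \eqref{1.4}; then $\iiint u\,g\,dxdydt=0$ on a dense set will force $u\equiv 0$.

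The adjoint problem is converted to a forward IBVP by reversing time: setting $\tilde\phi(\tau,x,y)=\phi(T-\tau,x,y)$ and $\tilde g(\tau,x,y)=g(T-\tau,x,y)$ gives
\[
\tilde\phi_\tau-b\tilde\phi_x-\tilde\phi_{xxx}-\tilde\phi_{xyy}=-\tilde g,\quad \tilde\phi\big|_{\tau=0}=0,\quad \tilde\phi\big|_{x=0}=\tilde\phi_x\big|_{x=0}=0,
\]
with the same $y$-boundary conditions. The sign of the third-order dispersion has flipped compared to \eqref{2.1}, balanced by prescribing two boundary conditions at $x=0$ instead of one---exactly the correct count for well-posedness of the reversed equation on $\mathbb R_+$. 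Expanding $\tilde\phi(\tau,x,y)=\sum_l c_l(\tau,x)\psi_l(y)$ in the $y$-eigenfunctions $\psi_l$ (introduced before \eqref{1.10}) automatically enforces the $y$-boundary conditions and decouples the system into a countable family of linear 1D half-line problems $\partial_\tau c_l+(\lambda_l-b)\partial_x c_l-\partial_x^3 c_l=-g_l$, each with $c_l|_{\tau=0}=0$ and $c_l|_{x=0}=\partial_x c_l|_{x=0}=0$. Each 1D problem is solvable by a standard parabolic regularization $+\varepsilon\partial_x^4 c_l$ with uniform-in-$\varepsilon$ estimates; the vanishing pair at $x=0$ makes every boundary contribution in the integration by parts disappear, yielding the basic bound $\tfrac12\tfrac{d}{d\tau}\|\tilde\phi\|_{L_{2,+}}^2\le\|\tilde g(\tau)\|_{L_{2,+}}\|\tilde\phi(\tau)\|_{L_{2,+}}$.

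The main obstacle is upgrading this $L_2$-level construction to the regularity required for admissibility (namely $\tilde\phi\in L_2(0,T;\widetilde H_+^2)$ with $\tilde\phi_\tau,\tilde\phi_{xxx},\tilde\phi_{xyy}\in L_2(\Pi_T^+)$), since the reversed equation lacks the $H^1$ smoothing gain enjoyed by \eqref{2.1}. The remedy is to restrict $g$ to the dense subclass of $C_0^\infty$-functions compactly supported in $(0,T)\times\Sigma_+$ (which is automatically compatible with every version of the $y$-boundary conditions in \eqref{1.4}): then differentiating the equation in $\tau$ preserves all initial and boundary data for $v=\tilde\phi_\tau$, and an iterated $L_2$ energy estimate on $v,v_\tau,\dots$ provides $\tilde\phi_\tau\in L_\infty(0,T;L_{2,+})$ with bounds controlled by derivatives of the smooth compactly supported $g$. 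The individual derivatives $\tilde\phi_{xxx}$ and $\tilde\phi_{xyy}$ are then recovered mode by mode from the 1D equations $\partial_x^3 c_l=\partial_\tau c_l+(\lambda_l-b)\partial_x c_l+g_l$, the fast decay of $g_l$ in $l$ (from the smoothness of $g$) guaranteeing convergence in $L_2$. The required integrability at $x\to+\infty$ is inherited from the compact $x$-support of $g$ combined with a routine weighted energy estimate with a bounded exponential factor $e^{-2\alpha x}$ for small $\alpha>0$. Reverting $\phi=\tilde\phi(T-\cdot,\cdot,\cdot)$ yields the desired test function and completes the Holmgren argument.
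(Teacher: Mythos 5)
Your argument is the same H\"olmgren duality the paper uses: solve the time-reversed adjoint problem with the two boundary conditions $\phi\big|_{x=0}=\phi_x\big|_{x=0}=0$, obtain $L_2$ bounds on $\tilde\phi$, $\tilde\phi_\tau$ and all $y$-derivatives from energy identities (the paper does this by a single Galerkin scheme in products $\varphi_j(x)\psi_l(y)$ with only $\varphi_j(0)=0$ imposed, so that $\phi_x|_{x=0}=0$ appears as a natural condition, rather than your mode-by-mode parabolic regularization, but this is immaterial), and then recover the $x$-derivatives from the equation itself. The one step you leave under-justified is getting $\tilde\phi_x$ in \emph{unweighted} $L_2$ before reading $\partial_x^3c_l$ off the 1D equation: a decreasing weight $e^{-2\alpha x}$ only yields $\tilde\phi_x\in L_2(e^{-2\alpha x}dx)$, hence $\phi_{xxx}$ only in that weighted space, which does not meet the admissibility requirement $\phi_{xxx}\in L_2(\Pi_T^+)$ needed to pair against a solution that is merely in $L_2(\Pi_T^+)$. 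The paper closes exactly this point with Lemma~\ref{L1.2}: since $\partial_x^3c_l=(\partial_\tau c_l+g_l)+\partial_x\bigl((\lambda_l-b)c_l\bigr)$ with both pieces in $L_2$, inequality \eqref{1.16} for $j=1$ and $\rho\equiv 1$ gives $\partial_x c_l\in L_2$ unweighted (with constants controlled by the rapid decay in $l$ you already invoke), after which $\partial_x^3c_l\in L_2$ follows; with that substitution your proof is complete.
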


\begin{proof}
The proof is implemented by the standard H\"olmgren's argument. Consider the adjoint problem in $\Pi_T^+$ for an equation
\begin{equation}\label{2.2}
u_t-bu_x-u_{xxx}-u_{xyy}=f(t,x,y)\in C_0^\infty(\Pi_T^+)
\end{equation}
with zero initial data \eqref{1.2}, boundary data \eqref{1.4} and boundary data on $B_T$
\begin{equation}\label{2.3}
u\big|_{x=0}= u_x\big|_{x=0}=0.
\end{equation}
Let $\{\varphi_j(x): j=1,2,\dots\}$ be a set of linearly independent functions complete in the space $\{\varphi \in H^3_+: \varphi(0)=0\}$. We use the Galerkin method and seek an approximate solution in the form $u_k(t,x,y)= \sum\limits_{j,l=1}^k c_{kjl}(t) \varphi_j(x)\psi_l(y)$ (remind that $\psi_l$ are the orthonormal in $L_2(0,L)$ eigenfunctions for the operator $(-\psi'')$ on the segment $[0,L]$ with corresponding boundary conditions) via conditions for $i,m=1,\dots,k$, $t\in [0,T]$
\begin{equation}\label{2.4}
\iint \bigl(u_{kt}\varphi_i(x)\psi_m(y) +u_k(b\varphi_i'\psi_m+\varphi'''_i\psi_m +\varphi'_i\psi''_m)\bigr)\,dxdy -
\iint f\varphi_i\psi_m\,dxdy=0,
\end{equation}
$c_{kjl}(0)=0$. Multiplying \eqref{2.4} by $2c_{kim}(t)$ and summing with respect to $i,m$, we find that
\begin{equation}\label{2.5}
\frac d{dt} \iint u_k^2\,dxdy +\int_0^L u_{kx}^2\big|_{x=0}\,dy = 2 \iint fu_k\,dxdy
\end{equation}
and, therefore,
\begin{equation}\label{2.6}
\|u_k\|_{L_\infty(0,T;L_{2,+})} \leq \|f\|_{L_1(0,T;L_{2,+})}.
\end{equation}
Next, putting in \eqref{2.4} $t=0$, multiplying by $c'_{kim}(0)$ and summing with respect to $i,m$, we derive that $u_{kt}(0)=0$. Then differentiating \eqref{2.4} with respect to $t$, multiplying by $2c_{kim}(t)$ and summing with respect to $i,m$, we find  similarly to \eqref{2.5}, \eqref{2.6} that
\begin{equation}\label{2.7}
\|u_{kt}\|_{L_\infty(0,T;L_{2,+})} \leq \|f_t\|_{L_1(0,T;L_{2,+})}.
\end{equation}
Finally, since $\psi_m^{(2n)}(y)=(-\lambda_m)^n\psi_m(y)$ it follows from \eqref{2.4} that
\begin{multline}\label{2.8}
\iint \bigl(\partial_y^{n} u_{kt}\varphi_i\psi_m^{(n)} +\partial_y^{n} u_k(b\varphi'_m\psi_m^{(n)}+ \varphi'''_i\psi_m^{(n)}+\varphi'_i\psi_m^{(n+2)})\bigr)\,dxdy \\- 
\iint \partial_y^{n}f\varphi_i\psi_m^{(n)}\,dxdy=0,
\end{multline}
which similarly to \eqref{2.5}, \eqref{2.6} yields that for any $n$
\begin{equation}\label{2.9}
\|\partial_y^n u_k\|_{L_\infty(0,T;L_{2,+})} \leq \|\partial_y^n f\|_{L_1(0,T;L_{2,+})}.
\end{equation}
Estimates \eqref{2.6}, \eqref{2.7}, \eqref{2.9} provide existence of a weak solution $u(t,x,y)$ to the considered problem such that $u, u_t, \partial_y^n u \in L_\infty(0,T;L_{2,+})\ \forall n$ in the following sense: for any function $\phi\in L_2(0,T;\widetilde H_+^2)$, such that $\phi_t, \phi_{xxx}, \phi_{xyy}\in L_2(\Pi_T^+)$, $\phi\big|_{t=T}=0$, $\phi\big|_{x=0}=0$, the following equality holds:
\begin{equation}\label{2.10}
\iiint_{\Pi_T^+}\Bigl[u(\phi_t-b\phi_x-\phi_{xxx}-\phi_{xyy}) +f\phi\Bigr]\,dxdydt =0.
\end{equation}
Note, that the traces of the function $u$ satisfy conditions \eqref{1.2} for $u_0\equiv 0$ and \eqref{1.4}. Moreover, it follows from \eqref{2.10} that $\partial_y^n u_{xxx} \in L_\infty(0,T;H^{(-1,0),1}_+)\ \forall n$, therefore, inequality \eqref{1.16} for $j=1$ yields that $\partial_y^n u_x \in L_\infty(0,T;L_{2,+})\ \forall n$ and one more application of \eqref{2.10} yields that $\partial_y^n u_{xxx} \in L_\infty(0,T;L_{2,+})\ \forall n$, the function $u$ satisfies equation \eqref{2.2} a.e. in $\Pi_T^+$ and its traces satisfy \eqref{2.3}. 

The end of the proof of the lemma is standard.
\end{proof}

With the use of Galerkin method we prove one result on solubility of the considered problem in an infinitely smooth case.

\begin{lemma}\label{L2.2}             
Let $u_0\equiv 0$, $\mu\in C_0^\infty(B_+)$, $f\equiv 0$. Then there exists a solution $u(t,x,y)$ to problem \eqref{2.1}, \eqref{1.2}--\eqref{1.4}, such that $\partial^j_t\partial ^\alpha u \in C_b(\overline{\mathbb R}_+^t; L_{2,+})$ for any $j$ and multi-index $\alpha$ (here and further index 'b" means a bounded map).
\end{lemma}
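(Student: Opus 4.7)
The plan is to reduce to homogeneous boundary data at $x=0$ and then apply the Galerkin scheme from the proof of Lemma~\ref{L2.1} to build a fully smooth solution. Since $\mu\in C_0^\infty(B_+)$ is compactly supported in $(0,+\infty)\times(0,L)$, it automatically vanishes near $t=0$ and near $y=0,L$, so it satisfies each of the four types of $y$-boundary conditions in \eqref{1.4} together with $\mu(0,y)\equiv 0$. Pick $\chi\in C_0^\infty(\overline{\mathbb{R}}_+)$ with $\chi(0)=1$ and set $M(t,x,y)=\chi(x)\mu(t,y)$; then $v:=u-M$ must solve $v_t+bv_x+v_{xxx}+v_{xyy}=g$ with $g:=-M_t-bM_x-M_{xxx}-M_{xyy}$, under $v|_{t=0}=0$, $v|_{x=0}=0$ and the same $y$-boundary conditions as $u$, and both $M$ and $g$ are smooth with compact support in $\overline{\mathbb{R}}_+^t\times\overline{\Sigma}_+$.

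To construct $v$ I employ the Galerkin ansatz $v_k(t,x,y)=\sum_{j,l=1}^k c_{kjl}(t)\varphi_j(x)\psi_l(y)$, where $\{\psi_l\}$ is the system from the proof of Lemma~\ref{L2.1} and $\{\varphi_j\}$ is a complete linearly independent system in $\{\varphi\in H_+^3:\varphi(0)=0\}$, with $c_{kjl}(0)=0$ and with the forward-time analogue of \eqref{2.4} (signs reversed on the $b$, $xxx$ and $xyy$ terms, $g$ in place of $f$). Multiplying by $2c_{kim}(t)$ and summing over $i,m$, the boundary terms produced by integration by parts cancel thanks to $v_k|_{x=0}=0$ and the $y$-boundary conditions, giving
\begin{equation*}
\frac{d}{dt}\iint v_k^2\,dxdy+\int_0^L v_{kx}^2\big|_{x=0}\,dy=2\iint g v_k\,dxdy,
\end{equation*}
and hence $\|v_k\|_{L_\infty(\overline{\mathbb{R}}_+;L_{2,+})}\le\|g\|_{L_1(\mathbb{R}_+;L_{2,+})}$ uniformly in $k$.

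Higher-order estimates proceed exactly as in Lemma~\ref{L2.1}: evaluating the Galerkin equations at $t=0$ inductively yields $\partial_t^j v_k|_{t=0}\equiv 0$ for every $j$, since $g$ and all its $t$-derivatives vanish near $t=0$; differentiating the ODEs $j$ times in $t$ and repeating the energy identity produces $\|\partial_t^j v_k\|_{L_\infty(\overline{\mathbb{R}}_+;L_{2,+})}\le\|\partial_t^j g\|_{L_1(\mathbb{R}_+;L_{2,+})}$. The passage from \eqref{2.4} to \eqref{2.8}, based on $\psi_m^{(2n)}=(-\lambda_m)^n\psi_m$, gives the analogous bound on $\partial_t^j\partial_y^n v_k$ for all $j,n$; then rewriting the equation as $v_{xxx}=g-v_t-bv_x-v_{xyy}$ and applying Lemma~\ref{L1.2} with $\rho\equiv 1$ iteratively upgrades these to uniform bounds on $\partial_t^j\partial^\alpha v_k$ for arbitrary $j$ and multi-index $\alpha$.

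Passing to a weakly-$*$ convergent subsequence yields a limit $v$ with $\partial_t^j\partial^\alpha v\in L_\infty(\overline{\mathbb{R}}_+;L_{2,+})$ satisfying \eqref{2.1} and the boundary conditions in the strong sense; continuity in $t$ with values in $L_{2,+}$ follows from the bound on $v_t$, and $u:=v+M$ inherits all of this regularity from $M\in C_0^\infty$. Uniqueness is automatic from Lemma~\ref{L2.1}. The main bookkeeping obstacle is the last step above: each fresh $x$-derivative supplied by Lemma~\ref{L1.2} requires $v_t$ and $v_{xyy}$ to be already controlled with one fewer $x$-derivative, so the induction must run first over $j$ and $n$ and only then over the total $x$-order, with the compactness of the $t$-support of $g$ ensuring that every right-hand side produced this way lies in $L_1(\mathbb{R}_+;L_{2,+})$ and the resulting estimates are uniform on all of $\overline{\mathbb{R}}_+^t$.
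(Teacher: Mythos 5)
Your proof is essentially the paper's: the same subtraction of a lifting $\chi(x)\mu(t,y)$ of the boundary data (the paper takes $\chi(x)=\eta(1-x)$), the same Galerkin ansatz in the basis $\varphi_j(x)\psi_l(y)$ with the energy identity \eqref{2.12}, and the same bootstrap to higher regularity ($t$-derivatives of the ODE system, $y$-derivatives via $\psi_m^{(2n)}=(-\lambda_m)^n\psi_m$, then $x$-derivatives from the equation combined with Lemma~\ref{L1.2}), with boundedness on all of $\overline{\mathbb R}_+^t$ coming from the compact $t$-support of the source. One caution: the Galerkin equations must be taken in the form \eqref{2.11}, with the spatial derivatives left on $v_k$, and not as the literal sign-reversal of \eqref{2.4} that your parenthetical suggests --- the two formulations differ by the boundary term $\int_0^L v_{kx}\varphi_i'\psi_m\big|_{x=0}\,dy$ (nonzero, since only $\varphi_i(0)=0$ is imposed on the basis), and the sign-reversed weak form would yield $-\int_0^L v_{kx}^2\big|_{x=0}\,dy$ in the energy balance rather than the dissipative $+$ sign that you correctly wrote and that the estimate requires.
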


\begin{proof}
Let $v(t,x,y)\equiv u(t,x,y) -\mu(t,y)\eta(1-x)$, then the original problem is equivalent for the problem of \eqref{2.1}, \eqref{1.2}--\eqref{1.4} for the function $v$ with homogeneous initial-boundary conditions and $f\equiv -\mu_t\eta(1-x)-b\mu\eta'(1-x) -\mu\eta'''(1-x) -\mu_{yy}\eta'(1-x)$. 

Seek an approximate solution in the form $v_k(t,x,y)= \sum\limits_{j,l=1}^k c_{kjl}(t) \varphi_j(x)\psi_l(y)$ (the functions $\varphi_j$ are the same as in the proof of Lemma~\ref{L2.1}) via conditions for $i,m=1,\dots,k$, $t\in [0,T]$
\begin{equation}\label{2.11}
\iint (v_{kt}+bv_{kx}+v_{kxxx}+v_{kxyy})\varphi_i(x)\psi_m(y)\,dxdy  -
\iint f\varphi_i\psi_m\,dxdy=0, \quad c_{kjl}(0)=0.
\end{equation}
Multiplying \eqref{2.11} by $2c_{kim}(t)$ and summing with respect to $i,m$, we find that
\begin{equation}\label{2.12}
\frac d{dt} \iint v_k^2\,dxdy +\int_0^L v_{kx}^2\big|_{x=0}\,dy = 
2\iint f v_k\,dxdy.
\end{equation}
Note that $\|v_k(t,\cdot,\cdot)\|_{L_{2,+}}$ doesn't increase if $t\geq T$ for certain $T$. Then the consequent argument from the proof of Lemma~\ref{L2.1} can be applied here (\eqref{2.10} must be substituted by the corresponding analogue of \eqref{1.12}). Thus, first existence of a solution $v$ such that $\partial^j_t\partial^n_y v \in C_b(\overline{\mathbb R}_+^t; L_{2,+})$ for all $j$ and $n$ is obtained; then with the use of induction with respect to $m$ one can find that $\partial^j_t\partial^n_y \partial_x^{3m} v \in C_b(\overline{\mathbb R}_+^t; L_{2,+})$.
\end{proof}

Before the continuation of the study of the problems in the half-strip consider the corresponding problems in the whole strip.

For $u_0\in \widetilde{\EuScript S}(\overline{\Sigma})$ define similarly to \eqref{1.10} for $\xi\in\mathbb R$ and $l$ 
\begin{equation}\label{2.13}
\widehat u_0(\xi,l) \equiv \iint_{\Sigma} e^{-i\xi x}\psi_l(y) u_0(x,y)\,dxdy,
\end{equation}
\begin{equation}\label{2.14}
S(t,x,y;u_0) \equiv \sum_{l=1}^{+\infty} \frac{1}{2\pi}
\int_{\mathbb R}\, e^{it(\xi^3-b\xi+\lambda_l\xi)} e^{i\xi x}\widehat{u}_0(\xi,l)\,d\xi \psi_l(y).
\end{equation}
It is easy to see that for all $s\in\mathbb R$ the function $S(t,x,y;u_0)\in C_b(\mathbb R^t;\widetilde H^s)$ and for any $t\in \mathbb R$
\begin{equation}\label{2.15}
\|S(t,\cdot,\cdot;u_0)\|_{\widetilde H^s} =\|u_0\|_{\widetilde H^s}.
\end{equation}
This property gives an opportunity to extend the notion of the function $S(t,x,y;u_0)$ to any function $u_0\in \widetilde H^s$ for any $s\in\mathbb R$ via closure in the space $C_b(\mathbb R^t;\widetilde H^s)$, then, of course, equality \eqref{2.15} holds.

Let $\varphi_l(\xi) \equiv \xi^3-b\xi+\lambda_l\xi$. This function increases monotonically if $\lambda_l\geq b$  on the whole real line and for $\xi<-\sqrt{(b-\lambda_l)/3}$ and $\xi>\sqrt{(b-\lambda_l)/3}$ if $\lambda_l<b$. Let 
$\kappa_l(\theta)\equiv \varphi^{-1}_l(\theta)$, which is defined for all $\theta$ if $\lambda_l\geq b$ and for $|\theta| \geq 2((b-\lambda_l)/3)^{3/2}$ if $\lambda_l<b$ (then 
$|\kappa_l(\theta)|\geq 2\sqrt{(b-\lambda_l)/3}$).

\begin{lemma}\label{L2.3}
If $u_0\in \widetilde H^s$ for certain $s\in\mathbb R$, then $S(t,x,y;u_0)\in C_b(\mathbb R^x;  \widetilde H^{(s+1)/3,s+1}((-T,T)\times (0,L))$ and for any $x\in\mathbb R$
\begin{equation}\label{2.16}
\|S(\cdot,x,\cdot;u_0)\|_{\widetilde H^{(s+1)/3,s+1}((-T,T)\times (0,L))} \leq c(T) 
\|u_0\|_{\widetilde H^s}.
\end{equation}
\end{lemma}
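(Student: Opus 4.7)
My plan is to combine the eigenfunction expansion in $y$ with the fact that the phase $\varphi_l(\xi)=\xi^3+(\lambda_l-b)\xi$ is a $C^\infty$-diffeomorphism of $\mathbb R$ for all but finitely many indices $l$. By density it suffices to take $u_0\in\widetilde{\EuScript S}(\overline\Sigma)$; in that case one writes $S(t,x,y;u_0)=\sum_{l\ge 1}g_l(t,x)\psi_l(y)$ with
$$g_l(t,x)=\frac{1}{2\pi}\int_{\mathbb R}e^{it\varphi_l(\xi)+i\xi x}\widehat u_0(\xi,l)\,d\xi,$$
and Parseval in $y$ together with the definition of the target norm reduces \eqref{2.16} to an estimate of the form
$$\sum_{l\ge 1}\int_{\mathbb R}(|\theta|^{2/3}+l^2)^{s+1}\bigl|\widehat{G_l}(\theta,x)\bigr|^2\,d\theta\le c(T)\|u_0\|_{\widetilde H^s}^2,$$
where $G_l(t,x)=\eta_T(t)g_l(t,x)$ for a fixed cut-off $\eta_T\in C_0^\infty(\mathbb R)$ with $\eta_T\equiv 1$ on $[-T,T]$. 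This cut-off is responsible for the constant $c(T)$, and the restriction $\|\cdot\|_{H^{(s+1)/3}((-T,T))}\le\|\cdot\|_{H^{(s+1)/3}(\mathbb R)}$ lets me work on all of $\mathbb R^t$.

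For the tail indices $l>l_0$ with $\lambda_l>b$ the map $\xi\mapsto\varphi_l(\xi)$ is a global diffeomorphism, and the change of variable $\theta=\varphi_l(\xi)$ yields the explicit formula
$$\widehat{g_l}(\theta,x)=\frac{e^{i\kappa_l(\theta)x}\widehat u_0(\kappa_l(\theta),l)}{\varphi'_l(\kappa_l(\theta))}.$$
Inserting this into the weighted $L_2$-integral and changing variables back to $\xi$, control of the tail sum reduces to the pointwise symbol inequality
$$\frac{(|\varphi_l(\xi)|^{2/3}+l^2)^{s+1}}{3\xi^2+\lambda_l-b}\le C(\xi^2+l^2)^s.$$
Using $\lambda_l\sim l^2$ and splitting $|\xi|\lesssim l$ versus $|\xi|\gtrsim l$ (so that $\varphi_l(\xi)\sim l^2\xi$ or $\varphi_l(\xi)\sim\xi^3$, respectively), both regimes give $(\xi^2+l^2)^s$ with a constant independent of $l$, and summing over $l>l_0$ recovers exactly $\|u_0\|_{\widetilde H^s}^2$, with no $T$-dependence.

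The finitely many head indices $l\le l_0$ are the technically delicate part, because $\varphi_l$ has critical points (or a degenerate one at $\xi=0$ when $\lambda_l=b$) and $1/\varphi'_l$ is no longer integrable, so the clean change of variable breaks down. For these I work directly with the cut-off $G_l$, whose Fourier transform has the convolution representation
$$\widehat{G_l}(\theta,x)=\frac{1}{2\pi}\int_{\mathbb R}\widehat{\eta_T}(\theta-\varphi_l(\xi))\,e^{i\xi x}\widehat u_0(\xi,l)\,d\xi,$$
and I use the Schwartz decay of $\widehat{\eta_T}$ together with Cauchy--Schwarz in $\xi$ to absorb the critical-point degeneracy; since only finitely many bounded $l$ contribute, this gives a bound of the form $c(T)\|\widehat u_0(\cdot,l)\|_{H^s(\mathbb R)}$ for each, whose finite sum is controlled by $c(T)\|u_0\|_{\widetilde H^s}^2$. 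Finally, the continuity claim $S(\cdot,x,\cdot;u_0)\in C_b(\mathbb R^x;\widetilde H^{(s+1)/3,s+1}((-T,T)\times(0,L)))$ follows from dominated convergence applied to these Fourier representations, as $x$ enters only through the unit-modulus factors $e^{i\kappa_l(\theta)x}$ and $e^{i\xi x}$ while the bounds just obtained are uniform in $x$. The main obstacle is the precise treatment of the finite head of modes with critical points; once that is settled, everything else is a clean change of variable plus a harmless finite sum.
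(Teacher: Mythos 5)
Your proposal is correct and follows the same core strategy as the paper: expand in the eigenfunctions $\psi_l$, change variables $\theta=\varphi_l(\xi)$ where the phase is monotone, and reduce the tail to exactly the symbol inequality you state (which, using $\varphi'_l(\xi)=3\xi^2-b+\lambda_l\geq c(\xi^2+l^2)$ and $|\varphi_l(\xi)|^{2/3}+l^2\sim\xi^2+l^2$ on that region, holds for all $s$ and gives a $T$-independent bound over all of $\mathbb R^t$). The one genuine difference is how the degenerate modes are carved out. You put the \emph{entire} $\xi$-line of the finitely many indices $l\leq l_0$ into the head and then must recover the $1/3$-derivative gain for large $|\xi|$ from the convolution with $\widehat{\eta_T}$ --- which is exactly the part you flag as delicate. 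The paper instead cuts in both variables: it chooses $\xi_0$ so that $\varphi'_l(\xi)\geq c(\xi^2+l^2)$ holds for \emph{all} $l$ once $|\xi|\geq\xi_0$, assigns the region $\{l\leq l_0,\ |\xi|\leq\xi_0+1\}$ to $u_{00}$ and everything else to $u_{01}$. Then the change of variables handles $u_{01}$ in one stroke (including the large-$\xi$ part of the small-$l$ modes), while $S(t,x,y;u_{00})$ is a finite sum of modes with compactly supported $\xi$-spectrum, hence smooth and bounded in $L_2$ together with all $t$- and $y$-derivatives uniformly in $t$; the restriction to $(-T,T)$ trivially produces the $c(T)$. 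This finer decomposition makes your ``main obstacle'' disappear: on the residual bad set $\theta=\varphi_l(\xi)$ is bounded, so the weight $(|\theta|^{2/3}+l^2)^{s+1}$ is harmless and no stationary-phase or convolution analysis is needed. If you keep your decomposition, you should at least split off the $|\xi|\geq\xi_0$ portion of the small-$l$ modes and treat it by the same change of variables as the tail; otherwise the Cauchy--Schwarz argument has to reproduce the smoothing estimate by hand, which is doable but considerably longer than what your sketch suggests.
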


\begin{proof}
Without loss of generality assume that $u_0\in \widetilde{\EuScript S}(\overline{\Sigma})$.
There exists $l_0$ such that for $l>l_0$ and all $\xi$ and there exists $\xi_0\geq 1$ such that for $|\xi|\geq \xi_0$ and all $l$
\begin{equation}\label{2.17}
\varphi'_l(\xi) =3\xi^2-b+\lambda_l \geq c(\xi^2+l^2).
\end{equation}
Divide $u_0$ into two parts:
\begin{equation}\label{2.18}
u_{00}(x,y)\equiv \sum\limits_{l=1}^{l_0} \EuScript F^{-1}_x\bigr[ \widehat u_0(\xi,l)\eta(\xi_0+1-|\xi|)\bigr](x)\psi_l(y),\quad
u_{01}(x,y)\equiv u_0(x,y)-u_{00}(x,y).
\end{equation}
After the change of variables in the corresponding analog of the integral in \eqref{2.14} $\theta=\varphi_l(\xi)$ (without loss of generality one can assume also that $\varphi^{-1}_l(\theta)=\kappa_l(\theta)$) we derive that for the obviously defined function $\chi(\xi,l)$ (in particular, $\chi(\xi,l)=0$ for $l\leq l_0$, $|\xi|\leq \xi_0$)
\begin{equation}\label{2.19}
S(t,x,y;u_{01})= \sum_{l=1}^{+\infty} \EuScript F^{-1}_t\Bigr[ e^{i\kappa_l(\theta) x}
\widehat u_0(\kappa_l(\theta),l)\kappa'_l(\theta)\chi(\kappa_l(\theta),l)\Bigr](t) \psi_l(y) 
\end{equation}
and uniformly with respect to $x$ 
\begin{multline}\label{2.20}
\|S(\cdot,x,\cdot;u_{01})\|_{H^{(s+1)/3,s+1}(B)}  \\ = 
\Bigl(\sum\limits_{l=1}^{+\infty} 
\bigl\| (|\theta|^{2/3}+l^2)^{(s+1)/2}\widehat u_0(\kappa_l(\theta),l) \kappa'_l(\theta)\chi(\kappa_l(\theta),l)
\bigr\|_{L_2(\mathbb R^\theta)}^2\Bigr)^{1/2} \\ 
=\Bigl(\sum\limits_{l=1}^{+\infty} 
\bigl\| (|\varphi_l(\xi)|^{2/3}+l^2)^{(s+1)/2}\widehat u_0(\xi,l)  (\varphi'_l(\xi))^{-1/2}\chi(\xi,l)
\bigr\|_{L_2(\mathbb R^\xi)}^2\Bigr)^{1/2}  \leq c
\|u_0\|_{\widetilde H^s}^2.
\end{multline}
Finally note that
$$
S(t,x,y;u_{00})= \sum\limits_{l=1}^{l_0} \EuScript F^{-1}_x\bigr[ e^{it\varphi_l(\xi)} \widehat u_0(\xi,l)\eta(\xi_0+1-|\xi|)\bigr](x)\psi_l(y)
$$
 and one can easily show that for any $j$ and $n$ uniformly with respect to $t\in\mathbb R$
$$
\|\partial_t^j\partial_y^n S(t,\cdot,\cdot;u_{00})\|_{L_2} \leq 
c(s,j,n)\|u_0\|_{\widetilde H^s}.
$$
\end{proof}

Next, introduce the notation
\begin{equation}\label{2.21}
K(t,x,y;f) \equiv \int_0^t S(t-\tau,x,y;f(\tau,\cdot,\cdot))\,d\tau.
\end{equation}
Obviously, if $f\in L_1(0,T;\widetilde H^s)$ for certain $s\in\mathbb R$ then $K(t,x,y;f) \in C([0,T];\widetilde H^s)$ and
\begin{equation}\label{2.22} 
\|K(\cdot,\cdot,\cdot;f)\|_{C([0,T];\widetilde H^s)} \leq \|f\|_{L_1(0,T;\widetilde H^s)}.
\end{equation}

\begin{lemma}\label{L2.4}
If $s\in [-1,2]$, $f\in L_2(0,T;\widetilde H^s)$, then the function $K(t,x,y;f)\in C_b(\mathbb R^x;  \widetilde H^{(s+1)/3,s+1}(B_T))$ and for any $x\in\mathbb R$, $t_0\in (0,T]$
\begin{equation}\label{2.23}
\|K(\cdot,x,\cdot;f)\|_{\widetilde H^{(s+1)/3,s+1}(B_{t_0})} \leq c(T) t_0^{1/3-s/6}
\|f\|_{L_2(0,t_0;\widetilde H^s)}.
\end{equation}
\end{lemma}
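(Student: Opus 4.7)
My plan is to establish the estimate at the two endpoints $s=-1$ and $s=2$ and then recover the intermediate cases by complex interpolation in $s$. At $s_\theta = -(1-\theta)+2\theta = 3\theta-1$, the interpolated time power is $\tfrac12(1-\theta) + 0\cdot\theta = \tfrac{2-s_\theta}{6} = \tfrac13-\tfrac{s_\theta}{6}$, which matches the claim exactly. By density I may assume $f\in\widetilde{\EuScript S}(\overline{\Pi_T^+})$ and rely on uniformity in $x$ to conclude.

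For $s=-1$, since $\widetilde H^{0,0}(B_{t_0}) = L_2(B_{t_0})$, I would simply apply Minkowski's integral inequality to the Duhamel representation \eqref{2.21} and invoke Lemma~\ref{L2.3}:
\begin{equation*}
\|K(\cdot,x,\cdot;f)\|_{L_2(B_{t_0})} \leq \int_0^{t_0}\|S(\cdot-\tau,x,\cdot;f(\tau))\|_{L_2(B_{t_0})}\,d\tau \leq c(T)\int_0^{t_0}\|f(\tau)\|_{\widetilde H^{-1}}\,d\tau,
\end{equation*}
and then Cauchy--Schwarz in $\tau$ produces the $t_0^{1/2}$ factor together with $\|f\|_{L_2(0,t_0;\widetilde H^{-1})}$; uniformity in $x$ is inherited from Lemma~\ref{L2.3}.

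The endpoint $s=2$ is the principal obstacle, since Minkowski always delivers $t_0^{1/2}$ whereas the claim demands $t_0^0$. Here I would adapt the Fourier-analytic scheme behind Lemma~\ref{L2.3}: expand in $\{\psi_l\}$, Fourier transform in $x$, and split each mode into low- and high-frequency parts in $\xi$ and $l$ exactly as in \eqref{2.18}. The low-frequency piece carries super-regularity and is handled by direct bounds. On the high-frequency piece the symbol $\varphi_l$ is monotone with $\varphi_l'(\xi)\geq c(\xi^2+l^2)$ by \eqref{2.17}, so the change of variables $\theta=\varphi_l(\xi)$ converts the Duhamel integral into a temporal convolution whose Fourier multiplier in $t$ is controlled by $(|\theta|^{2/3}+l^2)^{-(s+1)/2}$; Plancherel in $t$ combined with $|\varphi_l(\xi)|^{2/3}+l^2 \sim \xi^2+l^2$ on that region should yield
\begin{equation*}
\|K(\cdot,x,\cdot;f)\|_{\widetilde H^{1,3}(B_T)} \leq c(T)\|f\|_{L_2(0,T;\widetilde H^2)}
\end{equation*}
uniformly in $x$. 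The subtle point I anticipate is that the Duhamel truncation $\int_0^t$, rather than $\int_{\mathbb R}$, disrupts the clean multiplier structure in $t$; I would overcome this either by decomposing $K$ as a full-line Duhamel integral minus a backward-in-time homogeneous correction (the latter controlled directly by Lemma~\ref{L2.3}) or by an appropriate Christ--Kiselev type reduction adapted to the present $L_2$-to-$L_2$ setting.

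Once both endpoints are in place, complex interpolation in $s$ on the linear operator $f\mapsto K(\cdot,x,\cdot;f)$ will yield the full range $s\in[-1,2]$ with the power $t_0^{1/3-s/6}$ uniformly in $x$. The continuity $K\in C_b(\mathbb R^x;\widetilde H^{(s+1)/3,s+1}(B_T))$ then follows by approximating $f$ in $L_2(0,T;\widetilde H^s)$ by smooth data---for which continuity in $x$ comes from Lemma~\ref{L2.3} via dominated convergence---and passing to the limit using the uniform estimate just derived.
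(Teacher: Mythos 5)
Your overall architecture (bound the two endpoints $s=-1$ and $s=2$, then interpolate in $s$) is exactly the paper's, your $s=-1$ argument via Minkowski, Lemma~\ref{L2.3} and Cauchy--Schwarz is the paper's argument verbatim, and your arithmetic for the interpolated power $t_0^{1/3-s/6}$ is correct. The problem is the $s=2$ endpoint, where you have misdiagnosed the difficulty and then left the actual proof unfinished. You write that ``Minkowski always delivers $t_0^{1/2}$ whereas the claim demands $t_0^0$,'' but for $t_0\in(0,T]$ a bound $c\,t_0^{1/2}\|f\|$ is \emph{stronger} than the claimed $c(T)\,t_0^{0}\|f\|$, since $t_0^{1/2}\leq T^{1/2}$; there is no obstruction coming from the time power at $s=2$. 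The genuine obstacle there is different: the norm of $\widetilde H^{1,3}(B_{t_0})$ contains a $t$-derivative, and $\partial_t$ does not pass through the Duhamel integral $\int_0^t S(t-\tau)f(\tau)\,d\tau$ by Minkowski alone because it also hits the upper limit. The paper disposes of this in one line with the commutation identities $\partial_y^3 K(\cdot;f)=K(\cdot;\partial_y^3 f)$ and $\partial_t K(\cdot;f)=f+K\bigl(\cdot;(b\partial_x+\partial_x^3+\partial_x\partial_y^2)f\bigr)$: both $\partial_y^3 f$ and $(b\partial_x+\partial_x^3+\partial_x\partial_y^2)f$ lie in $L_1(0,T;\widetilde H^{-1})$ when $f\in L_2(0,T;\widetilde H^2)$, so the $s=-1$ case of \eqref{2.16} applied under the time integral already gives $\|K(\cdot,x,\cdot;f)\|_{\widetilde H^{1,3}(B_{t_0})}\leq c(T)\|f\|_{L_2(0,t_0;\widetilde H^2)}$ uniformly in $x$ (the inhomogeneous term $f$ itself is controlled on the line $x=\mathrm{const}$ by the trace theorem). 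No mode-by-mode multiplier analysis is needed.

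By contrast, the machinery you propose for $s=2$ is not brought to a conclusion: you yourself flag that the truncation $\int_0^t$ destroys the convolution structure in $t$, and neither of your two proposed repairs is secure as stated. In particular, the Christ--Kiselev lemma is unavailable at the exponent pair you need, since your source and target are both $L_2$-type in the time variable and Christ--Kiselev requires the source integrability exponent to be strictly smaller than the target one; the ``full-line Duhamel minus backward correction'' route can be made to work but requires a separate argument (the $\sgn(t-\tau)$ factor produces a Hilbert-transform-type multiplier whose boundedness on the relevant anisotropic space must be checked) that you do not supply. So as written the $s=2$ endpoint, and hence the whole lemma, is not proved. The fix is simply to replace that entire paragraph by the two commutation identities above and a second application of your own $s=-1$ estimate.
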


\begin{proof}
For $s=-1$ it follows from \eqref{2.16} that
$$
\|K(\cdot,x,\cdot;f)\|_{L_2(B_{t_0})} \leq c(T) \|f\|_{L_1(0,t_0;\widetilde H^{-1})} \leq 
c(T)t_0^{1/2} \|f\|_{L_2(0,t_0;\widetilde H^{-1})}.
$$
Next,
\begin{gather*}
\partial_y^3 K(t,x,y;f) = K(t,x,y;\partial_y^3 f),\\
\partial_t K(t,x,y;f) = f(t,x,y) + K(t,x,y;(b\partial_x+\partial_x^3 +\partial_x\partial_y^2)f),
\end{gather*}
and again applying \eqref{2.16} for $s=-1$ we derive that
$$
\|K(\cdot,x,\cdot;f)\|_{\widetilde H^{1,3}(B_{t_0})} \leq c(T) \|f\|_{L_2(0,t_0;\widetilde H^2)}.
$$
For intermediate values of $s$ the result follows by interpolation.
\end{proof}

\begin{remark}\label{R2.1}
If $f\in L_1(0,T;L_2(\Sigma))$, then Lemma~\ref{L2.4} immediately provides that $K_x(t,x,y;f)\in C_b(\mathbb R^x;  L_2(B_T))$ and uniformly with respect to $x\in\mathbb R$
\begin{equation}\label{2.24}
\|K_x(\cdot,x,\cdot;f)\|_{L_2(B_T)} \leq c(T) \|f\|_{L_1(0,T;L_2(\Sigma))}.
\end{equation}
\end{remark}

If $u_0\in L_2(\Sigma)$, $f\in L_1(0,T;L_2(\Sigma))$, then a function
\begin{equation}\label{2.25}
u(t,x,y) \equiv S(t,x,y;u_0) +K(t,x,y;f)
\end{equation}
is a week solution to an initial-boundary value problem in a strip $\Sigma$ to problem \eqref{2.1}, \eqref{1.2} (for $(x,y)\in \Sigma$), \eqref{1.4} (for $(t,x)\in (0,T)\times \mathbb R)$) (see, for example, \cite{BF13} ).

\medskip

In what follows, we need some properties of solutions to an algebraic equation
\begin{equation}\label{2.26}
z^3-(\lambda_l-b) z+p=0, \qquad p=\varepsilon+i\theta \in \mathbb C.
\end{equation}
For $\varepsilon>0$ we denote by $z_0(p,l)$ the unique root of this equation, such
that $\Re z_0<0$.

\begin{lemma}\label{L2.5}
There exists
\begin{equation}\label{2.27}
\lim\limits_{\varepsilon\to +0} z_0(\varepsilon+i\theta,l) =r_0(\theta,l)
=p(\theta,l)+iq(\theta,l),
\end{equation}
where $r_0(\cdot,l)\in C(\mathbb R)$, $r_0(-\theta,l)=\overline{r_0(\theta,l)}$, $p(\theta,l),q(\theta,l)\in \mathbb R$ and
\begin{equation}\label{2.28}
|r_0(\theta,l)| \leq c(|\theta|^{1/3}+\lambda_l^{1/2}+|b|^{1/2}), \qquad c=\text{const}>0.
\end{equation}
If $\lambda_l\geq b$, then
\begin{equation}\label{2.29}
p(\theta,l) \leq -c_0(|\theta|^{1/3}+(\lambda_l-b)^{1/2}). \qquad c_0=\text{const}>0.
\end{equation}
If $\lambda_l<b$, then for $|\theta|\geq 2((b-\lambda_l)/3)^{3/2}$
\begin{equation}\label{2.30}
p(\theta,l) \leq -c_0 \bigl(|\kappa_l(\theta)|-2\sqrt{(b-\lambda_l)/3}\bigr), \qquad c_0=\text{const}>0,
\end{equation}
while for $|\theta|< 2((b-\lambda_l)/3)^{3/2}$
\begin{equation}\label{2.31}
p(\theta,l)=0, \quad |q(\theta,l)|\leq \sqrt{(b-\lambda_l)/3}, \quad
q(\theta,l)=\varphi_l^{-1}(\theta).
\end{equation}
\end{lemma}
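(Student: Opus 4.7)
The plan is to analyze \eqref{2.26} directly by algebra on the cubic, writing $a = \lambda_l - b$ for brevity. First I would establish well-definedness of $z_0(p, l)$ on $\{\Re p > 0\}$: for $p$ real, positive and large, the three roots cluster near the three cube roots of $-p$, giving exactly one with $\Re z < 0$; since any purely imaginary root $z = i\tau$ would force $\Re p = 0$, no root can cross the imaginary axis as $p$ varies in the open right half-plane, so this count is preserved globally. Estimate \eqref{2.28} then follows from $|z|^3 \leq |a||z| + |p|$: splitting $|z|^2 \leq 2|a|$ from $|z|^2 > 2|a|$ (in which case $|z|^3 \leq 2|p|$) gives $|z| \leq c(|p|^{1/3} + |a|^{1/2})$, and $|a| \leq \lambda_l + |b|$ yields \eqref{2.28} uniformly in $\varepsilon$. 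The uniform bound together with continuity of simple roots in the coefficients then produces the limit $r_0$ and its continuity in $\theta$.

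Next, setting $\varepsilon = 0$ and writing $r_0 = p + iq$, the cubic decomposes into
\begin{equation*}
p(p^2 - 3q^2 - a) = 0, \qquad q(q^2 + a - 3p^2) = \theta,
\end{equation*}
giving two cases: \textbf{(A)} $p = 0$ and $\varphi_l(q) = \theta$; or \textbf{(B)} $p^2 = 3q^2 + a$ (requiring $3q^2 + a \geq 0$) with $q(4q^2 + a) = -\theta/2$, in which $p = -\sqrt{3q^2 + a}$ by $\Re r_0 \leq 0$.

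For $\lambda_l \geq b$ ($a \geq 0$), the starting point $r_0(0,l) = -\sqrt{a}$ lies in Case B, and for $a > 0$ the gap $|p| \geq \sqrt{a}$ in Case B separates it strictly from Case A, so continuity in $\theta$ keeps us in Case B throughout (with $a = 0$ handled directly). Thus $|p| \geq \sqrt{a}$ automatically, and to estimate $|p|$ by $|\theta|^{1/3}$ I argue by dichotomy: if $|\theta| \leq 10 a^{3/2}$ then $|p| \geq \sqrt{a} \geq c|\theta|^{1/3}$; otherwise $|q|^2 \geq a$ (else $|\theta|/2 = |q|(4q^2 + a) \leq 5 a^{3/2}$), whence $|\theta|/2 \leq 5|q|^3$ yields $|q| \geq c|\theta|^{1/3}$ and $|p| \geq \sqrt{3}\,|q| \geq c|\theta|^{1/3}$. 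Combining gives \eqref{2.29}.

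The principal obstacle is $\lambda_l < b$ ($a < 0$). At $\theta = 0$, direct perturbation of $z^3 + |a|z + \varepsilon = 0$ yields the small root $z_0 \sim -\varepsilon/|a| \to 0$, so $r_0(0,l) = 0$ (Case A, $q = 0$). A monotonicity analysis of $g(q) = q(|a| - 4q^2)$ on $|q| \geq \sqrt{|a|/3}$ shows that Case B is solvable precisely for $|\theta| \geq 2(|a|/3)^{3/2}$, so below the threshold $r_0$ must sit in Case A on the middle branch of $\varphi_l$ (giving \eqref{2.31}), and above it continuity forces a switch to Case B. The key algebraic step in Case B is to observe that from $\theta = -8q^3 + 2|a|q$ and $\kappa_l^3 - |a|\kappa_l = \theta$ one has the factorization
\begin{equation*}
\kappa_l^3 - |a|\kappa_l + 8q^3 - 2|a|q = (\kappa_l + 2q)\bigl((\kappa_l - q)^2 + 3q^2 - |a|\bigr) = 0,
\end{equation*}
whose second factor is strictly positive on the range $|q| \geq \sqrt{|a|/3}$, $|\kappa_l| \geq 2\sqrt{|a|/3}$, forcing $q = -\kappa_l/2$. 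Substituting,
\begin{equation*}
|p|^2 = 3q^2 - |a| = \tfrac{3}{4}\bigl(|\kappa_l| - 2\sqrt{|a|/3}\bigr)\bigl(|\kappa_l| + 2\sqrt{|a|/3}\bigr),
\end{equation*}
and \eqref{2.30} follows by splitting the regimes $|\kappa_l| \geq 4\sqrt{|a|/3}$ and $2\sqrt{|a|/3} \leq |\kappa_l| < 4\sqrt{|a|/3}$. The symmetry $r_0(-\theta,l) = \overline{r_0(\theta,l)}$ is immediate from conjugating \eqref{2.26}, and continuity across the threshold $|\theta| = 2(|a|/3)^{3/2}$ is verified by checking that both branches meet at $p = 0$, $|q| = \sqrt{|a|/3}$.
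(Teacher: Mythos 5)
Your proposal is correct, and it reaches the conclusion by a genuinely different route than the paper. The paper invokes Cardano's formula: it writes $p(\theta,l)$ explicitly as a difference of nested radicals in the regime $\lambda_l<b$, $|\theta|>2((b-\lambda_l)/3)^{3/2}$, verifies the derivative bound $p'(\theta,l)\sgn\theta\leq -2^{-4/3}3^{-1/2}|\theta|^{-2/3}$, and obtains \eqref{2.30} by integrating this against $\kappa_l'(\theta)\leq\theta^{-2/3}$ from the threshold (where $p=0$); the remaining assertions are declared to follow "evidently." You instead avoid radicals entirely: splitting the limiting cubic into real and imaginary parts gives the clean dichotomy $p=0$ versus $p^2=3q^2+(\lambda_l-b)$, branch selection is done by continuity plus connectedness of the range of $p(\cdot,l)$ (resp.\ $q(\cdot,l)$), and in the outer regime the factorization forces the exact identities $q=-\kappa_l(\theta)/2$ and $p^2=\tfrac34\bigl(\kappa_l^2-\tfrac43(b-\lambda_l)\bigr)$, from which \eqref{2.30} is immediate --- indeed your final case split is superfluous, since $(A-B)(A+B)\geq(A-B)^2$ already gives $|p|\geq\tfrac{\sqrt3}{2}(|\kappa_l(\theta)|-2\sqrt{(b-\lambda_l)/3})$ with an explicit constant. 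What the paper's argument buys is brevity on the page, at the cost of an unchecked differentiation of a nested radical; what yours buys is a self-contained, purely algebraic derivation that also makes the threshold structure \eqref{2.31} and the bounds \eqref{2.28}, \eqref{2.29} explicit rather than "evident." The only places where you lean on standard facts without proof --- existence and $\theta$-continuity of the limit $r_0$ near the boundary branch points $|\theta|=2((b-\lambda_l)/3)^{3/2}$, where the roots are not simple and one must use the two-valued Puiseux continuation --- are glossed at the same level as in the paper, and your verification that the two branches meet at $p=0$, $|q|=\sqrt{(b-\lambda_l)/3}$ covers the essential point.
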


\begin{proof}
This lemma evidently follows from the Cardano formula. In particular, if $\lambda_l<b$ then for $|\theta|> 2((b-\lambda_l)/3)^{3/2}$
$$
p(\theta,l) = -\frac{\sqrt{3}}2 \left[
\sqrt[3]{\frac{\theta}2 + \sqrt{\frac{\theta^2}4-\frac{(b-\lambda_l)^3}{27}}} -
\sqrt[3]{\frac{\theta}2 -  \sqrt{\frac{\theta^2}4-\frac{(b-\lambda_l)^3}{27}}}\right],
$$
therefore, it is easily verified that 
$$
p'(\theta,l)\sgn\theta \leq -\frac1{2^{4/3}\sqrt{3}}|\theta|^{-2/3}.
$$ 
Since obviously $\varphi'_l(\xi) \geq \varphi_l^{2/3}(\xi)$ for $|\xi|\geq 2\sqrt{(b-\lambda_l)/3}$ and so $\theta^{-2/3}\geq \kappa'_l(\theta)$ inequality \eqref{2.30} follows.
\end{proof}

Now introduce a special solution of equation \eqref{2.1} for $f\equiv 0$ of "boundary potential" type.

\begin{definition}\label{D2.1}
Let $\mu \in\widetilde{\EuScript S}(\overline{B})$. Define for $x\geq 0$
\begin{equation}\label{2.32}
J(t,x,y;\mu) \equiv \sum\limits_{l=1}^{+\infty} \EuScript F^{-1}_t \Bigl[ e^{r_0(\theta,l)x}\widehat\mu(\theta,l)\Bigr](t) \psi_l(y),
\end{equation}
where $\widehat\mu(\theta,l)$ is given by formula \eqref{1.10}.
\end{definition}

\begin{remark}\label{R2.2}
Since $\widehat J(\theta,x,l;\mu) =   e^{r_0(\theta,l)x}\widehat\mu(\theta,l)$ and $\Re r_0(\theta,l)\leq 0$, then 
$ J(t,x,l;\mu) \in C_b(\overline{\mathbb R}_+^x;\widetilde H^{s/3.s}(B))$ for any $s\in\mathbb R$ and 
\begin{equation}\label{2.33}
\|J(\cdot,\cdot,\cdot;\mu)\|_{ C_b(\overline{\mathbb R}_+^x;\widetilde H^{s/3.s}(B))} \leq 
\|\mu\|_{\widetilde H^{s/3.s}}.
\end{equation}
Therefore, the notion of the function $J(t,x,y;\mu)$ can be extended  in the space $C_b(\overline{\mathbb R}_+^x;\widetilde H^{s/3.s}(B))$ for any function $\mu\in \widetilde H^{s/3.s}(B))$ for certain $s\in\mathbb R$ with conservation of inequality \eqref{2.33}. It is obvious, that $J(t,0,y;\mu)\equiv \mu(t,y)$.

Moreover, in the most important for us case $s\geq 0$ the values $\widehat\mu(\theta,l)$ can be defined directly as limits in $L_2(B)$, for example, of integrals $\displaystyle \int_{-T}^T\!\int_0^L e^{-i\theta t}\psi_l(y)\mu(t,y)\,dtdy$, $T\to +\infty$. Then the function $J(t,x,y;\mu)$ can be equivalently defined simply by formula \eqref{2.32}.
\end{remark}

\begin{lemma}\label{L2.6}
If $\mu\in \widetilde H^{s/3.s}(B)$ for certain $s\geq 0$, then for any $n\leq s$
there exists $\partial_x^n J(t,x,y;\mu)\in  C_b(\overline{\mathbb R}_+^x;\widetilde H^{(s-n)/3.s-n}(B))$ and uniformly with respect to $x\geq 0$
\begin{equation}\label{2.34}
\|\partial_x^n J(\cdot,x,\cdot;\mu)\|_{\widetilde H^{(s-n)/3.s-n}(B)} \leq c(s) 
\|\mu\|_{\widetilde H^{s/3.s}(B)}.
\end{equation}
\end{lemma}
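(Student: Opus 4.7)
The plan is to differentiate the Fourier series \eqref{2.32} term-by-term in $x$, apply Parseval in the $(t,y)$-variables, and exploit the pointwise information on $r_0(\theta,l)$ supplied by Lemma~\ref{L2.5}. By the density argument in Remark~\ref{R2.2} it suffices to establish \eqref{2.34} for $\mu\in\widetilde{\EuScript S}(\overline B)$, for which all series are absolutely convergent and one has $\widehat{\partial_x^n J}(\theta,x,l;\mu)=r_0(\theta,l)^n e^{r_0(\theta,l)x}\widehat\mu(\theta,l)$.

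Parseval in $t$ together with the Plancherel identity for the orthonormal basis $\{\psi_l\}$ turns the square of the left-hand side of \eqref{2.34} into
$$\sum_{l=1}^{+\infty}\int_{\mathbb R}(|\theta|^{2/3}+l^2)^{s-n}|r_0(\theta,l)|^{2n}e^{2p(\theta,l)x}|\widehat\mu(\theta,l)|^2\,d\theta,$$
so the matter reduces to the pointwise multiplier inequality $(|\theta|^{2/3}+l^2)^{s-n}|r_0(\theta,l)|^{2n}e^{2p(\theta,l)x}\leq c(s)(|\theta|^{2/3}+l^2)^{s}$ for all $x\geq 0$. The exponential factor is bounded by $1$ since $p(\theta,l)\leq 0$ by \eqref{2.29}--\eqref{2.31}. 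The size bound \eqref{2.28} combined with the elementary $\lambda_l\leq c\,l^2$ (read off from the explicit trigonometric eigenfunctions) gives $|r_0(\theta,l)|^{2}\leq c(|\theta|^{2/3}+l^2+|b|+1)$, and because $l\geq 1$ one has $|\theta|^{2/3}+l^2+|b|+1\leq c(b)(|\theta|^{2/3}+l^2)$. Raising this to the $n$-th power and using $n\leq s$ absorbs the weight $(|\theta|^{2/3}+l^2)^{s-n}$ and produces exactly $(|\theta|^{2/3}+l^2)^{s}$, whose sum-integral against $|\widehat\mu|^2$ is $\|\mu\|_{\widetilde H^{s/3,s}(B)}^2$.

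For the $C_b$-assertion, the same integrand is majorized uniformly in $x\geq 0$ by the $x$-independent function $c(s)(|\theta|^{2/3}+l^2)^{s}|\widehat\mu(\theta,l)|^2$. Dominated convergence — using continuity of $\theta\mapsto r_0(\theta,l)$ from Lemma~\ref{L2.5} — then yields continuity of $x\mapsto\partial_x^n J(\cdot,x,\cdot;\mu)$ in the $\widetilde H^{(s-n)/3,s-n}(B)$-norm, while the uniform bound supplies boundedness. Membership in the tilde-space rather than just in the ambient anisotropic Sobolev space is automatic: the representation uses the basis $\{\psi_l\}$, which encodes the boundary conditions of case a)--d) in the $y$-variable on every slice, and this persists in the limit.

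I foresee no substantial obstacle. The only delicate bookkeeping concerns the dichotomy $\lambda_l\gtrless b$ in the definition of $r_0$, but Lemma~\ref{L2.5} provides \eqref{2.28} uniformly in both regimes, and the weight $l^2$ absorbs the additive constants $|b|+1$ solely because $l\geq 1$.
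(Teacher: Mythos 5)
Your proposal is correct and follows essentially the same route as the paper, which proves \eqref{2.34} exactly as it proves \eqref{2.33}: via the identity $\widehat{\partial_x^n J}(\theta,x,l)=r_0^n(\theta,l)e^{r_0(\theta,l)x}\widehat\mu(\theta,l)$, the bound $\Re r_0\leq 0$ for the exponential, and the size estimate \eqref{2.28} (with $\lambda_l\sim l^2$ and $l\geq 1$) to absorb the multiplier $|r_0|^{2n}$ into the weight. The only cosmetic slip is that the dominated-convergence step for the $C_b$ assertion rests on continuity in $x$ of $e^{r_0(\theta,l)x}$ for fixed $(\theta,l)$, not on continuity of $\theta\mapsto r_0(\theta,l)$.
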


\begin{proof}
The proof is similar to the proof of inequality \eqref{2.33} also with the use of \eqref{2.28}. 
\end{proof}

\begin{lemma}\label{L2.7}
If $\mu\in \widetilde H^{(s+1)/3.s+1}(B)$ for certain $s\geq 0$, then for any $j\leq s/3$ there exists
$\partial_t^j J(t,x,y;\mu)\in C_b(\mathbb R^t;\widetilde H^{s-3j}_+)$ and uniformly with respect to
$t\in\mathbb R$
\begin{equation}\label{2.35}
\|\partial_t^j J(t,\cdot,\cdot;\mu)\|_{\widetilde H_+^{s-3j}} \leq c(s) \|\mu\|_{\widetilde H^{(s+1)/3.s+1}(B)}.
\end{equation}
\end{lemma}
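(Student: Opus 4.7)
The plan is to estimate $\|\partial_t^j J(t,\cdot,\cdot)\|_{\widetilde H^{s-3j}_+}$ directly from the Fourier representation \eqref{2.32}, in the same spirit as Lemmas~\ref{L2.3} and \ref{L2.6}. By density and interpolation I may assume $\mu\in\widetilde{\EuScript S}(\overline{B})$ and that $m:=s-3j$ is a non-negative integer. Since $\{\psi_l\}$ is orthonormal in $L_2(0,L)$ and, by integration by parts using the boundary conditions \eqref{1.4} in each of the cases (a)--(d), one has $(\psi_k',\psi_l')_{L_2(0,L)}=\lambda_l\delta_{kl}$, for every integer $\beta\geq 0$
\[
\int_0^L\Bigl|\sum_l F_l(x)\psi_l^{(\beta)}(y)\Bigr|^2\,dy=\sum_l\lambda_l^\beta|F_l(x)|^2.
\]
The $\widetilde H^m_+$-norm of $\partial_t^j J(t,\cdot,\cdot)$ therefore reduces to bounding, for every $\alpha+\beta\leq m$, the quantity
\[
\sum_l\lambda_l^\beta\int_0^\infty |F_l^{(\alpha,j)}(t,x)|^2\,dx,\qquad F_l^{(\alpha,j)}(t,x):=\EuScript F_t^{-1}\bigl[(i\theta)^j r_0(\theta,l)^\alpha e^{r_0(\theta,l)x}\widehat\mu(\theta,l)\bigr](t),
\]
uniformly in $t\in\mathbb R$.

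Next, following the proof of Lemma~\ref{L2.3}, I split the $(\theta,l)$-range into the finitely many ``degenerate'' indices with $\lambda_l<b$ combined with $|\theta|<2((b-\lambda_l)/3)^{3/2}$ (where $p(\theta,l)\equiv 0$ and $r_0=iq$ by \eqref{2.31}) and the complement. For the degenerate part, the change of variable $\xi=q(\theta,l)=\varphi_l^{-1}(\theta)$ on the middle branch converts $F_l^{(\alpha,j)}(t,\cdot)$ into the inverse $\xi$-Fourier transform of a compactly supported smooth function, so Plancherel on $\mathbb R^x$ yields the required uniform-in-$t$ $L_2$-bound with constant controlled by $\|\mu\|_{\widetilde H^{(s+1)/3,s+1}(B)}$. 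For the complement, where $p(\theta,l)<0$, I pull the modulus inside the $\theta$-integral and apply Cauchy--Schwarz in $\theta$ with weight $(|\theta|^{2/3}+l^2)^{s+1}$ matching the boundary norm, then integrate the residual factor $e^{2p(\theta,l)x}$ over $x>0$, giving $1/(-2p)$. Using $|r_0|\leq c(|\theta|^{1/3}+l)$ from \eqref{2.28} and $-p(\theta,l)\geq c(|\theta|^{1/3}+l)$ from \eqref{2.29}--\eqref{2.30}, the remaining $\theta$-integral is bounded uniformly in $l$ as long as $\alpha+\beta+3j\leq s$, and summation in $l$ yields the required estimate on $A_{\alpha,\beta}(t)$.

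The main obstacle will be the endpoint case $\alpha+\beta+3j=s$, in which the Cauchy--Schwarz $\theta$-integral is logarithmically borderline as $|\theta|\to+\infty$. To close this endpoint I would exploit that $J$ satisfies $J_t+bJ_x+J_{xxx}+J_{xyy}=0$ identically (which follows from the algebraic relation $r_0^3-(\lambda_l-b)r_0+i\theta=0$ in \eqref{2.26} together with $-\lambda_l\psi_l=\psi_l''$), so that iterating gives $\partial_t^j J$ as a linear combination of $\partial_x^a\partial_y^b J$ with $a\geq j$ and $a+b\leq 3j$. Thereby the $\widetilde H^{s-3j}_+$-bound on $\partial_t^j J$ reduces to $L_{2,+}$-bounds on $\partial_x^{a'}\partial_y^{b'}J$ with $a'+b'\leq s$ and $a'\geq j$; these can be handled by Lemma~\ref{L2.6} combined with the fractional Sobolev embedding $\widetilde H^{1/2+\varepsilon}(\mathbb R^t)\hookrightarrow C_b(\mathbb R^t)$ at strictly sub-endpoint exponents, with the endpoint itself recovered by real interpolation between the two adjacent integer regularities.
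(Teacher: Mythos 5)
Your setup (reduction to Schwartz data and integer $s$ via interpolation, the explicit representation \eqref{2.36}, the splitting into the finitely many degenerate branches where $r_0=iq$ and the complement, and the change of variables $\xi=q(\theta,l)$ on the degenerate part) coincides with the paper's proof. The gap is in your treatment of the non-degenerate part. Applying Cauchy--Schwarz in $\theta$ with the weight $(|\theta|^{2/3}+l^2)^{s+1}$ and then integrating $e^{2p(\theta,l)x}$ over $x>0$ only buys the factor $(-2p)^{-1}\sim(|\theta|^{2/3}+l^2)^{-1/2}$, i.e.\ half a power of the Jacobian $\varphi_l'\sim(|\theta|^{2/3}+l^2)$; as you yourself compute, this fails (logarithmically) exactly when $\alpha+3j=s$ and $\beta=0$, which is an unavoidable part of the $\widetilde H^{s-3j}_+$-norm. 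The paper avoids the Cauchy--Schwarz loss entirely by invoking the Bona--Sun--Zhang inequality
$\bigl\|\int_{\mathbb R}e^{\gamma(\kappa)x}f(\kappa)\,d\kappa\bigr\|_{L_2(\mathbb R_+^x)}\leq c(\varepsilon)\|f\|_{L_2(\mathbb R)}$ for $\Re\gamma(\kappa)\leq-\varepsilon|\kappa|$, applied after the substitution $\theta=\varphi_l(\kappa)$; this is an $L_2\to L_2$ bound that recovers the full power $\varphi_l'$ of the Jacobian and hence exactly the $\widetilde H^{(s+1)/3,s+1}(B)$-norm, endpoint included. That inequality (or some equivalent square-function estimate) is the missing ingredient in your argument.

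Your proposed repair of the endpoint does not close it. Trading $\partial_t$ for $b\partial_x+\partial_x^3+\partial_x\partial_y^2$ via the equation merely reshuffles the problem: the resulting term $\partial_x^{s}J$ (all derivatives in $x$, none in $y$) is precisely the borderline case $\beta=0$ again. More seriously, Lemma~\ref{L2.6} controls $\sup_{x\geq 0}\|\partial_x^nJ(\cdot,x,\cdot)\|_{\widetilde H^{(s-n)/3,s-n}(B)}$, i.e.\ an $L_\infty$-in-$x$ quantity; no Sobolev embedding in $t$ can convert that into the required $\sup_t\|\cdot\|_{L_2(\Sigma_+)}$, because square-integrability over the unbounded half-line in $x$ is not implied by a uniform-in-$x$ bound. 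Finally, interpolation cannot recover the lossless endpoint from your sub-endpoint estimates: the $\varepsilon$-loss is uniform along the scale of integer regularities, so every interpolated estimate inherits it.
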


\begin{proof}
Without loss of generality one can assume that $\mu \in\widetilde{\EuScript S}(\overline{B})$. Let $s$ be integer. Then for $3j+n+m=s$
\begin{equation}\label{2.36}
\partial_t^j \partial_x^n \partial_y^m J(t,x,y;\mu) = \sum\limits_{l=1}^{+\infty} \frac1{2\pi}
\int_{\mathbb R} (i\theta)^j r_0^n(\theta,l)e^{it\theta}e^{r_0(\theta,l)x}\widehat\mu(\theta,l)\,d\theta \psi_l^{(m)}(y).
\end{equation}
Divide the expression in the right side of \eqref{2.36} into two parts. Let $l_0$ be such that $\lambda_l<b$ for $l\leq l_0$ and let
$$
I_1 \equiv \sum\limits_{l=1}^{l_0} \frac1{2\pi}
\int_{|\theta|<2((b-\lambda_l)/3)^{3/2} } (i\theta)^j r_0^n(\theta,l)e^{it\theta}e^{r_0(\theta,l)x}\widehat\mu(\theta,l)\,d\theta \psi_l^{(m)}(y)
$$
(it is absent if $\lambda_l\geq b$ $\forall l$) and let $I_2$ be the rest part.

First consider $I_1$. According to \eqref{2.31} $r_0(\theta,l)=iq(\theta,l)$ and changing variables $\xi=q(\theta,l)$ we derive that 
$$
I_1= \sum\limits_{l=1}^{l_0} \frac1{2\pi}
\int_{|\xi|<\sqrt{(b-\lambda_l)/3} }e^{it\varphi_l(\xi)} e^{i\xi x}(i\varphi_l(\xi))^j (i\xi)^n\widehat\mu(\varphi_l(\xi),l)\varphi_l'(\xi)\,d\xi \psi_l^{(m)}(y).
$$
Thus, similarly to \eqref{2.15} the following estimate is easily obtained: uniformly with respect to $t\in\mathbb R$
$$
\|I_1\|_{L_{2}(\Sigma)} \leq c\|\mu\|_{L_2(B)}.
$$
Next, 
\begin{multline*}
I_2=  \sum\limits_{l=1}^{l_0} \frac1{2\pi}
\int_{|\theta|>2((b-\lambda_l)/3)^{3/2} } (i\theta)^j r_0^n(\theta,l)e^{it\theta}e^{r_0(\theta,l)x}\widehat\mu(\theta,l)\,d\theta \psi_l^{(m)}(y) \\+
 \sum\limits_{l=l_0+1}^{+\infty} \frac1{2\pi}
\int_{\mathbb R} (i\theta)^j r_0^n(\theta,l)e^{it\theta}e^{r_0(\theta,l)x}\widehat\mu(\theta,l)\,d\theta \psi_l^{(m)}(y).
\end{multline*}
We use the following fundamental inequality from \cite{BSZ}:
if certain continuous function $\gamma(\kappa)$ satisfies an inequality 
$\Re \gamma(\kappa)\leq -\varepsilon|\kappa|$ for some $\varepsilon>0$ and
all $\kappa\in \mathbb R$, then
$$
\Bigl\|\int_{\mathbb R} e^{\gamma(\kappa)x} f(\kappa)\,d\kappa\Bigr\|_
{L_2(\mathbb R^x_+)} \leq c(\varepsilon)\|f\|_{L_2(\mathbb R)}.
$$
Changing variables $\theta=\varphi_l(\kappa)$ we derive 
with the use of \eqref{2.28}--\eqref{2.30} that uniformly with respect to $t\in \mathbb R$ for  $\gamma(\kappa)=-c_0(|\kappa|-2\sqrt{(b-\lambda_l)/3})$, $\chi(\theta,l)=1$ for $|\theta|>2((b-\lambda_l)/3)^{3/2})$ (then $|\kappa|>2\sqrt{(b-\lambda_l)/3}$) and $\chi(\theta,l)=0$ for other values of $\theta$ if $l\leq l_0$, $\gamma(\kappa)=-c_0|\kappa|$, $\chi(\theta,l)\equiv 1$ if $l>l_0$
\begin{multline*}
\|I_2\|_{L_{2,+}} \\=  \frac1{2\pi}\Bigl(\sum\limits_{l=1}^{+\infty} 
\Bigl\|\int_{\mathbb R} \theta^j  r_0^n(\theta,l)e^{it\theta}e^{r_0(\theta,l)x}\widehat\mu(\theta,l)\chi(\theta,l)
\,d\theta 
\Bigr\|_{L_2(\mathbb R_+^x)}^2\Bigl\|\psi^{(m)}_l\Bigr\|_{L_2(0,L)}^2\Bigr)^{1/2} \\ \leq 
c\Bigl(\sum\limits_{l=1}^{+\infty}  
\Bigl\|\int_{\mathbb R} |\theta|^j (|\theta|^{1/3}+\lambda_l^{1/2})^n e^{p(\theta,l)x}|\widehat\mu(\theta,l)| 
\chi(\theta,l)\,d\theta \Bigr\|_{L_2(\mathbb R_+^x)}^2 l^{2m}\Bigr)^{1/2}\\ \leq
c_1\Bigl(\sum\limits_{l=1}^{+\infty}  
\Bigl\|\int_{\mathbb R} (\kappa^2+l^2)^{(3j+n)/2} e^{\gamma(\kappa)x}|\widehat\mu(\varphi_l(\kappa),l)| \chi(\varphi_l(\kappa),l)\varphi'_l(\kappa) 
\,d\kappa \Bigr\|_{L_2(\mathbb R_+^x)}^2 l^{2m}\Bigr)^{1/2}\\ \leq
c_2 \Bigl(\sum\limits_{l=1}^{+\infty} \left\|(\kappa^2+l^2)^{(3j+n)/2}
\widehat\mu(\varphi_l(\kappa),l)\chi(\varphi_l(\kappa),l)\varphi'_l(\kappa)\right\|_{L_2(\mathbb R^\kappa)}^2 l^{2m}\Bigr)^{1/2} \\ \leq 
c_3 \Bigl(\sum\limits_{l=1}^{+\infty}\bigl\|(|\theta|^{2/3}+l^2)^{(3j+n+m+1)/2}
\widehat\mu(\theta,l)\bigr\|_{L_2(\mathbb R^\theta)}^2 \Bigr)^{1/2}
= c_3 \|\mu\|_{\widetilde H^{(s+1)/3,s+1}(B)}.
\end{multline*}
Finally, use interpolation.
\end{proof}

\begin{lemma}\label{L2.8}
Let $\mu\in \widetilde H^{s/3,s}(B)$ for certain $s\geq -1/2$. Then for any $T>0$ and $j\leq s/3+1/6$
\begin{equation}\label{2.37}
\|\partial_t^j J(\cdot,\cdot,\cdot;\mu)\|_{L_2(0,T;\widetilde H^{s-3j+1/2}_+)} \leq
c(T,s)\|\mu\|_{\widetilde H^{s/3,s}(B)}.
\end{equation}
\end{lemma}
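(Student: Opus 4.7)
The plan is to mimic the proof of Lemma~\ref{L2.7} but with an $L_2$-in-$t$ norm on the left-hand side, which will buy a half-derivative in $x$ through the dispersive gain $\int_0^{+\infty} e^{2p(\theta,l)x}\,dx = 1/(2|p(\theta,l)|)$. By density I reduce to the case $\mu\in\widetilde{\EuScript S}(\overline B)$ and split $J = J_1 + J_2$ as in Lemma~\ref{L2.7}: $J_1$ collects the Fourier modes with $l\leq l_0$ (so $\lambda_l<b$) and $|\theta|<2((b-\lambda_l)/3)^{3/2}$, where $\Re r_0(\theta,l)=0$, while $J_2$ is the complementary part, where $p(\theta,l)=\Re r_0<0$ with the quantitative decay of Lemma~\ref{L2.5}.

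For the main piece $J_2$ I would extend the $t$-integration to all of $\mathbb R$ and apply Plancherel in $t$ together with orthonormality of $\{\psi_l\}$ in $L_2(0,L)$ (using $\|\psi_l^{(m)}\|_{L_2(0,L)}^2 = \lambda_l^m$). For integer $k = s-3j+1/2$ and any multi-index $\alpha=(n,m)$ with $|\alpha|\leq k$, the contribution of $\partial^\alpha\partial_t^j J_2$ to the squared norm reduces to
\[
\sum_l \lambda_l^m \int_{\mathbb R^\theta} |\theta|^{2j}\,|r_0(\theta,l)|^{2n}\,|\widehat\mu(\theta,l)|^2 \int_0^{+\infty} e^{2p(\theta,l)x}\,dx\,d\theta.
\]
The inner $x$-integral equals $1/(2|p(\theta,l)|)\leq c(|\theta|^{1/3}+\lambda_l^{1/2})^{-1}$ by \eqref{2.29}--\eqref{2.30}, while $|r_0|\leq c(|\theta|^{1/3}+\lambda_l^{1/2})$ by \eqref{2.28}. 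Combined with the trivial bounds $|\theta|^{2j}\leq c(|\theta|^{2/3}+\lambda_l)^{3j}$ and $\lambda_l^m\leq(|\theta|^{2/3}+\lambda_l)^m$, the integrand is dominated by $c(|\theta|^{2/3}+\lambda_l)^{3j+n+m-1/2}|\widehat\mu|^2$; since $n+m\leq s-3j+1/2$, this integrates to $c\|\mu\|^2_{\widetilde H^{s/3,s}(B)}$ with a constant independent of $T$.

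For $J_1$ the $\theta$-range is bounded and only finitely many $l$ contribute, so all $\widetilde H^k_+$-norms are equivalent to the $L_{2,+}$-norm. Here $r_0 = iq(\theta,l)$ is purely imaginary and provides no $x$-decay; instead, the change of variables $\xi = q(\theta,l)$ exhibits $J_{1,l}(t,\cdot,y)$ as an inverse Fourier transform in $x$ of a compactly supported function, so Plancherel in $x$ gives an $L_2(\mathbb R^x)$-norm independent of $t$, and integration in $t$ over $(0,T)$ produces only a factor $T^{1/2}$.

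The main technical subtlety is the possibly half-integer exponent $k = s-3j+1/2$; for integer $k$ the above calculation is direct, and for non-integer $k$ the bound follows either by interpolation between the two nearest integer exponents, or equivalently by reading the $\widetilde H^k_+$-norm directly on the Fourier/$\psi_l$ side, which is meaningful for any real $k\geq 0$. All other ingredients are already in place from Lemmas~\ref{L2.5}--\ref{L2.7}.
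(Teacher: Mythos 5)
Your high-frequency computation uses the right mechanism and coincides with the paper's treatment of its piece $\mu_1$ (Plancherel in $t$, orthonormality of $\psi_l$, the explicit integral $\int_0^{+\infty}e^{2p(\theta,l)x}\,dx=1/(2|p(\theta,l)|)$, interpolation for non-integer exponents), but your decomposition leaves a genuine gap. You put into $J_1$ only the set where $p(\theta,l)$ vanishes identically, i.e. $\lambda_l<b$ and $|\theta|<2((b-\lambda_l)/3)^{3/2}$, and claim that on the complement $1/(2|p(\theta,l)|)\leq c(|\theta|^{1/3}+\lambda_l^{1/2})^{-1}$ follows from \eqref{2.29}--\eqref{2.30}. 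This is false on two portions of your $J_2$. First, for $\lambda_l<b$ the bound \eqref{2.30} degenerates at the threshold: when $|\theta|\downarrow 2((b-\lambda_l)/3)^{3/2}$ one has $|\kappa_l(\theta)|\to 2\sqrt{(b-\lambda_l)/3}$, the right-hand side of \eqref{2.30} vanishes, and indeed $p(\theta,l)\to 0$ by continuity (compare \eqref{2.31}); the Cardano formula in the proof of Lemma~\ref{L2.5} gives $|p|\sim c\,(|\theta|-\theta_{thr})^{1/2}$, so $1/(2|p|)$ is unbounded while $(|\theta|^{2/3}+l^2)^{s}$ stays bounded, and $\int|\widehat\mu(\theta,l)|^2(2|p|)^{-1}\,d\theta$ cannot be controlled by $\|\widehat\mu(\cdot,l)\|_{L_2}^2$. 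Second, for $\lambda_l\geq b$ estimate \eqref{2.29} only yields $|p|\geq c_0(|\theta|^{1/3}+(\lambda_l-b)^{1/2})$, which is not comparable to $|\theta|^{1/3}+\lambda_l^{1/2}$ when $\lambda_l-b$ is small or zero (e.g. $\lambda_1=0=b$ in cases b), d)), and again $1/(2|p|)$ blows up as $\theta\to0$. On these sets the $L_2(\mathbb R^t\times\Sigma_+)$ norm of the corresponding piece of $J$ is simply not bounded by $\|\mu\|_{\widetilde H^{s/3,s}(B)}$, so your step of enlarging the time integral from $(0,T)$ to all of $\mathbb R$ is fatal there: finiteness of $T$ is essential for these modes.

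The repair is exactly the splitting the paper uses: the ``low'' part must be the whole compact set $\{l\leq l_0,\ |\theta|\lesssim\theta_0\}$, outside of which the strengthened bound $p(\theta,l)\leq -c_0(|\theta|^{1/3}+l)$ of \eqref{2.38} holds uniformly and your $1/(2|p|)$ argument goes through verbatim. On that compact set one does not use the $x$-decay at all; instead one bounds $\|\partial_t^jJ(\cdot,\cdot,\cdot;\mu_0)\|_{L_2(0,T;\widetilde H_+^{s_0})}$ by $T^{1/2}$ times the uniform-in-$t$ estimate \eqref{2.35} of Lemma~\ref{L2.7}, whose proof absorbs the degenerate decay through the change of variables $\theta=\varphi_l(\kappa)$ and the Bona--Sun--Zhang inequality; the loss of one derivative in \eqref{2.35} is harmless because $\widehat\mu_0$ is supported in a bounded set of $(\theta,l)$. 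Your treatment of $J_1$ (Plancherel in $x$ after the substitution $\xi=q(\theta,l)$) is fine for the region where $p\equiv0$, but it does not cover the degenerate modes above, and your closing remarks on non-integer exponents are correct but do not affect this issue.
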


\begin{proof}
Without loss of generality one can assume that $\mu \in\widetilde{\EuScript S}(\overline{B})$. By virtue of \eqref{2.29}, \eqref{2.30} there exists $l_0$ such that for $l>l_0$ and all $\theta$ and there exists $\theta_0\geq 1$ such that for $|\theta|\geq \theta_0$ and all $l$
\begin{equation}\label{2.38}
p(\theta,l) \leq -c_0(|\theta|^{1/3}+l)
\end{equation}
Similarly to \eqref{2.18} divide $\mu$ into two parts:
\begin{equation}\label{2.39}
\mu_{0}(t,y)\equiv \sum\limits_{l=1}^{l_0} \EuScript F^{-1}_t\bigr[ \widehat \mu(\theta,l)\eta(\theta_0+1-|\theta|)\bigr](x)\psi_l(y),\quad
\mu_{1}(t,y)\equiv \mu(t,y)-\mu_0(t,y).
\end{equation}

Let $s+1/2$ be an integer, then if $3j+n+m=s+1/2$ we derive from equality \eqref{2.36} and inequalities \eqref{2.28}, \eqref{2.38} that for the obviously defined function $\chi(\theta,l)$ (in particular, $\chi(\theta,l)=0$ for $l\leq l_0$, $|\theta|\leq \theta_0$)
\begin{multline}\label{2.40}
\|\partial_t^j\partial_x^n\partial_y^m J(\cdot,\cdot,\cdot;\mu_1)\|_{L_2(\mathbb R^t\times\Sigma_+)} \\ =
\Bigl(\sum\limits_{l=1}^{+\infty} 
\Bigl\| \theta^j  r_0^n(\theta,l)\Bigl(\int_{\mathbb R_+}e^{-2p(\theta,l)x}dx\Bigr)^{1/2}\widehat\mu(\theta,l) \chi(\theta,l)
\Bigr\|_{L_2(\mathbb R^\theta)}^2\Bigl\|\psi^{(m)}_l\Bigr\|_{L_2(0,L)}^2\Bigr)^{1/2} \\ \leq
c\Bigl(\sum\limits_{l=1}^{+\infty}  
\Bigl\| (|\theta|^j(|\theta|^{2/3}+l^2)^{(n-1)/2} |\widehat\mu(\theta,l)| 
\,d\theta \Bigr\|_{L_2(\mathbb R_+^x)}^2 l^{2m}\Bigr)^{1/2} \leq
c_1\|\mu\|_{\widetilde H^{s/3,s}(B)}.
\end{multline}
For $\mu_0$ inequality \eqref{2.35} yields that for any $s_0\geq 0$
\begin{multline}\label{2.41}
\|\partial_t^j J(\cdot,\cdot,\cdot;\mu_0)\|_{L_2(0,T;\widetilde H_+^{s_0})} \leq T^{1/2}
\|\partial_t^j J(\cdot,\cdot,\cdot;\mu_0)\|_{C_b(\mathbb R^t;\widetilde H_+^{s_0})}  \\ \leq 
cT^{1/2}\|\mu_0\|_{\widetilde H^{(s_0+1)/3+j, s_0+1+3j}(B)} \leq 
c_1T^{1/2}\|\mu\|_{\widetilde H^{s/3, s}(B)}. 
\end{multline}
To finish the proof we again use interpolation.
\end{proof}

\begin{corollary}\label{C2.1}
Let $\mu\in \widetilde H^{s/3,s}(B)$ for certain $s>1/2$. Then for any $T>0$ and $j,k$, such that $3j+k< s-1/2$,
\begin{equation}\label{2.42}
\|\partial_t^j J(\cdot,\cdot,\cdot;\mu)\|_{L_2(0,T;W_{\infty,+}^k)} \leq
c(T,s)\|\mu\|_{\widetilde H^{s/3,s}(B)}.
\end{equation}
\end{corollary}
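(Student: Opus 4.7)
The plan is to derive this corollary directly from Lemma~\ref{L2.8} combined with a Sobolev embedding on the two-dimensional half-strip $\Sigma_+$.

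First, I would apply Lemma~\ref{L2.8} with the given $j$. Since $3j+k<s-1/2$ implies $3j<s+1/2$, i.e., $j\leq s/3+1/6$, the hypothesis of Lemma~\ref{L2.8} is satisfied and we obtain
\begin{equation*}
\|\partial_t^j J(\cdot,\cdot,\cdot;\mu)\|_{L_2(0,T;\widetilde H_+^{s-3j+1/2})} \leq c(T,s)\|\mu\|_{\widetilde H^{s/3,s}(B)}.
\end{equation*}

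Next, I would invoke the Sobolev embedding $\widetilde H_+^{m}\hookrightarrow W_{\infty,+}^k$, valid whenever $m>k+1$ (the half-strip $\Sigma_+$ is two-dimensional, so the critical Sobolev threshold for $L_\infty$ control of $k$-th derivatives is $k+1$). Setting $m=s-3j+1/2$, the condition $m>k+1$ is exactly $3j+k<s-1/2$, which is precisely the hypothesis. Hence for a.e.\ $t\in(0,T)$,
\begin{equation*}
\|\partial_t^j J(t,\cdot,\cdot;\mu)\|_{W_{\infty,+}^k} \leq c\|\partial_t^j J(t,\cdot,\cdot;\mu)\|_{\widetilde H_+^{s-3j+1/2}}.
\end{equation*}

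Finally, squaring and integrating this pointwise inequality over $t\in(0,T)$ yields \eqref{2.42}. The only minor subtlety is justifying the embedding on $\widetilde H_+^{s-3j+1/2}$ when this space is defined via boundary-adapted closure rather than standard $H^m_+$, but since $\widetilde H_+^m$ is by construction a closed subspace of $H^m_+$ (compatible with the relevant boundary conditions in $y$), the classical Sobolev embedding on $\Sigma_+$ (provable by extension across $x=0$ and across $y=0,L$, for instance) applies verbatim. For non-integer exponents $s-3j+1/2$, the embedding is obtained by interpolation, exactly as in the proof of Lemma~\ref{L2.8}. No step presents a genuine obstacle; the corollary is essentially a Sobolev-embedding corollary of Lemma~\ref{L2.8}.
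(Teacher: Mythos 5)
Your proposal is correct and follows exactly the paper's own route: apply estimate \eqref{2.37} from Lemma~\ref{L2.8} and then the Sobolev embedding $H^{k+1+\varepsilon}_+\subset W_{\infty,+}^k$, with the hypothesis $3j+k<s-1/2$ supplying precisely the needed positive $\varepsilon=s-3j+1/2-(k+1)$. The extra remarks on the boundary-adapted spaces $\widetilde H^m_+$ and on interpolation for non-integer exponents are fine but not needed beyond what the paper already takes for granted.
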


\begin{proof}
Estimate \eqref{2.42} obviously follows from \eqref{2.37} and the well-known embedding $H^{k+1+\varepsilon}_+\subset W_{\infty,+}^k$.
\end{proof}

\begin{lemma}\label{L2.9}
Let $\mu\in \widetilde H^{s/3,s}(B)$ for certain $s\in\mathbb R$. Then the function $J(t,x,y;\mu)$ is infinitely differentiable for $x>0$, $(t,y)\in \overline{B}$ and satisfies equation \eqref{2.1}, where $f\equiv 0$. Moreover, for any $T>0$, $x_0>0$ and $j, n$
\begin{equation}\label{2.43}
\sup\limits_{x\geq x_0} \|\partial_x^n J(\cdot,x,\cdot;\mu)\|_{\widetilde H^{j, 3j}(B_T)} \leq c(T,x_0,n,j,s) 
\|\mu\|_{\widetilde H^{s/3,s}(B)}.
\end{equation} 
\end{lemma}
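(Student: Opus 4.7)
The plan is to work directly from the spectral representation~\eqref{2.32} and exploit the exponential decay $e^{r_0(\theta,l)x}$ guaranteed for $x>0$ by Lemma~\ref{L2.5}.

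First I would verify the equation at the mode level. For each fixed $\theta$ and $l$ the mode $e^{i\theta t}\psi_l(y)e^{r_0(\theta,l)x}$ satisfies
$$(\partial_t+b\partial_x+\partial_x^3+\partial_x\partial_y^2)\bigl[e^{i\theta t}\psi_l(y)e^{r_0(\theta,l)x}\bigr]=\bigl(i\theta+br_0+r_0^3-\lambda_l r_0\bigr)e^{i\theta t}\psi_l(y)e^{r_0(\theta,l)x}=0,$$
by the very definition \eqref{2.26} of $r_0(\theta,l)$. Hence, once the formal differentiation under the sum and integral in \eqref{2.32} is justified, $J$ solves \eqref{2.1} with $f\equiv 0$. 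Justification of the differentiation and the smoothness claim will follow from the uniform bounds proved below.

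For the estimate \eqref{2.43}, $\partial_x^n$ applied to \eqref{2.32} simply inserts the factor $r_0(\theta,l)^n$. Since $\widetilde H^{j,3j}(B_T)$ is a restriction space, it suffices to bound the corresponding norm on all of $B$, which by the definition preceding~\eqref{1.10}--\eqref{1.11} (with $s=3j$) is
$$\|\partial_x^n J(\cdot,x,\cdot;\mu)\|_{\widetilde H^{j,3j}(B)}^2=\sum_{l=1}^{+\infty}\int_{\mathbb R}(|\theta|^{2/3}+l^2)^{3j}\bigl|r_0(\theta,l)\bigr|^{2n}e^{2p(\theta,l)x}\bigl|\widehat\mu(\theta,l)\bigr|^2\,d\theta.$$
Using \eqref{2.28} to absorb $|r_0|^{2n}\le c(|\theta|^{2/3}+l^2+c_1)^n$, I would split the sum/integral into the \textbf{good} regime where $p(\theta,l)\le-c_0(|\theta|^{1/3}+l)$ (this holds off a compact set of $(\theta,l)$ by \eqref{2.29}--\eqref{2.30}) and the \textbf{bad} regime $p(\theta,l)\equiv 0$ from \eqref{2.31}, which concerns only finitely many $l$ (those with $\lambda_l<b$) and $|\theta|\le 2((b-\lambda_l)/3)^{3/2}$.

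On the good regime, for $x\ge x_0>0$ I would write $e^{2p(\theta,l)x}=e^{2p(\theta,l)x_0}\cdot e^{2p(\theta,l)(x-x_0)}$ and use
$$(|\theta|^{2/3}+l^2+c_1)^{3j+n-s}\,e^{2p(\theta,l)x_0}\le C(x_0,j,n,s),\qquad e^{2p(\theta,l)(x-x_0)}\le 1,$$
which reduces the integrand to a constant multiple of $(|\theta|^{2/3}+l^2)^{s}|\widehat\mu(\theta,l)|^2$, summing to $\|\mu\|_{\widetilde H^{s/3,s}(B)}^2$. On the bad regime the integrand has support on a fixed compact set in $(\theta,l)$, so every positive power of $(|\theta|^{2/3}+l^2)$ is harmlessly bounded, and the weight $(|\theta|^{2/3}+l^2)^s$ being bounded above and below on that set yields control by $\|\mu\|_{\widetilde H^{s/3,s}(B)}^2$ again. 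Infinite differentiability for $x>0$ then follows by applying the same argument with extra factors $(i\theta)^{j'}\lambda_l^{m'}r_0^{n'}$, all of which are absorbed by the exponential on the good regime, giving absolutely convergent mode series for every $\partial_t^{j'}\partial_x^{n'}\partial_y^{2m'}J$ on $\{x\ge x_0\}$; this legitimizes the term-by-term computation used above to verify \eqref{2.1}.

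The main obstacle is the balancing of polynomial growth $(|\theta|^{2/3}+l^2)^{3j+n}$ against the decay $e^{p(\theta,l)x}$: the growth in both $\theta$ and $l$ must be beaten simultaneously, and only Lemma~\ref{L2.5} (together with the separation of the finite set of purely-imaginary-$r_0$ modes) provides the necessary lower bound $-p(\theta,l)\gtrsim|\theta|^{1/3}+l$. Once that separation is carried out, the rest is a direct Plancherel computation.
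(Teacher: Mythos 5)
Your argument is correct and is essentially the paper's own proof: the same Plancherel computation in which the polynomial factor $(|\theta|^{2/3}+l^2)^{(3j+n)/2}|r_0(\theta,l)|^{n}$ is absorbed by $e^{p(\theta,l)x_0}$ via \eqref{2.28} and the lower bound $-p(\theta,l)\gtrsim |\theta|^{1/3}+l$ coming from \eqref{2.29}--\eqref{2.30}, with the exceptional low-frequency modes treated separately (the paper splits $\mu=\mu_0+\mu_1$ as in \eqref{2.39} and disposes of the band-limited part via estimate \eqref{2.35}, whereas you bound the compact region directly, which is if anything more self-contained). One cosmetic caveat: your two regimes do not literally exhaust $(\theta,l)$-space, since for the finitely many $l$ with $\lambda_l<b$ there is a transitional band where $p<0$ but not bounded away from $0$; however your ``bad regime'' argument uses only compactness and $e^{2p(\theta,l)x}\le 1$, so it covers that band verbatim.
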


\begin{proof}
Without loss of generality one can assume that $\mu \in\widetilde{\EuScript S}(\overline{B})$.
By virtue of \eqref{2.32}
$$
\partial_x^n J(t,x,y;\mu) \equiv \sum\limits_{l=1}^{+\infty} \EuScript F^{-1}_t \Bigl[r_0^n(\theta,l) e^{r_0(\theta,l)x}\widehat\mu(\theta,l)\Bigr](t) \psi_l(y).
$$
Again divide $\mu$ into two parts as in \eqref{2.39}. Then by virtue of \eqref{2.28} and \eqref{2.38}
\begin{multline*}
\|\partial_x^n J(\cdot,x,\cdot;\mu_1)\|_{\widetilde H^{j, 3j}(B)}^2  \\ \leq 
c\sum\limits_{l=1}^{+\infty}\|(|\theta|^{2/3}+l^2)^{(n+3j)/2} e^{-c_0x_0(|\theta|^{1/3}+l)} \widehat\mu(\theta,l)\|^2_{L_2(\mathbb R^\theta)} \leq c(x_0)\|\mu\|_{\widetilde H^{s/3,s}(B)}^2.
\end{multline*}
For $\mu_0$ apply estimate \eqref{2.35} similarly to \eqref{2.41}.

Equality \eqref{2.1} for $u\equiv J$, $f\equiv 0$ follows from \eqref{2.26}, \eqref{2.27}.
\end{proof}

\begin{lemma}\label{L2.10}
Let $\mu\in L_2(B)$ and $\mu(t,y)=0$ for $t<0$, then the function $J(t,x,y;\mu)$ for any $T>0$ is a weak solution (from $L_2(\Pi_T^+)$) to problem \eqref{2.1} (for $f\equiv 0$), \eqref{2.2} (for $u_0\equiv 0$), \eqref{1.3}, \eqref{1.4}.
\end{lemma}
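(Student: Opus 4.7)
The plan is to prove the integral identity by smooth approximation. First I would pick $\mu_n\in\widetilde{\EuScript S}(\overline{B})$ with $\mu_n(t,y)\equiv 0$ for $t\leq t_n$, $t_n\to 0+$, converging to $\mu$ in $L_2(B)$ (standard mollification followed by a small time-shift). For each $n$, Lemma~\ref{L2.9} gives smoothness of $J_n:=J(\cdot,\cdot,\cdot;\mu_n)$ for $x>0$ and pointwise satisfaction of \eqref{2.1} (with $f\equiv 0$); Remark~\ref{R2.2} gives the boundary trace $J_n(t,0,y)\equiv\mu_n(t,y)$; and the $y$-boundary conditions \eqref{1.4} hold for $J_n$ automatically since every eigenfunction $\psi_l$ in \eqref{2.32} satisfies them.

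The crucial preparatory step is causality: $J_n(t,x,y)\equiv 0$ for $t\leq 0$. Because $\mu_n$ is supported in $\{t\geq t_n\}$, Paley--Wiener furnishes a holomorphic extension of $\widehat\mu_n(\cdot,l)$ to the lower half-plane $\Im\theta<0$ with $|\widehat\mu_n(\theta,l)|\leq C_{n,l}e^{t_n\Im\theta}$. By the Cardano formula the distinguished root $z_0(p,l)$ of \eqref{2.26} is analytic on $\Re p>0$ with $\Re z_0<0$ there, and its boundary value on the imaginary axis coincides with $r_0(\theta,l)$; hence $e^{r_0(\theta,l)x}$ admits a bounded holomorphic extension to $\Im\theta<0$ for every $x\geq 0$. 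Deforming the $\theta$-contour in \eqref{2.32} downward and sending $\Im\theta\to-\infty$, Cauchy's theorem together with dominated convergence gives $J_n(t,x,y)=0$ for $t<0$, and by continuity at $t=0$ as well.

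Next I would multiply \eqref{2.1} (for $u=J_n$, $f\equiv 0$) by a test function $\phi$ as in Definition~\ref{D1.1} and integrate by parts over $\Pi_T^+$. The manipulations are justified by the smoothness of $J_n$, the $L_2(\Pi_T^+)$-bound supplied by Lemma~\ref{L2.8} (which in particular gives $J_n\in L_2(0,T;\widetilde H_+^{1/2})$), the $L_2(0,T;\widetilde H_+^2)$-regularity of $\phi$, and a spatial cut-off $\eta(R-x)$ sent to $R\to+\infty$ to dispose of the $x=+\infty$ contributions. The $t=0$ trace of $J_n$ vanishes by causality, the $t=T$ trace of $\phi$ by assumption, the $y$-boundary terms case by case in \eqref{1.4} by the matching conditions built into $\widetilde H_+^2$ and $\psi_l$; and of the three $x=0$ traces arising from integrating $\iiint J_{n,xxx}\phi$ by parts three times, only $J_n(\cdot,0,\cdot)\phi_{xx}(\cdot,0,\cdot)=\mu_n\phi_{xx}\big|_{x=0}$ survives, since $\phi\big|_{x=0}=\phi_x\big|_{x=0}=0$. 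This yields the linear analog of \eqref{1.12} for $u=J_n$, $\mu=\mu_n$, $u_0\equiv 0$, $f\equiv 0$.

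The passage $n\to\infty$ is then routine: Lemma~\ref{L2.8} with $s=0$, $j=0$ gives $\|J_n-J\|_{L_2(0,T;\widetilde H_+^{1/2})}\leq c(T)\|\mu_n-\mu\|_{L_2(B)}\to 0$, so $J_n\to J$ in $L_2(\Pi_T^+)$, which together with $\mu_n\to\mu$ in $L_2(B_T)$ transfers the identity to $(J,\mu)$. The chief technical obstacle is the causality step, since $r_0(\theta,l)$ is only continuous (not analytic) across the cusps $\theta=\pm 2((b-\lambda_l)/3)^{3/2}$ that occur when $\lambda_l<b$; however one only needs the analyticity of $z_0(p,l)$ on the open half-plane $\Re p>0$, guaranteed by Cardano, rather than analyticity of $r_0$ itself, to make the contour deformation rigorous.
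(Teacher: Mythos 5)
Your proposal is correct, but it takes a genuinely different route from the paper's. The paper does not verify the weak identity for $J$ directly: for $\mu\in C_0^\infty(B_+)$ it takes the smooth solution $u$ already produced by the Galerkin construction of Lemma~\ref{L2.2}, applies the Laplace--Fourier transform in $(t,y)$ with $\Re p=\varepsilon>0$, solves the resulting third-order ODE in $x$ (the decay of $\widetilde u$ as $x\to+\infty$ selects the root $z_0(p,l)$ of \eqref{2.26}), inverts and lets $\varepsilon\to+0$ to conclude $u\equiv J(\cdot,\cdot,\cdot;\mu)$; the general case then follows by approximation, estimate \eqref{2.37} with $s=0$, and the uniqueness Lemma~\ref{L2.1}. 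In that argument causality and the zero initial trace of $J$ are inherited from the Galerkin solution for free. You go in the opposite direction: starting from formula \eqref{2.32} you use Lemmas~\ref{L2.6}--\ref{L2.9} for regularity and the pointwise equation, verify \eqref{1.12} by integration by parts, and must therefore prove causality yourself, which you do by Paley--Wiener and deformation of the $\theta$-contour into $\Im\theta<0$ --- i.e.\ into $\Re p>0$, precisely the half-plane where $z_0(p,l)$ lives. Your version dispenses with Lemmas~\ref{L2.1} and~\ref{L2.2} at the price of needing analyticity of $z_0(\cdot,l)$ on $\Re p>0$, which the paper never states; it does hold, but the clean justification is simplicity of the root plus the implicit function theorem rather than the Cardano radicals themselves (which carry branch cuts): a double root would satisfy $3z^2=\lambda_l-b$, hence be real or purely imaginary, and a real negative double root forces $p<0$, so $z_0$ stays simple throughout $\Re p>0$. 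Your treatment of the cusps of $r_0$ on the real axis (interior analyticity plus continuity up to the boundary suffices for the contour shift) is the right one. In short, the two proofs are the two sides of the same Laplace-transform identity: the paper derives the formula for $J$ from the time-domain solution, you derive the time-domain properties from the formula; yours is self-contained on the frequency side, the paper's leans on the existence and uniqueness theory already built.
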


\begin{proof}
First let $\mu\in C_0^\infty(B_+)$. Consider the smooth solution $u(t,x,y)$to the considered problem constructed in Lemma~\ref{L2.2}. For any $p=\varepsilon+i\theta$, 
where $\varepsilon>0$, define the Laplace--Fourier transform-coefficients
$$
\widetilde u(p,x,l) \equiv \int_{\mathbb R_+}\!\! \int_0^L e^{-pt} \psi_l(y) u(t,x,y)\,dydt.
$$
The function $\widetilde u(p,x,l)$ solves a problem
\begin{gather*}
p\widetilde u(p,x,l)+b\widetilde u_x(p,x,l)+\widetilde u_{xxx}(p,x,l)-\lambda_l \widetilde u_x(p,x,l)=0,\\
\widetilde u(p,0,l)=\widetilde \mu(p,l)\equiv \int_{\mathbb R_+}\!\! \int_0^L
e^{-pt}\psi_l(y)\mu(t,y)\,dydt,
\end{gather*}
whence, since $\widetilde u(p,x,l)\to 0$ as $x\to +\infty$, it follows, that
$$
\widetilde u(p,x,l)=\widetilde \mu(p,l)e^{z_0(p,l)x},
$$
where $z_0(p,l)$ is defined in \eqref{2.26}.
Using the formula of inversion of the Laplace transform we find, that the Fourier coefficients of the function $u$ are the following:
$$
\widehat u(t,x,l) = e^{\varepsilon t} \EuScript F_t^{-1}\left[
\widetilde \mu(\varepsilon+i\theta,l)e^{z_0(\varepsilon+i\theta,l)x}\right](t)
$$
and, therefore,
$$
u(t,x,y)= \sum\limits_{l=1}^{+\infty} e^{\varepsilon t} \EuScript F_t^{-1}\left[
\widetilde \mu(\varepsilon+i\theta,l)e^{z_0(\varepsilon+i\theta,l)x}\right](t)\psi_l(y).
$$
Passing to the limit as $\varepsilon\to+0$, we derive that $u(t,x,y) \equiv J(t,x,y;\mu)$.

In the general case approximate the function $\mu$ by smooth ones, pass to the limit on the basis of estimate \eqref{2.37} for $s=0$  (note, that this estimate is superfluous, the corresponding more weak estimate in $L_2(\Pi_T^+)$ is sufficient) and use the uniqueness result. 
\end{proof}

\begin{corollary}\label{C2.2}
Let $u_0\in L_{2,+}$, $\mu\in \widetilde H^{1/3,1}(B_T)$, $f\in L_2(0,T;L_{2,+})$ for certain $T>0$. Then there exists a unique solution to problem \eqref{2.1}, \eqref{1.2}--\eqref{1.4}, such that $u\in C([0,T];L_{2,+})$, $u,u_x \in C_b(\overline{\mathbb R}_+;L_2(B_T))$, given by a formula
\begin{equation}\label{2.44}
u(t,x,y)= S(t,x,y;u_0) +K(t,x,y;f) +J(t,x,y;\mu-S(0,\cdot,\cdot;u_0)-K(0,\cdot,\cdot;f)),
\end{equation}
where for the construction of the functions $S$ and $K$ the functions $u_0$ and $f$ are extended somehow in the same classes for $x<0$ and for the construction of the function $J$ the function $\mu-S(0,\cdot,\cdot;u_0)-K(0,\cdot,\cdot;f)$ is extended by zero for $t<0$ and somehow in the same class as $\mu$ for $t>T$.
\end{corollary}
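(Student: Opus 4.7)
The plan is to read formula \eqref{2.44} as a definition and verify term by term that it produces a weak solution with the stated regularity; uniqueness then follows at once from Lemma~\ref{L2.1}, since $C([0,T];L_{2,+})\hookrightarrow L_2(\Pi_T^+)$. Fix any extensions $\tilde u_0\in L_2(\Sigma)$ of $u_0$ and $\tilde f\in L_2(0,T;L_2(\Sigma))$ of $f$ (for instance by zero for $x<0$); the choice is immaterial in view of the uniqueness just mentioned. Set $v(t,x,y):=S(t,x,y;\tilde u_0)+K(t,x,y;\tilde f)$, which is a weak solution on $(0,T)\times\Sigma$ with data $\tilde u_0,\tilde f$. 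Lemma~\ref{L2.3} with $s=0$ and Lemma~\ref{L2.4} with $s=0$ imply that the trace $v(\cdot,0,\cdot)$ belongs to $\widetilde H^{1/3,1}(B_T)$, so $\nu:=\mu-v(\cdot,0,\cdot)\in \widetilde H^{1/3,1}(B_T)$.

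Next I would extend $\nu$ by zero to $t<0$ and arbitrarily (in the same class) to $t>T$, producing $\tilde\nu\in \widetilde H^{1/3,1}(B)$ with $\tilde\nu\equiv 0$ for $t<0$; the zero extension across $t=0$ is legitimate because the time-smoothness index $1/3$ is strictly below the critical exponent $1/2$. Define $u:=v+J(\cdot,\cdot,\cdot;\tilde\nu)$. By Remark~\ref{R2.2}, $J(t,0,y;\tilde\nu)=\tilde\nu(t,y)$, hence $u(t,0,y)=\mu(t,y)$ on $B_T$. Since $\tilde\nu$ vanishes for $t<0$, Lemma~\ref{L2.10} (applied first to an approximating sequence $\mu_n\in C_0^\infty(B_+)$ converging to $\tilde\nu$ in $\widetilde H^{1/3,1}(B)$ and then passed to the limit via \eqref{2.33} and Lemma~\ref{L2.1}) implies that $J(\cdot,\cdot,\cdot;\tilde\nu)$ is a causal weak solution, in particular $J\equiv 0$ for $t<0$; combined with the continuity $J\in C_b(\mathbb R^t;L_{2,+})$ from Lemma~\ref{L2.7} at $s=0$, this forces $J\big|_{t=0}=0$ in $L_{2,+}$, and therefore $u\big|_{t=0}=\tilde u_0\big|_{\Sigma_+}=u_0$. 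The $y$-boundary conditions \eqref{1.4} are automatically satisfied because $S$, $K$ and $J$ are all built on the eigenfunction basis $\{\psi_l\}$.

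The regularity assertions are verified term by term. For $v$: $S\in C_b(\mathbb R^t;L_2(\Sigma))$ and $K\in C([0,T];L_2(\Sigma))$ restrict to give $v\in C([0,T];L_{2,+})$, while Lemmas~\ref{L2.3}, \ref{L2.4} and Remark~\ref{R2.1} yield $v, v_x\in C_b(\mathbb R^x;L_2(B_T))$. For $J$: Lemma~\ref{L2.7} with $s=0$ gives $J\in C_b(\mathbb R^t;L_{2,+})$, and Lemma~\ref{L2.6} with $s=1$ and $n=0,1$ provides $J, J_x\in C_b(\overline{\mathbb R}_+^x;L_2(B))$. Adding the two blocks delivers the regularity in the statement.

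I anticipate the main delicate point to be the application of Lemma~\ref{L2.10} to the non-smooth datum $\tilde\nu\in \widetilde H^{1/3,1}(B)$: the lemma is proved only for $\mu\in C_0^\infty(B_+)$ and was extended to general $L_2$ data by approximation together with the $L_2$-uniqueness of Lemma~\ref{L2.1}. One must repeat this step in our stronger topology, ensuring that both the weak-equation identity and the causal vanishing for $t<0$ (used above to identify the initial trace of $J$) survive the limit. All remaining verifications reduce to routine inequality chasing based on the cited lemmas.
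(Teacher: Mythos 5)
Your argument is correct and is precisely the route the paper takes: the paper's proof is a one-line citation of \eqref{2.15}, \eqref{2.16}, \eqref{2.22}, \eqref{2.23}, \eqref{2.25}, \eqref{2.34}, \eqref{2.35} and Lemma~\ref{L2.10}, i.e.\ exactly the term-by-term verification of formula \eqref{2.44} (with $S(0,\cdot,\cdot;u_0)$, $K(0,\cdot,\cdot;f)$ read as the traces at $x=0$, as you do) plus uniqueness from Lemma~\ref{L2.1}. Your flagged concern about extending Lemma~\ref{L2.10} is already absorbed by its statement, which covers general $\mu\in L_2(B)$ vanishing for $t<0$.
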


\begin{proof}
This assertion directly succeeds from \eqref{2.15}, \eqref{2.16}, \eqref{2.22}, \eqref{2.23}, \eqref{2.25}, \eqref{2.34}, \eqref{2.35} and Lemma~\ref{L2.10}.
\end{proof}

We introduce certain additional function space. Let $\widetilde{\EuScript S}_{exp}(\overline{\Sigma}_+)$ denotes a space of infinitely smooth functions $\varphi(x,y)$ in $\overline{\Sigma}_{+}$,  such that $e^{n x}|\partial^\alpha\varphi(x,y)|\leq c(n,\alpha)$ for any $n$, multi-index $\alpha$, $(x,y)\in \overline{\Sigma}_+$ and $\partial_y^{2m}\varphi\big|_{y=0} =\partial_y^{2m}\varphi\big|_{y=L}=0$ in the case a), $\partial_y^{2m+1}\varphi\big|_{y=0} =\partial_y^{2m+1}\varphi\big|_{y=L}=0$ in the case b), $\partial_y^{2m}\varphi\big|_{y=0} =\partial_y^{2m+1}\varphi\big|_{y=L}=0$ in the case c), $\partial_y^{m}\varphi\big|_{y=0} =\partial_y^{m}\varphi\big|_{y=L}=0$ in the case d) for any $m$.

Let $\widetilde \Phi_0(x,y) \equiv u_0(x,y)$
and for $j\geq 1$
$$
\widetilde\Phi_j(x,y) \equiv \partial^{j-1}_t f(0,x,y)-
(b\partial_x+\partial_x^3+\partial_x\partial_y^2)\widetilde\Phi_{j-1}(x,y). 
$$

\begin{lemma}\label{L2.11}
Let $u_0\in \widetilde{\EuScript S}(\overline{\Sigma})\cap \widetilde{\EuScript S}_{exp}(\overline{\Sigma}_+)$, $f\in C^\infty\bigl([0,2T]; \widetilde{\EuScript S}(\overline{\Sigma})\cap\widetilde{\EuScript S}_{exp}(\overline{\Sigma}_+)\bigr)$, $\mu\in \widetilde{\EuScript S}(\overline{B}_+)$ and $\partial_t^j \mu(0,y) \equiv \widetilde\Phi_j(0,y)$ for any $j$.
Then there exists a unique solution to problem \eqref{2.1}, \eqref{1.2}--\eqref{1.4} $u\in C^\infty\bigl([0,T]; \widetilde{\EuScript S}_{exp}(\overline{\Sigma}_+)\bigr)$.
\end{lemma}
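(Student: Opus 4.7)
The plan is to reduce to a problem with homogeneous boundary data at $x=0$, construct a smooth solution by a Galerkin scheme along the lines of Lemma~\ref{L2.2}, and then upgrade it to the exponentially decaying class $\widetilde{\EuScript S}_{exp}(\overline\Sigma_+)$ via weighted energy estimates. Uniqueness is already provided by Lemma~\ref{L2.1}.

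For the reduction, set $w(t,x,y) \equiv u(t,x,y) - \mu(t,y)\eta(1-x)$, as in the proof of Lemma~\ref{L2.2}. Then the problem for $u$ is equivalent to the same equation \eqref{2.1} for $w$ with right-hand side
$$\widetilde f(t,x,y) \equiv f - \mu_t\eta(1-x) - b\mu\eta'(1-x) - \mu\eta'''(1-x) - \mu_{yy}\eta'(1-x),$$
modified initial data $\widetilde u_0(x,y) \equiv u_0(x,y) - \mu(0,y)\eta(1-x)$, the homogeneous boundary condition $w\big|_{x=0}=0$, and the same $y$-boundary conditions as before. Since $\mu\in\widetilde{\EuScript S}(\overline B_+)$ respects the $y$-conditions and $\eta(1-x)$ has compact $x$-support, both $\widetilde u_0$ and $\widetilde f$ remain in the classes $\widetilde{\EuScript S}(\overline\Sigma)\cap\widetilde{\EuScript S}_{exp}(\overline\Sigma_+)$ with smooth $t$-dependence on $[0,T]$. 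The recursive definition of $\widetilde\Phi_j$ together with the compatibility hypothesis $\partial_t^j\mu(0,y)\equiv\widetilde\Phi_j(0,y)$ then implies $\partial_t^j w(0,x,y)\big|_{x=0}\equiv 0$ for every $j$.

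For existence of $w$, apply the Galerkin scheme of Lemma~\ref{L2.2}: take $w_k(t,x,y)=\sum_{j,l=1}^k c_{kjl}(t)\varphi_j(x)\psi_l(y)$ with the same basis, and set $w_k\big|_{t=0}$ equal to the Galerkin projection of $\widetilde u_0$. Multiplying the variational identity by $2c_{kim}$ and summing gives the basic $L_{2,+}$ estimate; differentiating the Galerkin system in $t$ and using the compatibility to identify each $\partial_t^j w_k(0,\cdot,\cdot)$ consistently yields, exactly as in \eqref{2.5}--\eqref{2.9}, uniform bounds on $\partial_t^j\partial_y^n w_k$ in $L_\infty(0,T; L_{2,+})$ for every $j,n$. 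Passing to the limit and bootstrapping via the equation $w_{xxx}=-w_t-bw_x-w_{xyy}+\widetilde f$ together with Lemma~\ref{L1.2} yields $\partial_t^j\partial_y^n\partial_x^{3m}w\in C_b([0,T];L_{2,+})$ for all $j,n,m$, hence full smoothness in $(t,x,y)$.

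For the exponential decay, repeat the preceding estimates with weight $e^{2nx}$ for each integer $n\geq 1$: multiplying the equation for $\partial_t^j\partial_y^{n'}\partial_x^{3m}w$ by $2e^{2nx}$ times itself and integrating over $\Sigma_+$, the dispersive terms yield, after integration by parts and using $w\big|_{x=0}=0$ and the $y$-boundary conditions, nonnegative boundary contributions at $x=0$ and positive dissipative contributions $\iint e^{2nx}(\cdot)_x^2\,dxdy$ and $\iint e^{2nx}(\cdot)_y^2\,dxdy$, while the weight commutators generate lower-order terms with constants depending on $n$; these are absorbed by Gronwall, using that the data belong to $L_{2,+}^{e^{2nx}}$ by the $\widetilde{\EuScript S}_{exp}(\overline\Sigma_+)$-assumption. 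This yields $w\in C^\infty([0,T];\widetilde{\EuScript S}_{exp}(\overline\Sigma_+))$, and hence $u=w+\mu\eta(1-x)$ lies in the required class. The main obstacle is the bookkeeping of the weight commutators, which proliferate when arbitrarily many derivatives are taken; fortunately $n$ and $T$ are fixed throughout each such estimate, so the $n$-dependent constants are harmless and the scheme closes inductively on the order of differentiation.
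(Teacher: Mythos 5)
Your route (reduction via $\mu(t,y)\eta(1-x)$, Galerkin in $t,y$-derivatives, bootstrap in $x$ through the equation, then weighted energy estimates) is genuinely different from the paper's, which instead represents the solution explicitly as $S(t,x,y;u_0)+K(t,x,y;f)+J(t,x,y;\widetilde\mu)$ with $\widetilde\mu=(\mu-w\big|_{x=0})\eta(2-t/T)$ extended by zero for $t<0$ (this is where the compatibility conditions $\partial_t^j\mu(0,y)=\widetilde\Phi_j(0,y)$ are used), reads off $\partial_t^ju\in C([0,T];\widetilde H^k_+)$ from Lemma~\ref{L2.7}, and then obtains the exponential decay by observing that $u\eta(x-1)$ solves a whole-strip problem to which the results of \cite{BF13} apply. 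Your first two steps are sound in outline (modulo the standard care needed to choose Galerkin projections of $\widetilde u_0$ so that $\partial_t^jw_k(0,\cdot,\cdot)$ stay bounded, which is where the compatibility conditions enter on your route).

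There is, however, a genuine gap in your final paragraph. You propose to run the weighted energy identity directly on $v\equiv\partial_t^j\partial_y^{n'}\partial_x^{3m}w$ and assert that the boundary contributions at $x=0$ are nonnegative "using $w\big|_{x=0}=0$". This is only true for $m=0$: the condition $w\big|_{x=0}=0$ gives $\partial_t^j\partial_y^{n'}w\big|_{x=0}=0$, but it gives no information on $\partial_x^{3m}w\big|_{x=0}$ for $m\geq 1$. For such $v$ the term $2\iint v_{xxx}v\,e^{2nx}\,dxdy$ produces, after integration by parts, the boundary contribution $\int_0^L\bigl(v_x^2-2v_{xx}v\bigr)\big|_{x=0}\,dy$, and the piece $-2\iint_{\{x=0\}}v_{xx}v$ involves the trace of $\partial_x^{3m+2}w$, two orders above anything controlled at that stage; it has no sign and cannot be absorbed. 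This is precisely the reason the paper never performs energy estimates on pure $x$-derivatives on the half-strip. The repair is available inside your own argument: restrict the weighted energy estimates to $\partial_t^j\partial_y^{n'}w$ (whose traces at $x=0$ do vanish), and then recover all $x$-derivatives in the weighted classes from the identity $\partial_y^{n'}w_{xxx}=\partial_y^{n'}(\widetilde f-w_t)-\partial_x\bigl(b\partial_y^{n'}w+\partial_y^{n'+2}w\bigr)$ together with Lemma~\ref{L1.2} (which is stated for arbitrary admissible weights, hence for $e^{2nx}$), iterating in $m$ exactly as you do in the unweighted step. With that modification your scheme closes; as written, the claimed sign of the boundary terms is false.
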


\begin{proof}
Let $w(t,x,y)=S(t,x,y;u_0) +K(t,x,y;f)$ be the solution to initial-boundary value problem \eqref{2.1}, \eqref{1.2}, \eqref{1.4} from the space $u\in C^\infty\bigl([0,2T];\widetilde{\EuScript S}(\overline{\Sigma})\cap\widetilde{\EuScript S}_{exp}(\overline{\Sigma}_+)\bigr)$ (see, for example, \cite{BF13}).

Let $\widetilde\mu(t,y)\equiv\bigl(\mu(t,y) -w(t,0,y)\bigr)\eta(2-t/T)$. Extend this function to the whole strip $B$ by zero for $t< 0$. Such an extension can be performed by virtue of the compatibility conditions on the line $t=0, x=0$. Then $\widetilde\mu\in \widetilde{\EuScript S}(\overline{B})$.

Then formula \eqref{2.44} provides the solution to the considered problem such that $\partial_t^j u\in  C([0,T],\widetilde H^k_+)$ for all $j$ and $k$ (see Lemma~\ref{L2.7}). 

Finally, let $v(t,x,y)\equiv u(t,x,y)\eta(x-1)$. The function $v$ solves an initial value problem in a strip $\Sigma$ of \eqref{2.1}, \eqref{1.2} type, where $f$, $u_0$ are substituted by by corresponding functions $F$, $v_0$ from the same classes and \cite{BF13} provides that $v\in C^\infty\bigl([0,T];\widetilde{\EuScript S}_{exp}(\overline{\Sigma}_+)\bigr)$.
\end{proof}

\begin{remark}\label{R2.3}
In further lemmas of this section all intermediate argument is performed for smooth solutions constructed in Lemma~\ref{L2.11} with consequent pass to the limit on the basis of obtained estimates due to linearity of the problem. 
\end{remark}

\begin{lemma}\label{L2.12}
Let $\mu\equiv 0$, $\rho(x)$ be an admissible weight function, such that $\rho'(x)$ is also an admissible weight function, $u_0\in L_{2,+}^{\rho(x)}$, $f\equiv f_0 +f_{1x} +f_2$, where   $f_0\in L_1(0,T;L_{2,+}^{\rho(x)})$, $f_1\in L_2(0,T;L_{2,+}^{\rho^2(x)/\rho'(x)})$, $f_2\rho^{3/4}(x)/(\rho'(x))^{1/4} \in L_{4/3}(\Pi_T^+)$. Then there exists a (unique) weak solution to problem \eqref{2.1}, \eqref{1.2}--\eqref{1.4} from the space $X^{\rho(x)}(\Pi_T^+)$ and a function $\nu\in L_2(B_T)$, such that for any function $\phi\in L_2(0,T;\widetilde H_+^2)$, $\phi_t, \phi_{xxx}, \phi_{xyy}\in L_2(\Pi_T^+)$, $\phi\big|_{t=T}=0$, $\phi\big|_{x=0} =0$, the following equality holds:
\begin{multline}\label{2.45}
\iiint_{\Pi_T^+}\Bigl[u(\phi_t+b\phi_x+\phi_{xxx}+\phi_{xyy}) +(f_0+f_2)\phi -f_1\phi_x\Bigr]\,dxdydt \\
+\iint u_0\phi\big|_{t=0}\,dxdy -
\iint_{B_T} \nu\phi_x\big|_{x=0}\,dydt =0.
\end{multline}
Moreover, for $t\in (0,T]$
\begin{multline}\label{2.46}
\|u\|_{X^{\rho(x)}(\Pi_t^+)} +\|\nu\|_{L_2(B_t)}\leq c(T)\Bigl(\|u_0\|_{L_{2,+}^{\rho(x)}} +\|f_0\|_{L_1(0,t;L_{2,+}^{\rho(x)})} \\ +\|f_1\|_{L_2(0,t;L_{2,+}^{\rho^2(x)/\rho'(x)})} +\|f_2\rho^{3/4}(x)/(\rho'(x))^{1/4}\|_{L_{4/3}(\Pi_t^+)}\Bigr),
\end{multline}
\begin{multline}\label{2.47}
\iint u^2(t,x,y)\rho(x)\,dxdy + \int_0^t \!\! \iint (3u_x^2 +u_y^2 -bu^2)\rho'(x)\,dxdyd\tau + \rho(0)\iint_{B_t} \nu^2\,dyd\tau \\ =
\iint u_0^2\rho(x)\,dxdy +\int_0^t \!\! \iint u^2\rho'''(x)\,dxdyd\tau +
2\int_0^t \!\! \iint (f_0+f_2)u\rho(x)\,dxdyd\tau  \\
-2\int_0^t \!\! \iint f_1\bigl(u\rho(x)\bigr)_x\,dxdyd\tau.
\end{multline}
If $f_1=f_2\equiv 0$, then in equality \eqref{2.47} one can put $\rho\equiv 1$.
\end{lemma}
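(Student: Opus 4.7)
The plan is to first derive the energy identity \eqref{2.47} and the a~priori bound \eqref{2.46} for smooth solutions, and then to pass to the general case by approximation.

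First I approximate the data $(u_0,f_0,f_1,f_2)$ (with $\mu\equiv 0$) by smooth ones satisfying the hypotheses of Lemma~\ref{L2.11}; this produces classical solutions $u$ with $\nu\equiv u_x\big|_{x=0}\in C_b([0,T];L_2(0,L))$. Multiplying equation \eqref{2.1} by $2u\rho(x)$ and integrating over $\Sigma_+$, the integrations by parts, using $u\big|_{x=0}=0$, $u_y\big|_{x=0}=0$ (the latter because $\mu\equiv 0$), the decay at $+\infty$, and the fact that each of the boundary conditions \eqref{1.4} annihilates $[u_{xy}u]_{y=0}^{y=L}$, yield
\[
2\iint u_{xxx}u\rho\,dxdy = \rho(0)\!\int_0^L\! u_x^2\big|_{x=0}\,dy + 3\iint u_x^2\rho'\,dxdy - \iint u^2\rho'''\,dxdy,
\]
\[
2\iint u_{xyy}u\rho\,dxdy = \iint u_y^2\rho'\,dxdy,\qquad 2b\iint u_x u\rho\,dxdy = -b\iint u^2\rho'\,dxdy,
\]
while splitting $f=f_0+f_{1x}+f_2$ and integrating the middle piece by parts in $x$ (its boundary term vanishes as $u\big|_{x=0}=0$) gives $2\iint(f_0+f_2)u\rho-2\iint f_1(u\rho)_x$. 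Summing these contributions and integrating in $t\in(0,T]$ yields \eqref{2.47}; the analogous computation with $\rho\equiv 1$ (and $f_1=f_2\equiv 0$) reproduces \eqref{1.5}.

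Next I convert \eqref{2.47} into \eqref{2.46}. Since $\rho'$ and $\rho'''$ are admissible and $\rho'\leq c\rho$, the contributions $|b|\iint u^2\rho'$ and $|\iint u^2\rho'''|$ close by Gronwall. The $f_0$-term is controlled by Cauchy--Schwarz in space and H\"older in $t$. For the $f_1$-term, writing $(u\rho)_x=u_x\rho+u\rho'$, the $u_x\rho$-piece gives
\[
\Big|2\!\iint f_1 u_x\rho\,dxdyd\tau\Big|\leq \tfrac{1}{2}\!\iint u_x^2\rho'\,dxdyd\tau + 2\|f_1\|_{L_2(0,t;L_{2,+}^{\rho^2/\rho'})}^2,
\]
with the first summand absorbable into the left side of \eqref{2.47}; the $u\rho'$-piece is handled analogously using $\rho'\leq c\rho$. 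The $f_2$-term is the crucial point. H\"older in $(x,y)$ with conjugate exponents $4/3$ and $4$ gives
\[
\Big|2\!\iint f_2 u\rho\,dxdy\Big|\leq c\|f_2\rho^{3/4}/(\rho')^{1/4}\|_{L_{4/3}(\Sigma_+)}\|u\rho^{1/4}(\rho')^{1/4}\|_{L_{4,+}},
\]
and Lemma~\ref{L1.1} applied with $\rho_1\equiv\rho'$, $\rho_2\equiv\rho$ (admissible since $\rho'\leq c\rho$), $q=4$ and hence $s=1/4$, bounds the last factor by $c\||Du|(\rho')^{1/2}\|_{L_{2,+}}^{1/2}\|u\rho^{1/2}\|_{L_{2,+}}^{1/2}+c\|u\rho^{1/2}\|_{L_{2,+}}$. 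Two successive applications of Young's inequality absorb the resulting kinetic part into the left side of \eqref{2.47} and close Gronwall, producing \eqref{2.46}.

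By linearity, the estimate \eqref{2.46} applied to differences $u_n-u_m$ yields strong convergence $u_n\to u$ in $X^{\rho(x)}(\Pi_T^+)$ and $\nu_n\to\nu$ in $L_2(B_T)$. Each $u_n$ satisfies the variational identity \eqref{2.45} with $\nu$ replaced by $\nu_n$: multiplying \eqref{2.1} by a test function $\phi$ with $\phi\big|_{t=T}=0$, $\phi\big|_{x=0}=0$ and integrating by parts, the boundary contribution $u_n\phi_{xx}\big|_{x=0}$ vanishes because $u_n\big|_{x=0}=0$, but the $u_{nx}\phi_x\big|_{x=0}$ one survives (as only $\phi\big|_{x=0}=0$ is imposed here) and produces the term $-\iint\nu_n\phi_x\big|_{x=0}$. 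Passing to the limit gives \eqref{2.45}, and the identity \eqref{2.47} in the limit supplies the strong continuity of $u$ in $t$ with values in $L_{2,+}^{\rho(x)}$. Uniqueness follows directly from Lemma~\ref{L2.1}. The main obstacle is the $f_2$-estimate: the weights $\rho^{3/4}/(\rho')^{1/4}$ and the Lebesgue exponent $4/3$ are exactly those dictated by Lemma~\ref{L1.1} with $q=4$, and no other choice both matches the H\"older pairing and closes Gronwall.
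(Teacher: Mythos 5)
Your proposal is correct and follows essentially the same route as the paper: the energy identity obtained by multiplying by $2u\rho$, the estimate of the $f_2$-term via H\"older with exponents $4/3$, $4$ and Lemma~\ref{L1.1} with $\rho_1\equiv\rho'$, $\rho_2\equiv\rho$, $q=4$ (the paper's \eqref{2.49}), the splitting $(u\rho)_x=u_x\rho+u\rho'$ for the $f_1$-term (the paper's \eqref{2.50}), and the passage to the limit from the smooth solutions of Lemma~\ref{L2.11} as prescribed in Remark~\ref{R2.3}. You merely spell out the integrations by parts and the limit argument that the paper dismisses as ``standard.''
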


\begin{proof}
Multiplying \eqref{2.1} by $2u(t,x,y)\rho(x)$ and integrating over $\Sigma_+$ we find that
\begin{multline}\label{2.48}
\frac d{dt} \iint u^2\rho\,dxdy +\rho(0)\int_0^L u_{x}^2\big|_{x=0}\,dy +
\iint (3u_{x}+u_{y}^2 -bu^2)\rho' \,dxdy \\ = \iint u^2 \rho'''\,dxdy + 
2\iint (f_0+f_2)u\rho\,dxdy -2\iint f_1(u\rho)_x\,dxdy.
\end{multline}
Note that
\begin{multline}\label{2.49}
\Bigl|\iint f_2u\rho\,dxdy\Bigr| \leq \|u(\rho'\rho)^{1/4}\|_{L_{4,+}} 
\|f_2\rho^{3/4}(\rho')^{-1/4}\|_{L_{4/3,+}} \\ \leq
c\Bigl[ \||Du|(\rho')^{1/2}\|_{L_{2,+}}^{1/2}\|u\rho^{1/2}\|_{L_{2,+}}^{1/2} +
\|u\rho^{1/2}\|_{L_{2,+}}\Bigr]\|f_2\rho^{3/4}(\rho')^{-1/4}\|_{L_{4/3,+}} \\ \leq
\varepsilon\iint |Du|^2\rho'\,dxdy +
c(\varepsilon)\|f_2\rho^{3/4}(\rho')^{-1/4}\|_{L_{4/3,+}}^{4/3}
\Bigl(\iint u^2\rho\,dxdy\Bigr)^{1/3} \\ +
c\|f_2\rho^{3/4}(\rho')^{-1/4}\|_{L_{4/3,+}}\Bigl(\iint u^2\rho\,dxdy\Bigr)^{1/2},
\end{multline}
\begin{multline}\label{2.50}
\Bigl|\iint f_1(u\rho)_x\,dxdy\Bigr| \leq c\|f_1\rho(\rho')^{-1/2}\|_{L_{2,+}}
\|(|u_{x}|+|u|)(\rho')^{1/2}\|_{L_{2,+}}\\ \leq \varepsilon
\iint \bigl(u_{x}^2+u^2\bigr)\rho'\,dxdy +
c(\varepsilon)\|f_1\|_{L_{2,+}^{\rho^2(x)/\rho'(x)}}^2,
\end{multline}
where $\varepsilon>0$ can be chosen arbitrarily small.
Equality \eqref{2.48} and inequalities \eqref{2.49}, \eqref{2.50} imply that that for smooth solutions
$$
\|u\|_{X^{\rho(x)}(\Pi_T^+)} + \|u_{x}\big|_{x=0}\|_{L_2(B_T)} \leq c.
$$

The end of the proof is standard.
\end{proof}

\begin{remark}\label{R2.4}
The method of construction of weak solution in Lemma~\ref{L2.13} via closure ensures that $u|_{x=0}=0$ in the trace sense (this fact can be also easily derived from equality \eqref{2.45}, since $u_x \in L_2((0,T)\times (0,x_0)\times (0,L))$ for certain $x_0>0$). Moreover, if it is known, in addition, that $u_x \in C_w([0,x_0];L_2(B_T))$ for certain $x_0>0$, then equality \eqref{2.45} yields that $u_x|_{x=0}=\nu$ (for example, one can put $\varphi \equiv x\eta(1-x/h)\omega(t,y)$ for $h>0$ and any $\omega\in C_0^\infty(B_T)$ and then tend $h$ to zero).
\end{remark}

\begin{lemma}\label{L2.13}
Let $\mu\equiv 0$, $\rho(x)$ be an admissible weight function, such that $\rho'(x)$ is also an admissible weight function, $u_0\in \widetilde H^{1,{\rho(x)}}_+$, $u_0\big|_{x=0}\equiv 0$, $f\equiv f_0+f_1$, where $f_0\in L_2(0,T;\widetilde H^{1,{\rho(x)}}_+)$, $f_1\in L_2(0,T;L_{2,+}^{\rho^2(x)/\rho'(x)})$. Then there exists a (unique) weak solution to problem \eqref{2.1}, \eqref{1.2}--\eqref{1.4}  from the space $X^{1,\rho(x)}(\Pi_T^+)$.
Moreover, for any $t\in (0,T]$
\begin{equation}\label{2.51}
\|u\|_{X^{1,\rho(x)}(\Pi_t^+)} 
\leq c(T) \Bigl(\|u_0\|_{\widetilde H^{1,\rho(x)}_+}+\|f_0\|_{L_2(0,t;\widetilde H^{1,\rho(x)}_+)}
+\|f_1\|_{L_2(0,t;L_{2,+}^{\rho^2(x)/\rho'(x)})}\Bigr),
\end{equation}
and for any smooth positive function $\gamma(t)$
\begin{multline}\label{2.52}
\gamma(t)\iint(u_x^2+u_y^2)\rho \,dxdy
+\int_0^t \gamma(\tau) \iint(3u_{xx}^2+4u^2_{xy}+u^2_{yy})\rho' \,dxdyd\tau \\ \leq
\gamma(0)\iint(u_{0x}^2+u_{0y}^2)\rho\,dxdy +
\int_0^t \gamma'(\tau) \iint(u_{x}^2+u^2_{y})\rho\,dxdyd\tau \\
+\int_0^t \gamma\iint(u^2_x+u^2_y)\rho'''\,dxdyd\tau 
+2\int_0^t \gamma\iint(f_{0x}u_x+f_{0y}u_y)\rho \,dxdyd\tau  \\+
\int_{B_t} \gamma f_0^2\big|_{x=0}\,dyd\tau
-2\int_0^t\gamma\iint f_1[(u_x\rho)_x+u_{yy}\rho]\,dxdyd\tau + \\
+b\int_0^t\gamma\iint (u_x^2+u_y^2)\rho'\,dxdyd\tau
+c(\rho,b)\iint_{B_t} \gamma u^2_x\big|_{x=0}\,dyd\tau.
\end{multline}
\end{lemma}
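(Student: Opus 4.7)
The essential content of the lemma is the weighted energy identity \eqref{2.52} together with its consequence \eqref{2.51}; once \eqref{2.51} is in hand, existence follows by approximating $u_0, f_0, f_1$ by smooth compatible data, building smooth solutions via Lemma~\ref{L2.11}, and passing to the weak limit, while uniqueness (for the linear problem) follows by applying \eqref{2.51} to the difference of two candidate solutions. By Remark~\ref{R2.3} it is enough to derive \eqref{2.52} at the level of the smooth, rapidly decaying solutions supplied by Lemma~\ref{L2.11}.

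The central device is to test equation \eqref{2.1} against the multiplier
\[
M := -2\gamma(\tau)\bigl[(u_x\rho(x))_x + u_{yy}\rho(x)\bigr]
\]
and integrate over $\Pi_t^+$. For the time-derivative contribution $u_\tau\cdot M$, integrating the spatial derivatives back onto $u_\tau$ produces $\gamma\,\partial_\tau\!\iint(u_x^2+u_y^2)\rho\,dxdy$; the $y=0,L$ boundary pieces vanish under each of the conditions a)--d) of \eqref{1.4}, and the $x=0$ boundary pieces vanish because $\mu\equiv 0$ and $u_0|_{x=0}\equiv 0$ force $u|_{x=0}\equiv 0$, hence $u_\tau|_{x=0}=u_y|_{x=0}=0$. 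Time-integration yields the first three terms on the two sides of \eqref{2.52}, including the $\gamma'$-correction.

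For the dispersive part $(bu_x+u_{xxx}+u_{xyy})M$, repeated integration by parts in $x$ and $y$ produces the positive interior smoothing $\int_0^t\gamma\iint(3u_{xx}^2+4u_{xy}^2+u_{yy}^2)\rho'\,dxdyd\tau$, the favorable boundary contribution $\gamma\rho(0)\int u_{xx}^2|_{x=0}$ (which is simply dropped from the LHS), the lower-order $\int\gamma\iint(u_x^2+u_y^2)\rho'''$ correction, the signed interior term $b\int\gamma\iint(u_x^2+u_y^2)\rho'$, and residual $x=0$ boundary terms of the form $c(\rho,b)\int\gamma u_x^2|_{x=0}$. The forcing is split according to the hypothesis $f=f_0+f_1$: on $-2\gamma f_0[(u_x\rho)_x+u_{yy}\rho]$ we integrate derivatives back onto $f_0$ to produce $2\int\gamma\iint(f_{0x}u_x+f_{0y}u_y)\rho$ together with the boundary piece $\int\gamma f_0^2|_{x=0}$ (estimated by the trace inequality \eqref{1.17}); the term $-2\gamma f_1[(u_x\rho)_x+u_{yy}\rho]$ is kept intact and, when combined with the LHS smoothing, is controlled by Cauchy--Schwarz in the weight $\rho^2/\rho'$, which matches $f_1\in L_{2,+}^{\rho^2(x)/\rho'(x)}$. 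Collecting all terms gives \eqref{2.52}.

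The bound \eqref{2.51} then follows from \eqref{2.52} with $\gamma\equiv 1$: the $\rho'''$-correction is absorbed by Gronwall (admissibility of $\rho$ yields $|\rho'''|\leq c\rho$), the $b\iint(u_x^2+u_y^2)\rho'$ term is absorbed into the smoothing by taking the constant in $\rho$ large, and the boundary trace $\int_0^t u_x^2|_{x=0}$ on the right-hand side is supplied by the $L_2$-level estimate of Lemma~\ref{L2.12} applied to the same data. The main technical obstacle is the accurate bookkeeping of the $x=0$ boundary contributions generated by $(u_{xxx}+u_{xyy})M$: the $H^1$-multiplier delivers the strongly positive $u_{xx}^2|_{x=0}$ trace, but the accompanying lower-order $u_x^2|_{x=0}$ terms, with signs depending on $b$, $\rho'(0)$ and $\rho''(0)$, cannot be absorbed by the $H^1$-energy itself; it is precisely the $L_2$-trace supplied by Lemma~\ref{L2.12} that allows \eqref{2.51} to close.
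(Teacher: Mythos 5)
Your proposal follows the paper's proof essentially verbatim: the same multiplier $-2\bigl[(u_x\rho)_x+u_{yy}\rho\bigr]$ (the paper first derives the differential identity \eqref{2.53} and only afterwards multiplies by $\gamma$ and integrates in time), the same cancellation of the $x=0$ and $y=0,L$ boundary contributions using $u\big|_{x=0}=0$ and conditions \eqref{1.4}, and the same reliance on the $L_2$-level trace estimate of Lemma~\ref{L2.12} (with $\nu=u_x\big|_{x=0}$, Remark~\ref{R2.4}) to control the residual $u_x^2\big|_{x=0}$ terms and close \eqref{2.51}, with existence by closure from the smooth solutions of Lemma~\ref{L2.11} as in Remark~\ref{R2.3}. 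One small correction: the term $b\int_0^t\gamma\iint(u_x^2+u_y^2)\rho'$ cannot be absorbed ``into the smoothing'' (which controls only second derivatives); like the $\rho'''$-term it is handled by Gronwall via $|\rho'|\leq c\rho$, and the $u_{xx}^2\rho\big|_{x=0}$ trace should first be used to absorb the cross term $2u_{xx}u_x\rho'\big|_{x=0}$ before being discarded.
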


\begin{proof}
In the smooth case multiplying \eqref{2.1} by $-2\bigl((u_x(t,x,y)\rho(x)\bigr)_x
+$\newline $u_{yy}(t,x,y)\rho(x)\bigr)$ and integrating over $\Sigma_+$, one obtains an equality:
\begin{multline}\label{2.53}
\frac{d}{dt}\iint(u_x^2+u_y^2)\rho \,dxdy
+\iint(3u_{xx}^2+4u^2_{xy}+u^2_{yy})\rho' \,dxdy - b\iint(u_x^2+u_y^2)\rho'\,dxdy\\ =
 \int_0^L (u_{xx}^2\rho + u_{xy}^2\rho+2u_{xx}u_x\rho'-u_x^2\rho''-bu_x^2\rho)\big|_{x=0}\,dy\\
+\iint(u^2_x+u^2_y)\rho'''\,dxdy 
+2\iint(f_{0x}u_x+f_{0y}u_y)\rho \,dxdy \\+
\int_0^L (f_0u_x\rho)\big|_{x=0}\,dy
-2\iint f_1[(u_x\rho)_x+u_{yy}\rho]\,dxdy.
\end{multline}
Since the trace of $u_x$ on the plane $x=0$ is already estimated in \eqref{2.46} (here $\nu=u_x|_{x=0}$, see Remark~\ref{R2.4}) equality \eqref{2.53} provides that
$$
\|u\|_{X^{1,\rho(x)}(\Pi_T^+)} \leq c.
$$
\end{proof}

\begin{lemma}\label{L2.14}
Let the hypothesis of Lemma~\ref{L2.13} be satisfied in the case $\rho(x)\equiv e^{2\alpha x}$ for certain $\alpha>0$.  Consider the weak solution $u\in X^{1,\rho(x)}(\Pi_T^+)$ to problem \eqref{2.1}, \eqref{1.2}--\eqref{1.4}. Then for any $t\in (0,T]$ the following equality holds: 
\begin{multline}\label{2.54}
-\frac13\iint u^3(t,x,y)\rho(x)\,dxdy 
+\frac b3\int_0^t \!\!\iint u^3\rho'\,dxdyd\tau \\
+2\int_0^t\!\iint(u_{xx}+u_{yy}) uu_x\rho \,dxdyd\tau 
+\int_0^t\!\iint (u_{xx}+u_{yy}) u^2\rho' \,dxdyd\tau \\
=-\frac13\iint u_0^3\rho \,dxdy-\int_0^t\!\iint fu^2\rho \,dxdyd\tau.
\end{multline}
\end{lemma}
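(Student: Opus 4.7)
The identity \eqref{2.54} is precisely what arises from multiplying equation \eqref{2.1} by $-u^2\rho(x)$ and integrating over $(0,t)\times\Sigma_+$. Following Remark~\ref{R2.3}, the plan is first to carry out the computation for the smooth solutions supplied by Lemma~\ref{L2.11}, and then to pass to the limit in $X^{1,\rho(x)}(\Pi_T^+)$.

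\textbf{Formal computation for smooth solutions.} Multiply \eqref{2.1} by $-u^2(t,x,y)\rho(x)$ and integrate over $\Sigma_+$. The time term yields $-\frac{d}{dt}\iint\frac{u^3}{3}\rho\,dxdy$. For the transport term, integration by parts in $x$ gives $-b\iint u_xu^2\rho\,dxdy=\frac{b}{3}\iint u^3\rho'\,dxdy$, where the boundary contribution at $x=0$ vanishes because $\mu\equiv 0$ forces $u|_{x=0}\equiv 0$, while the contribution at $x=+\infty$ vanishes by the exponential decay built into $\widetilde{\EuScript S}_{exp}(\overline{\Sigma}_+)$. For the dispersive terms I integrate by parts once in $x$ (not in $y$):
\begin{equation*}
-\iint u_{xxx}u^2\rho\,dxdy = \iint u_{xx}(2uu_x\rho+u^2\rho')\,dxdy,
\end{equation*}
\begin{equation*}
-\iint u_{xyy}u^2\rho\,dxdy = \iint u_{yy}(2uu_x\rho+u^2\rho')\,dxdy,
\end{equation*}
again with the $x=0$ boundary contribution killed by $u|_{x=0}=0$ and the $x=+\infty$ contribution by exponential decay. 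Crucially, since the IBP is only in the $x$ variable, no traces at $y=0,L$ are produced, and none of the four boundary conditions \eqref{1.4} is invoked at this stage. Adding the four contributions and setting them equal to $-\iint fu^2\rho\,dxdy$ gives the differential identity
\begin{equation*}
-\frac{d}{dt}\iint \tfrac{u^3}{3}\rho\,dxdy + \tfrac b3\iint u^3\rho'\,dxdy
+2\iint(u_{xx}+u_{yy})uu_x\rho\,dxdy + \iint(u_{xx}+u_{yy})u^2\rho'\,dxdy = -\iint fu^2\rho\,dxdy,
\end{equation*}
and integration from $0$ to $t$ gives \eqref{2.54}.

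\textbf{Passage to the limit.} For a general solution $u\in X^{1,\rho(x)}(\Pi_T^+)$ with $\rho=e^{2\alpha x}$, I approximate the data $(u_0,f)$ by sequences in the classes of Lemma~\ref{L2.11} (respecting the compatibility $\mu\equiv 0$, which is preserved by truncation and mollification), obtain smooth solutions $u_n$ satisfying \eqref{2.54}, and use the linear estimate \eqref{2.51} to get $u_n\to u$ in $X^{1,\rho(x)}(\Pi_T^+)$. The weight $\rho=e^{2\alpha x}$ is convenient because $u\rho^{1/2}=e^{\alpha x}u$ inherits all the unweighted space structure and the factor $\rho'/\rho=2\alpha$ is constant. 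Pointwise in $t$, $u_n(t,\cdot,\cdot)\to u(t,\cdot,\cdot)$ in $\widetilde H^{1,\rho}_+$; together with the two-dimensional embedding $H^1\hookrightarrow L^p$ for every finite $p$ (applied to $u_n\rho^{1/2}$), this gives convergence of the cubic terms $\iint u_n^3\rho\,dxdy$ and $\iint u_0^3\rho\,dxdy$. The two space-time integrals containing $u_{xx}+u_{yy}$ are handled by splitting as products of a term in $L_2(0,T;L_{2,+}^{\rho'(x)})$ (the second derivatives) against a term bounded in $L_2(0,T;L_{\infty,+})$ (the factor $uu_x$ or $u^2\rho'/\rho$, controlled via Lemma~\ref{L1.1} with $\rho_1=\rho_2=e^{2\alpha x}$); strong convergence of $u_n$ in $X^{1,\rho(x)}$ yields the required convergence. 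The term $\int_0^t\iint fu_n^2\rho\,dxdyd\tau$ is treated similarly using $f=f_0+f_1$.

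\textbf{Main obstacle.} The computation itself is straightforward; the only delicate point is the passage to the limit in the integrand $(u_{xx}+u_{yy})uu_x\rho$. The second-order derivatives only converge in $L_2$ in space-time with weight $\rho'$, so the other factor must be controlled in $L_2(0,T;L_{\infty,+})$ uniformly. The exponential weight, for which $\rho^{1/2}\leq c\rho'$, is exactly what makes the interpolation inequality \eqref{1.15} give such $L_{\infty,+}$ control of $uu_x$ from the $X^{1,\rho(x)}$ norm, so that the limit identity is valid. This is also the reason the lemma is stated only for $\rho=e^{2\alpha x}$.
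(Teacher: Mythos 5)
Your proposal follows exactly the paper's route: multiply \eqref{2.1} by $-u^2\rho(x)$, integrate (by parts only in $x$, so that no traces at $y=0,L$ appear and the $x=0$ terms die because $u|_{x=0}=0$), and then establish \eqref{2.54} for a general $u\in X^{1,\rho(x)}(\Pi_T^+)$ by closure from the smooth solutions of Lemma~\ref{L2.11}. The formal identity is computed correctly.

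One point in your limit passage is stated incorrectly, though the argument survives: you claim that $uu_x$ (and $u^2\rho'/\rho$) is ``bounded in $L_2(0,T;L_{\infty,+})$, controlled via Lemma~\ref{L1.1}''. Lemma~\ref{L1.1} is only valid for $q\in[2,+\infty)$, and in two dimensions $u(t,\cdot,\cdot)\in H^1_+$ does not give $u_x\in L_{\infty,+}$, so this uniform $L_2(0,T;L_{\infty,+})$ bound is not available from the $X^{1,\rho(x)}$ norm. Fortunately it is also not needed: to pair against $u_{xx}+u_{yy}\in L_2(0,T;L_{2,+}^{\rho'(x)})$ it suffices to have $uu_x\rho/(\rho')^{1/2}\in L_2(0,T;L_{2,+})$, and for $\rho=e^{2\alpha x}$ this follows from H\"older with exponents $(4,4)$ once one knows $u\in C([0,T];L_{4,+}^{\rho(x)})$ and $|Du|\in L_2(0,T;L_{4,+}^{\rho(x)})$ --- which is exactly what \eqref{1.15} with $q=4$, $\rho_1=\rho_2=\rho$ gives, and is precisely the observation the paper makes to justify the closure. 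So replace the $L_\infty$ claim by this $(2,4,4)$ H\"older splitting and the proof is complete.
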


\begin{proof}. In the smooth case multiplying \eqref{2.1} by $-u^2(t,x,y)\rho(x)$ and integrating one instantly obtains equality \eqref{2.54}.

In the general case this equality is established via closure. Note that by virtue of \eqref{1.15} (for $q=4$,
$\rho_1=\rho_2\equiv\rho$) if $u\in X^{1,\rho(x)}(\Pi_T^+)$ then 
$$
u\in C([0,T];L_{4,+}^{\rho(x)}),\quad |Du|\in L_2(0,T; L_{4,+}^{\rho(x)})
$$
and this passage to the limit is easily justified.
\end{proof}

\begin{lemma}\label{L2.15}
Let $\mu\equiv 0$, $\rho(x)$ be an admissible weight function, such that $\rho'(x)$ is also an admissible weight function, $u_0\in \widetilde H^{2,{\rho(x)}}_+$, $u_0\big|_{x=0}\equiv 0$ and $u_{0xxx},u_{0xyy} \in L^{\rho(x)}_{2,+}$, $f\in C([0,T];L_{2,+}^{\rho(x)})$, moreover, $f\equiv f_0+f_{1x}$, where $f_0,f_{0t}\in L_1(0,T;L_{2,+}^{\rho(x)})$, $f_1,f_{1t}\in L_2(0,T;L_{2,+}^{\rho^2(x)/\rho'(x)})$. Then for the (unique) weak solution to problem \eqref{2.1}, \eqref{1.2}--\eqref{1.4} from the space $X^{\rho(x)}(\Pi_T^+)$  there exists $u_t\in X^{\rho(x)}(\Pi_T^+)$, which is the weak solution to problem of \eqref{2.1}, \eqref{1.2}--\eqref{1.4} type, where $f$ is substituted by $f_t$, $u_0$ -- by $\bigl(f\big|_{t=0}-bu_{0x} -u_{0xxx}- u_{0xyy}\bigr)$, $\mu\equiv 0$.
\end{lemma}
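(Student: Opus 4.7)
The plan is to exploit the linearity of (2.1) as follows: the function $v:=u_t$ should itself be a weak solution of the same linear problem with right-hand side $f_t$, homogeneous boundary data, and initial datum read off from (2.1) at $t=0$. I would realize this through the smooth-approximation scheme of Remark~\ref{R2.3}, controlling $u_t$ by applying Lemma~\ref{L2.12} to the time-differentiated equation.

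In the smooth setting of Lemma~\ref{L2.11} (with $\mu\equiv 0$ and full compatibility at $t=0$), the solution $u\in C^\infty([0,T];\widetilde{\EuScript S}_{exp}(\overline{\Sigma}_+))$ makes $v=u_t$ smooth as well. Differentiating (2.1) in $t$, using that the boundary conditions (1.4) commute with $\partial_t$ and that $v|_{x=0}=\mu_t\equiv 0$, shows $v$ solves the linear problem with source $f_t=f_{0t}+(f_{1t})_x$ and initial value
\[
v|_{t=0}=f|_{t=0}-bu_{0x}-u_{0xxx}-u_{0xyy}\in L^{\rho(x)}_{2,+},
\]
the membership following from $f\in C([0,T];L^{\rho(x)}_{2,+})$ and the hypothesis $u_{0xxx},u_{0xyy}\in L^{\rho(x)}_{2,+}$. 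Lemma~\ref{L2.12} applied to $v$ then gives the a priori bound
\[
\|v\|_{X^{\rho(x)}(\Pi_T^+)}\le c\bigl(\|v|_{t=0}\|_{L^{\rho(x)}_{2,+}}+\|f_{0t}\|_{L_1(0,T;L^{\rho(x)}_{2,+})}+\|f_{1t}\|_{L_2(0,T;L^{\rho^2(x)/\rho'(x)}_{2,+})}\bigr),
\]
whose right-hand side is finite by the hypotheses of the lemma.

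For general data I would approximate $u_0,f_0,f_1$ by smooth sequences $u_0^n,f_0^n,f_1^n$ in $\widetilde{\EuScript S}(\overline{\Sigma})\cap\widetilde{\EuScript S}_{exp}(\overline{\Sigma}_+)$ converging in the hypothesis norms and supported in $\{x\geq \varepsilon_n\}$ for some $\varepsilon_n\downarrow 0$. This last property kills every $\widetilde\Phi_j^n(0,y)$ identically, so the compatibility conditions of Lemma~\ref{L2.11} with $\mu^n\equiv 0$ are automatic and yield smooth solutions $u^n$; the previous step then gives uniform control of $v^n:=u^n_t$ in $X^{\rho(x)}(\Pi_T^+)$. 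Applying the same bound to the difference $v^n-v^m$ (which solves a linear problem built from differences of $v_0^n,f_{0t}^n,f_{1t}^n$) shows $(v^n)$ is Cauchy; denote its limit by $v$. Lemma~\ref{L2.12} applied to the original problem also gives $u^n\to u$ in $X^{\rho(x)}(\Pi_T^+)$, and passing to the limit in the primitive identity $u^n(t)=u_0^n+\int_0^t v^n(\tau)\,d\tau$ identifies $u_t=v$; the weak formulation (2.45) for $v$ is inherited by continuity of the test-function pairing.

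The main technical obstacle is the construction of the approximations: one must simultaneously respect the weighted $L_2$-convergence, the boundary conditions (1.4) in $y$, and the compatibility conditions of Lemma~\ref{L2.11}. The weight and $y$-boundary structure are handled by the standard combination of $x$-mollification with truncation of the expansion in the eigenbasis $\{\psi_l(y)\}$; convergence in the weighted norms follows from dominated convergence since $\rho$ is admissible. The multiplication by $\eta(x-\varepsilon_n)$ then enforces all higher-order compatibility at $(t=0,x=0)$ for free, because by induction on $j$ the functions $\widetilde\Phi_j^n$ are supported in $\{x\geq\varepsilon_n\}$ whenever $u_0^n$ and each $\partial_t^k f^n$ are.
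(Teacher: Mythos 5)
Your proposal matches the paper's (one-line) proof exactly: the paper simply states that the result for $v\equiv u_t$ follows as in Lemma~\ref{L2.12}, with the smooth-solution-plus-closure scheme of Remark~\ref{R2.3} understood, and that is precisely the argument you carry out — differentiate the equation in $t$, read off $v|_{t=0}$ from \eqref{2.1}, apply the energy estimate of Lemma~\ref{L2.12} with $f_{0t}$, $f_{1t}$ in place of $f_0$, $f_1$, and pass to the limit by linearity. Your extra care with the compatible smooth approximations only fills in details the paper leaves implicit.
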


\begin{proof}
The proof for the function $v\equiv u_t$ is similar to Lemma~\ref{L2.12}.
\end{proof}

\begin{lemma}\label{L2.16}
Let the hypotheses of Lemma~\ref{L2.13} and Lemma~\ref{L2.15} be satisfied and, in addition, $f\in L_1(0,T;\widetilde H_+^{(0,2),\rho(x)})$. Then there exists a (unique) solution to problem \eqref{2.1}, \eqref{1.2}--\eqref{1.4} from the space $X^{2,\rho(x)}(\Pi_T^+)$ and for any $t\in [0,T]$
\begin{multline}\label{2.55}
\|u\|^2_{X^{2,\rho(x)}(\Pi_t^+)} \leq c(T)\Bigl(\|u_{0yy}\|^2_{L_{2,+}^{\rho(x)}} +
\|f\|^2_{C([0,t];L_{2,+}^{\rho(x)})} + \|u_t\|^2_{C([0,t];L_{2,+}^{\rho(x)})} \\+
\|u\|^2_{C([0,t];\widetilde H_+^{1,\rho(x)})} + 
\sup\limits_{\tau\in (0,t]}\Bigl|\int_0^{\tau} \!\!\iint f_{yy}u_{yy}\rho\,dxdyds\Bigr| +
\int_0^t \!\!\iint (u_{xx}^2+u_{yy}^2)\rho\,dxdyd\tau \\+
\int_0^t \!\!\iint (f^2+u_t^2+b^2u_x^2+bu_{yy}^2)\rho'\,dxdyd\tau\Bigr).
\end{multline}
\end{lemma}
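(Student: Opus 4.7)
The plan is to follow Remark~\ref{R2.3}: work first with a smooth solution provided by Lemma~\ref{L2.11}, derive the new second-$y$-derivative estimate there, then pass to the limit for the hypotheses stated, using the a priori bounds of Lemmas~\ref{L2.13} and~\ref{L2.15}. The only genuinely new ingredient beyond those two lemmas is an $L_2^{\rho(x)}$-in-space bound on $u_{yy}$ (together with companion $L_2(0,T;L_{2,+}^{\rho'(x)})$ bounds on $u_{xyy}$ and $u_{yyy}$); once that is in hand, the equation itself should recover the remaining third-order $x$-derivative.

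To produce that bound, I would set $v:=u_{yy}$. Then $v$ solves the same linear equation with forcing $f_{yy}$, still homogeneous at $x=0$ and, one checks case by case in~\eqref{1.4}, still satisfying the $y$-boundary conditions of the spaces $\widetilde H_+^k$: in case a) via $\partial_y^{2m}u|_{y=0,L}=0$ for $m=0,1$; in case b) via $\partial_y^{2m+1}u|_{y=0,L}=0$ for $m=0,1$; in case c) via the mixed variant; in case d) via periodicity. Multiplying the equation for $v$ by $2v\rho(x)$ and integrating over $\Sigma_+$ should reproduce the template of~\eqref{2.48} with $u$ replaced by $v$ and $f_0$ by $f_{yy}$ (the $f_1,f_2$ contributions being absent), yielding
\begin{equation*}
\frac{d}{dt}\iint u_{yy}^2\rho\,dxdy+\rho(0)\int_0^L u_{xyy}^2\big|_{x=0}\,dy+\iint(3u_{xyy}^2+u_{yyy}^2-bu_{yy}^2)\rho'\,dxdy=\iint u_{yy}^2\rho'''\,dxdy+2\iint f_{yy}u_{yy}\rho\,dxdy.
\end{equation*}
Time-integration, a Gronwall step on the $bu_{yy}^2\rho'$ term, and absorption of the $\rho'''$ term through the admissibility of $\rho$ should then deliver $u_{yy}\in C([0,T];L_{2,+}^{\rho(x)})$ and $u_{xyy},u_{yyy}\in L_2(0,T;L_{2,+}^{\rho'(x)})$, together with $u_{xyy}|_{x=0}\in L_2(B_T)$, the right-hand side of the bound consisting of precisely the summands already visible on the right of~\eqref{2.55}.

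The remaining $x$-regularity required for $X^{2,\rho(x)}$ will come from the equation itself: $u_{xxx}=f-u_t-bu_x-u_{xyy}$. Each summand on the right is controlled in $L_2(0,T;L_{2,+}^{\rho'(x)})$ (by Lemma~\ref{L2.15} for $u_t$, Lemma~\ref{L2.13} for $u_x$, the previous step for $u_{xyy}$, and the hypothesis of Lemma~\ref{L2.13} for $f$), which gives $u_{xxx}$ in the same space. For the time-continuous part, rewriting $u_{xxx}=f-u_t-bu_x-(u_{yy})_x$ places $(u_{yy})_x$ in $C([0,T];H_+^{(-1,0),\rho(x)})$ by the previous paragraph and the other three summands in $C([0,T];L_{2,+}^{\rho(x)})$, so Lemma~\ref{L1.2} with $j=2$ should yield $u_{xx}\in C([0,T];L_{2,+}^{\rho(x)})$. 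The mixed derivatives $u_{xy}$ and $u_{xxy}$ can be obtained analogously by once-$y$-differentiating the equation (legitimised by $f_y\in L_1(0,T;L_{2,+}^{\rho(x)})$ supplied by $f\in L_1(0,T;\widetilde H_+^{(0,3),\rho(x)})$) and applying Lemma~\ref{L1.2} with $j=1$ to $u_y$.

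Uniqueness should follow from Lemma~\ref{L2.1} since $X^{2,\rho(x)}(\Pi_T^+)\subset L_2(\Pi_T^+)$. I expect the main delicate step to be the case-by-case verification that $\partial_y^2$ preserves all four $y$-boundary conditions of the spaces $\widetilde H_+^k$, combined with the tracking of the trace $u_{xyy}|_{x=0}$ through the limiting argument: this trace carries the beneficial sign $+\rho(0)\int u_{xyy}^2|_{x=0}$ in the identity above, so it aids rather than obstructs the estimate, but it must still be handled carefully when passing from smooth approximants to the general hypothesis.
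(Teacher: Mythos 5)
Your overall architecture coincides with the paper's: the new ingredient is exactly the energy identity \eqref{2.56} for $v=u_{yy}$ (obtained for the smooth approximants of Lemma~\ref{L2.11} and passed to the limit), which gives $u_{yy}\in C([0,T];L_{2,+}^{\rho(x)})$, $u_{xyy},u_{yyy}\in L_2(0,T;L_{2,+}^{\rho'(x)})$ and the trace $u_{xyy}|_{x=0}\in L_2(B_T)$; and the recovery of $u_{xx}$ in $C([0,T];L_{2,+}^{\rho(x)})$ via $u_{xxx}=f-u_t-bu_x-(u_{yy})_x$ and Lemma~\ref{L1.2} with $j=2$, together with $u_{xxx}\in L_2(0,T;L_{2,+}^{\rho'(x)})$ read off from the equation, is also exactly \eqref{2.58}--\eqref{2.59}, \eqref{2.62}.

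The gap is in your treatment of the mixed derivatives. You propose to get $u_{xy}$ and $u_{xxy}$ by $y$-differentiating the equation and applying Lemma~\ref{L1.2} to $u_y$. But then $\|u_{xy}(t)\|_{L_{2,+}^{\rho(x)}}$ is controlled by $\|u_{xxxy}(t)\|_{H_+^{(-1,0),\rho(x)}}$, and $u_{xxxy}=f_y-u_{ty}-bu_{xy}-(u_{yyy})_x$: under the stated hypotheses $u_{ty}$ and $u_{yyy}$ are controlled only in $L_2(0,T;L_{2,+}^{\rho'(x)})$ (not uniformly in $t$ with weight $\rho$), and $f_y$ only in $L_1$ or $L_2$ in time, so no $C([0,T];L_{2,+}^{\rho(x)})$ bound on $u_{xy}$ --- which is needed for $u\in C([0,T];\widetilde H_+^{2,\rho(x)})$ --- can be extracted this way; the same obstruction blocks $u_{xxy}$. (Note also that you invoke $f\in L_1(0,T;\widetilde H_+^{(0,3),\rho(x)})$, whereas the present lemma only assumes $(0,2)$.) The paper avoids this entirely with elementary integration-by-parts identities that use only quantities already estimated: $\iint u_{xy}^2\rho\,dxdy=\iint u_{xx}u_{yy}\rho\,dxdy+\iint u_{yy}u_x\rho'\,dxdy$ gives $u_{xy}\in C([0,T];L_{2,+}^{\rho(x)})$ from $u_{xx},u_{yy},u_x$, and $\iint u_{xxy}^2\rho'\,dxdy=\iint u_{xxx}u_{xyy}\rho'\,dxdy+\iint u_{xyy}u_{xx}\rho''\,dxdy+\int_0^L(u_{xyy}u_{xx}\rho')\big|_{x=0}\,dy$, combined with the trace inequality \eqref{1.17} and the already controlled trace $u_{xyy}|_{x=0}$, gives $u_{xxy}\in L_2(0,T;L_{2,+}^{\rho'(x)})$. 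You also slightly misplace the delicate point: the trace $u_{xyy}|_{x=0}$ is indeed benign in \eqref{2.56}, but its real role is in closing the $u_{xxy}$ estimate above, not in the limiting argument.
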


\begin{proof}
For smooth solutions differentiating equality \eqref{2.1} twice with respect to $y$, multiplying the obtained equality by $2u_{yy}(t,x,y)\rho(x)$ and integrating over $\Sigma_+$ we derive similarly to  \eqref{2.48} that
\begin{multline}\label{2.56}
\frac d{dt} \iint u_{yy}^2\rho\,dxdy +\rho(0)\int_0^L u_{xyy}^2\big|_{x=0}\,dy +
\iint (3u_{xyy}^2+u_{yyy}^2-bu^2_{yy})\rho' \,dxdy \\ = \iint u_{yy}^2 \rho'''\,dxdy + 
2\iint f_{yy}u_{yy}\rho\,dxdy,
\end{multline}
whence obviously follows that
\begin{equation}\label{2.57}
\|u_{yy}\|_{X^{\rho(x)}(\Pi_T^+)} \leq c.
\end{equation}
Hence, for the weak solution also $u_{yy}\in X^{\rho(x)}(\Pi_T^+)$. Lemmas~\ref{L2.13} and~\ref{L2.15} provide that $u\in X^{1,\rho(x)}(\Pi_T^+)$, $u_{t}\in X^{\rho(x)}(\Pi_T^+)$. Write equality \eqref{2.1} in the form
\begin{equation}\label{2.58}
u_{xxx} = f-u_t -bu_x -u_{xyy}.
\end{equation}
Then, inequality \eqref{1.16} for $j=2$ and \eqref{2.58} yield that 
\begin{multline}\label{2.59}
\|u_{xx}\|_{L_{2,+}^{\rho(x)}} \leq c(\rho)\bigl(\|u_{xxx}\|_{H^{(-1,0),\rho(x)}} + 
\|u\|_{\widetilde H^{1,\rho(x)}_+}\bigr) \\ \leq
c(\rho,b)\bigl(\|f\|_{L^{\rho(x)}_{2,+}} + 
\|u_t\|_{L^{\rho(x)}_{2,+}} + \|u_{yy}\|_{L^{\rho(x)}_{2,+}} + \|u\|_{\widetilde H^{1,\rho(x)}_+}\bigr).
\end{multline}
Since
$$
\iint u_{xy}^2\rho\,dxdy = \iint u_{xx}u_{yy}\rho\,dxdy +\iint u_{yy}u_x\rho'\,dxdy,
$$
estimates \eqref{2.57} and \eqref{2.59} yield that $u\in C([0,T];\widetilde H_+^{2,\rho(x)})$ and
\begin{equation}\label{2.60}
\|u(t,\cdot,\cdot)\|_{\widetilde H^{2,\rho(x)}_+} \leq c\bigl( \|f\|_{L^{\rho(x)}_{2,+}} + 
\|u_t\|_{L^{\rho(x)}_{2,+}} + \|u_{yy}\|_{L^{\rho(x)}_{2,+}} + \|u\|_{\widetilde H^{1,\rho(x)}_+}\bigr).
\end{equation}
Next, 
\begin{multline*}
\iint u_{xxy}^2\rho'\,dxdy = \iint u_{xxx}u_{xyy}\rho'\,dxdy +\iint u_{xyy}u_{xx}\rho''\,dxdy \\
+\int_0^L (u_{xyy}u_{xx}\rho')\big|_{x=0}\,dy
\end{multline*}
and inequality \eqref{1.17} provides that
\begin{equation}\label{2.61}
\iint u_{xxy}^2\rho'\,dxdy \leq \iint (u_{xxx}^2+u_{xyy}^2)\rho'\,dxdy + \int_0^L u^2_{xyy}\big|_{x=0}\,dy +
c\iint u_{xx}^2\rho\,dxdy.
\end{equation}
From equality \eqref{2.58} we derive that
\begin{equation}\label{2.62}
\iint u_{xxx}^2\rho'\,dxdy \leq c\iint (f^2+u_t^2+b^2u_x^2+u_{xyy}^2)\rho'\,dxdy
\end{equation}
and combining \eqref{2.56}, \eqref{2.60}--\eqref{2.62} finish the proof.
\end{proof}

\begin{lemma}\label{L2.17}
Let $\mu\equiv 0$, $\rho(x)$ be an admissible weight function, such that $\rho'(x)$ is also an admissible weight function, $u_0\in \widetilde H^{3,\rho(x)}_+$, $u_0(0,y)\equiv 0$, $f\in C([0,T];L_{2,+}^{\rho(x)})$ and $f\equiv f_0+f_{1x}$, where $f_0\in L_2(0,T;\widetilde H_+^{1,\rho(x)})\cap L_1(0,T;\widetilde H_+^{(0,3),\rho(x)})$, $f_{0t}\in L_1(0,T;L_{2,+}^{\rho(x)})$, $f_1\in L_2(0,T;L_{2,+}^{\rho^2(x)/\rho'(x)})$, $f_{1x}\in L_2(0,T;\widetilde H_+^{(0,2),\rho^2(x)/\rho'(x)})$, $f_{1t}\in L_2(0,T;L_{2,+}^{\rho^2(x)/\rho'(x)})$. Then there exists a (unique) solution to problem \eqref{2.1}, \eqref{1.2}--\eqref{1.4} from the space $X^{3,\rho(x)}(\Pi_T^+)$ and for any $t\in (0,T]$
\begin{multline}\label{2.63}
\|u\|_{X^{3,\rho(x)}(\Pi_t^+)} \leq c(T)\bigl(\|u_0\|_{\widetilde H_+^{3,\rho(x)}} +
\|f\|_{C([0,t];L_{2,+}^{\rho(x)})} + \|f_0\|_{L_2(0,t;\widetilde H_+^{1,\rho(x)})} \\+
\|f_{0yyy}\|_{L_1(0,t;L_{2,+}^{\rho(x)})} + \|f_{0t}\|_{L_1(0,t;L_{2,+}^{\rho(x)})} +
\|f_1\|_{L_2(0,t;L_{2,+}^{\rho^2(x)/\rho'(x)})} \\ 
+ \|f_{1x}\|_{L_2(0,t;L_{2,+}^{\rho^2(x)/\rho'(x)})} +
\|f_{1xyy}\|_{L_2(0,t;L_{2,+}^{\rho^2(x)/\rho'(x)})} +\|f_{1t}\|_{L_2(0,t;L_{2,+}^{\rho^2(x)/\rho'(x)})}\bigr).
\end{multline} 
\end{lemma}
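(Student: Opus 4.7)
The plan is to extend the chain of Lemmas~\ref{L2.12}--\ref{L2.16} by one more order, combining them with an additional $H^1$-level energy estimate for $w\equiv u_{yy}$ and then recovering every remaining $x$-derivative algebraically from the equation via Lemma~\ref{L1.2}. Following Remark~\ref{R2.3}, all a priori estimates are derived on the smooth approximations produced by Lemma~\ref{L2.11} and transferred to the general case by passing to the limit. First, I would verify that the hypotheses of Lemmas~\ref{L2.15} and~\ref{L2.16} are met: the compatibility value $\widetilde\Phi_1=f|_{t=0}-bu_{0x}-u_{0xxx}-u_{0xyy}$ lies in $L_{2,+}^{\rho(x)}$ because $u_0\in\widetilde H_+^{3,\rho(x)}$ and $f\in C([0,T];L_{2,+}^{\rho(x)})$, and the inclusion $f\in L_1(0,T;\widetilde H_+^{(0,2),\rho(x)})$ follows from $f_0\in L_1(0,T;\widetilde H_+^{(0,3),\rho(x)})$ together with $f_{1x}\in L_2(0,T;\widetilde H_+^{(0,2),\rho^2(x)/\rho'(x)})$. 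Thus Lemma~\ref{L2.15} provides $u_t\in X^{\rho(x)}(\Pi_T^+)$, which already fills the $j=1$ slot of $X^{3,\rho(x)}$, and Lemma~\ref{L2.16} yields $u\in X^{2,\rho(x)}(\Pi_T^+)$.

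The core step is the $H^1$-level energy estimate for $w\equiv u_{yy}$. In each case a)--d) the operator $\partial_y^2$ preserves the corresponding boundary conditions on the smooth approximants, so $w$ classically solves \eqref{2.1}, \eqref{1.2}--\eqref{1.4} with initial datum $u_{0yy}\in\widetilde H_+^{1,\rho(x)}$ (which vanishes at $x=0$ because $u_0|_{x=0}\equiv 0$), zero boundary datum at $x=0$, and right-hand side $f_{yy}=f_{0yy}+f_{1xyy}$. Following the structure of Lemma~\ref{L2.13}, I multiply the $w$-equation by $-2\bigl((w_x\rho)_x+w_{yy}\rho\bigr)$ and integrate. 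The hypothesis does not give $f_{0yy}\in L_2(0,T;\widetilde H_+^{1,\rho(x)})$, so Lemma~\ref{L2.13} cannot be quoted as a black box; instead, I move $y$-derivatives off $f$ via successive integrations by parts in $y$ (the $y=0,L$ boundary terms vanish thanks to \eqref{1.4}), thereby limiting the appearance of $f_0$ to $y$-derivatives of order at most three -- controlled by the mixed regularity $f_0\in L_2(0,T;\widetilde H_+^{1,\rho(x)})\cap L_1(0,T;\widetilde H_+^{(0,3),\rho(x)})$ -- and that of $f_{1x}$ to at most two $y$-derivatives, controlled by $f_{1x}\in L_2(0,T;\widetilde H_+^{(0,2),\rho^2(x)/\rho'(x)})$. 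The $x=0$ boundary term contains $w_x|_{x=0}=u_{xyy}|_{x=0}$, which lies in $L_2(B_T)$ by the $L_2$-estimate of Lemma~\ref{L2.12} applied to the $u_{yy}$-equation, exactly as in the proof of Lemma~\ref{L2.16}; the trace inequality \eqref{1.17} permits its absorption. The resulting energy identity yields $u_{yy}\in X^{1,\rho(x)}(\Pi_T^+)$, so in particular $u_{xyy},u_{yyy}\in C([0,T];L_{2,+}^{\rho(x)})$ and $u_{xxyy},u_{xyyy},u_{yyyy}\in L_2(0,T;L_{2,+}^{\rho'(x)})$.

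To close, I recover the remaining $x$-derivatives algebraically. Rewriting the equation as $u_{xxx}=f-u_t-bu_x-u_{xyy}$ shows that the right-hand side lies in $C([0,T];L_{2,+}^{\rho(x)})$, hence so does $u_{xxx}$; differentiating this identity once in $y$ and invoking the $L_2(0,T;L_{2,+}^{\rho'(x)})$ bounds on $f_y$, $u_{ty}$, $u_{xy}$, and $u_{xyyy}$ puts $u_{xxxy}$ into $L_2(0,T;L_{2,+}^{\rho'(x)})$, while one further $x$-derivative of the equation combined with Lemma~\ref{L1.2} (used for $j=1$ and $j=2$ applied to $u_{xxx}$) controls $u_{xx}$, $u_{xxy}$, and $u_{xxxx}$ in the required weighted spaces. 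Putting these estimates together with those from the first step delivers $u\in X^{3,\rho(x)}(\Pi_T^+)$ and the quantitative bound \eqref{2.63}; uniqueness follows at once from Lemma~\ref{L2.1}. The main technical hurdle is the bookkeeping in the $w$-estimate: the various $f_0$ contributions must be absorbed without losing a factor of $\rho/\rho'$, which is precisely why the mixed $L_1$--$L_2$ regularity of $f_0$ and the weighted-$L_2$ control of two $y$-derivatives of $f_{1x}$ have been built into the hypotheses.
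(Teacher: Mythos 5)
Your overall skeleton (settle orders $\le 2$ via Lemmas~\ref{L2.12}--\ref{L2.16}, then a third-order energy estimate in $y$, then recover the remaining $x$-derivatives from the equation) is the right one, but the two technical choices you make at the third-order level do not close, and they are precisely the points where the paper does something different. First, your core step -- the $H^1$-level energy estimate for $w\equiv u_{yy}$ with multiplier $-2\bigl((w_x\rho)_x+w_{yy}\rho\bigr)$ -- forces you to pair $f_{0yy}$ against $(u_{xyy}\rho)_x=u_{xxyy}\rho+u_{xyy}\rho'$. The obstruction here is an $x$-derivative, so ``successive integrations by parts in $y$'' cannot remove it: integrating by parts in $x$ produces $f_{0xyy}$, which is not in the hypotheses; leaving it alone pairs $f_{0yy}\in L_1(0,T;L_{2,+}^{\rho(x)})$ against $u_{xxyy}$, which is only in $L_2(0,T;L_{2,+}^{\rho'(x)})$, and neither H\"older pairing closes; integrating by parts in $y$ yields $\iint f_{0yyy}u_{xxy}\rho$, and $\sup_t\|u_{xxy}\rho^{1/2}\|_{L_{2,+}}$ is \emph{not} controlled by the left-hand side of your $w$-estimate, so Gronwall does not apply. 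The paper avoids all of this by doing the $L_2$-level estimate for $u_{yyy}$ instead (multiply the twice-$y$-differentiated equation by $-2u_{yyyy}\rho$, see \eqref{2.65}): the multiplier contains no $x$-derivatives, so the forcing terms reduce exactly to $2\iint f_{0yyy}u_{yyy}\rho$ (an $L_1$--$L_\infty$ pairing closed by Gronwall) and a term in $f_{1xyy}$ absorbed by $\varepsilon\iint u_{yyyy}^2\rho'$ -- which is exactly why the hypotheses ask for $f_{0yyy}$ and $f_{1xyy}$ and nothing more.

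Second, your closing paragraph asserts that one more $x$-derivative of the equation plus Lemma~\ref{L1.2} controls $u_{xxy}$ ``in the required weighted spaces.'' Lemma~\ref{L1.2} bounds only pure $x$-derivatives $\partial_x^j\varphi$ from $\varphi_{xxx}$ and lower-order information; it gives nothing for the mixed derivative $u_{xxy}$, and obtaining $u_{xxy}\in C([0,T];L_{2,+}^{\rho(x)})$ (a required piece of $u\in C([0,T];\widetilde H_+^{3,\rho(x)})$) is in fact the hardest part of the lemma: every elementary integration by parts trades it for a fourth derivative that is only square-integrable in time. The paper resolves this with a genuinely different device: reflect $u$ across $y=0$ and $y=L$ according to the case a)--d), apply the elliptic estimate $\|g\|_{H^2(\mathbb R^2_+)}\leq c\bigl(\|\Delta g\|_{L_2}+\|g|_{x=0}\|_{H^{3/2}}+\|g\|_{H^1}\bigr)$ to $g=\widetilde u_x\rho^{1/2}$, compute $\Delta(\widetilde u_x\rho^{1/2})$ from the equation (bounded in $C([0,T];L_2)$ by \eqref{2.64}), and control the trace $u_x|_{x=0}$ in $C([0,T];H^{3/2}(\mathbb R))$ by interpolating between $u_{tx}|_{x=0}\in L_2(B_T)$ and $u_x|_{x=0}\in L_2(0,T;H^3)$. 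This yields $u_x\in C([0,T];H_+^{2,\rho(x)})$, hence $u_{xxx}$ and $u_{xxy}$ uniformly in time, and no substitute for it appears in your proposal.
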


\begin{proof}
First of all note that hypotheses of Lemmas~\ref{L2.12} (for $f_2\equiv 0$), \ref{L2.13}, \ref{L2.15} and~\ref{L2.16} are satisfied. Therefore, taking into account also Remark~\ref{R2.4} we derive for smooth solutions that
\begin{equation}\label{2.64}
\|u\|_{X^{2,\rho(x)}(\Pi_T^+)} + \|u_x\big|_{x=0}\|_{L_2(B_T)} + 
\|u_t\|_{X^{\rho(x)}(\Pi_T^+)} + \|u_{tx}\big|_{x=0}\|_{L_2(B_T)}  \leq c.
\end{equation}
Next, differentiating equality \eqref{2.1} twice with respect to $y$, multiplying the obtained equality by $-2u_{yyyy}(t,x,y)\rho(x)$ and integrating over $\Sigma_+$ we derive similarly to  \eqref{2.56} that
\begin{multline}\label{2.65}
\frac d{dt} \iint u_{yyy}^2\rho\,dxdy +\rho(0)\int_0^L u_{xyyy}^2\big|_{x=0}\,dy +
\iint (3u_{xyyy}+u_{yyyy}^2-bu^2_{yyy})\rho' \,dxdy \\ = \iint u_{yyy}^2 \rho'''\,dxdy + 
2\iint f_{0yyy}u_{yyy}\rho\,dxdy - 2\iint f_{1yy}u_{yyyy}\rho\,dxdy.
\end{multline}
Here
$$
\Bigl|2\iint f_{1yy}u_{yyyy}\rho\,dxdy \Bigr| \leq \varepsilon \iint u_{yyyy}^2\rho'\,dxdy +
\frac 1\varepsilon\iint f_{1yy}^2 \frac{\rho^2}{\rho'}\,dxdy,
$$
where $\varepsilon>0$ can be chosen arbitrarily small, and equality \eqref{2.65} yields that
\begin{equation}\label{2.66}
\|u_{yyy}\|_{X^{\rho(x)}(\Pi_T^+)} + \|u_{xyyy}\big|_{x=0}\|_{L_2(B_T)}  \leq c.
\end{equation}
Again apply equality \eqref{2.58}. Then it follows from \eqref{2.66} that we have the suitable estimate on $u_y$ in the space $L_2(0,T; \widetilde H_+^{3,\rho'(x)})$ and, in particular, on $u_{xxyy}$ in $L_2(0,T;L_{2,+}^{\rho'(x)})$ (for similar argument see \eqref{2.61}). One more application of \eqref{2.58} yields the estimate on $u_{xxxx}$ in $L_2(0,T;L_{2,+}^{\rho'(x)})$. As a result
\begin{equation}\label{2.67}
\|u\|_{L_2(0,T;\widetilde H_+^{4,\rho'(x)})} \leq c.
\end{equation}

Consider the extensions of the functions $u$ and $f$ for $y\in (L,2L]$ and $y\in [-L,0)$ in the case a) by the  even reflections through $y=L$ and $y=0$, in the case b) -- by the odd ones, in the case c) -- by the corresponding combination of these methods, in the case d) -- by the periodic extension. Then the functions $u$ and $f$ remain smooth in the more wide domain $[0,T]\times \overline{\mathbb R}_+ \times [-L,2L]$, and equality \eqref{2.1} also remains valid. Let $\eta_L(y) \equiv \eta(1+y/L)\eta(2-y/L)$, $\widetilde u(t,x,y) \equiv u(t,x,y)\eta_L(y)$, $\widetilde f(t,x,y) \equiv f(t,x,y)\eta_L(y)$. 
Now we apply the inequality (see, e.g. \cite{LM}) for the domain $\mathbb R^2_+ = \{(x,y); x>0\}$
$$
\|g\|_{H^2(\mathbb R^2_+)}\leq c\bigl(\|\Delta g\|_{L_2(\mathbb R^2_+)}+
\|g\big|_{x=0}\|_{H^{3/2}(\mathbb R)}+
\|g\|_{H^1(\mathbb R^2_+)}\bigr)
$$
for the function $g\equiv \widetilde u_x\rho^{1/2}(x)$. Note that 
$$
\Delta_{x,y} (\widetilde u_x\rho^{1/2}) = (\widetilde f -\widetilde u_t -b\widetilde u_x +2u_{xy}\eta_L'+u_x\eta_L'')\rho^{1/2} +
2\widetilde u_{xx}(\rho^{1/2})' +\widetilde u_x(\rho^{1/2})''.
$$
It follows from (\ref{2.64}) that
$$
\|\Delta_{x,y} (\widetilde u_x\rho^{1/2})\|_{C([0,T];L_2(\mathbb R^2_+)} \leq c.
$$
Moreover, by virtue of (\ref{2.64}), (\ref{2.66}) and embedding
$H^2(\mathbb R^2_+)\subset H^{3/2}(\{{x=0}\}\times \mathbb R)$ (see \cite{LM})
\begin{multline*}
\|u_x\big|_{x=0}\|_{C([0,T];H^{3/2}(\mathbb R))} \leq 
\|u_{0x}\big|_{x=0}\|_{H^{3/2}(\mathbb R)}\\
+2\|u_{tx}\big|_{x=0}\|_{L_2((0,T)\times \mathbb R)}^{1/2}
\|u_x\big|_{x=0}\|_{L_2(0,T;H^3(\mathbb R))}^{1/2} \leq c.
\end{multline*}
Therefore,
\begin{equation}\label{2.68}
\|u_x\|_{C([0,T];H_+^{2,\rho(x)})} \leq c.
\end{equation}
Estimates \eqref{2.64}, \eqref{2.66}--\eqref{2.68} provide the desired result.
\end{proof}

\section{Existence of solutions}\label{S3}

Consider an auxiliary equation
\begin{equation}\label{3.1}
u_t+bu_x+u_{xxx}+u_{xyy}+(g(u))_x+(\psi(t,x,y)u)_x=f(t,x,y).
\end{equation}
The notion of a weak solution to problem \eqref{3.1}, \eqref{1.2}--\eqref{1.4} is similar to Definition~\ref{D1.1}.

\begin{lemma}\label{L3.1}
Let $g\in C^1(\mathbb R)$, $g(0)=0$, $|g'(u)|\leq c\ \forall u\in\mathbb R$, $\psi\in L_2(0,T;L_{\infty,+})$, $u_0\in L_{2,+}$, $f\in L_1(0,T;L_{2,+})$ and $u_0(x,y)= f(t,x,y)=0$ if $x>R$ for certain $R>0$, $\mu\equiv 0$. Then  problem \eqref{3.1}, \eqref{1.2}--\eqref{1.4} has a unique weak solution $u(t,x,y)$, such that $u\in X^{\rho(x)}(\Pi_T^+)$ for any $\alpha>0$ and $\rho(x)\equiv e^{2\alpha x}$.
\end{lemma}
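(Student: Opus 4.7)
The plan is to realize $u$ as a fixed point of a contraction on a small time interval, then iterate. Fix $\rho(x)\equiv e^{2\alpha x}$, so $\rho'(x)=2\alpha\rho(x)$ and hence $\rho^2(x)/\rho'(x)=\rho(x)/(2\alpha)$. Given $v\in X^{\rho(x)}(\Pi_{T_0}^+)$ for some $T_0\in(0,T]$ to be chosen, define $u=\Lambda v$ as the weak solution (from Lemma~\ref{L2.12}) of the linear problem
\begin{equation*}
u_t+bu_x+u_{xxx}+u_{xyy}=f-(g(v))_x-(\psi v)_x,
\end{equation*}
with the same initial and boundary data. I apply Lemma~\ref{L2.12} with $f_0\equiv f$, $f_1\equiv -g(v)-\psi v$, $f_2\equiv 0$. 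The compact support assumption on $u_0$ and $f$ immediately gives $u_0\in L_{2,+}^{\rho(x)}$ and $f\in L_1(0,T;L_{2,+}^{\rho(x)})$ with norms controlled by $e^{\alpha R}$ times the unweighted norms. For $f_1$, since $g(0)=0$ and $|g'|\leq c$ yield $|g(v)|\leq c|v|$, one obtains
\begin{equation*}
\|f_1\|_{L_2(0,T_0;L_{2,+}^{\rho(x)/(2\alpha)})}\leq C(\alpha)\bigl(T_0^{1/2}+\|\psi\|_{L_2(0,T_0;L_{\infty,+})}\bigr)\|v\|_{X^{\rho(x)}(\Pi_{T_0}^+)},
\end{equation*}
using the embedding $X^{\rho(x)}\hookrightarrow C([0,T_0];L_{2,+}^{\rho(x)})$. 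Thus $\Lambda$ is well-defined and maps $X^{\rho(x)}(\Pi_{T_0}^+)$ into itself, with estimate \eqref{2.46}.

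Next, the contraction property. For $v_1,v_2$ the difference $u_1-u_2=\Lambda v_1-\Lambda v_2$ solves the same linear problem with zero initial data, zero $f_0,f_2$, and $f_1=-(g(v_1)-g(v_2))-\psi(v_1-v_2)$. The Lipschitz bound on $g$ and boundedness of $\psi$ give
\begin{equation*}
\|u_1-u_2\|_{X^{\rho(x)}(\Pi_{T_0}^+)}\leq C(\alpha,T)\bigl(T_0^{1/2}+\|\psi\|_{L_2(0,T_0;L_{\infty,+})}\bigr)\|v_1-v_2\|_{X^{\rho(x)}(\Pi_{T_0}^+)}.
\end{equation*}
Since the factor in parentheses tends to zero with $T_0$ (absolute continuity of the integral of $\|\psi\|_{L_{\infty,+}}^2$), for $T_0$ sufficiently small $\Lambda$ is a strict contraction, producing a unique fixed point $u\in X^{\rho(x)}(\Pi_{T_0}^+)$, which is a weak solution to \eqref{3.1}, \eqref{1.2}--\eqref{1.4} on $[0,T_0]$.

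To extend to the whole interval $[0,T]$, I restart at $t=T_0$ with the new initial value $u(T_0,\cdot,\cdot)\in L_{2,+}^{\rho(x)}$ (note that $u_0(x,y)=f(t,x,y)=0$ for $x>R$ is used only to place the original data in the weighted space, and the new initial datum $u(T_0)$ already lies in $L_{2,+}^{\rho(x)}$). The length $T_0$ of the contraction step depends only on $\alpha$, $T$, $\|\psi\|_{L_2(0,T;L_{\infty,+})}$ and the Lipschitz constant of $g$, not on the size of the data, so finitely many iterations cover $[0,T]$. Uniqueness in $X^{\rho(x)}(\Pi_T^+)$ follows immediately from the same contraction estimate applied to the difference of two solutions.

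The main technical obstacle is the decomposition of the nonlinear forcing into the split form $f_0+f_{1x}+f_2$ required by Lemma~\ref{L2.12}: putting the whole expression $(g(v)+\psi v)_x$ into $f_1$ works precisely because the weight is exponential, so $\rho^2(x)/\rho'(x)$ and $\rho(x)$ are comparable and the Lipschitz estimates on $g$ transfer cleanly to the weighted $L_2$ norm. The only delicate point is that $\psi$ is only in $L_2$ in time, which is what forces the contraction step to be on a short interval and hence the iterative extension rather than a one-shot Picard argument.
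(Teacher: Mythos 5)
Your proposal is correct and follows essentially the same route as the paper: the same contraction mapping $\Lambda$ built on Lemma~\ref{L2.12} with the whole nonlinear forcing placed in the $f_{1x}$ slot, the same use of $\rho^2/\rho'\sim\rho$ for the exponential weight and of the Lipschitz bound $|g(v)|\leq c|v|$, and the same observation that the contraction constant is independent of the size of the data, so the local solution extends to $[0,T]$ in finitely many steps.
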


\begin{proof}
We apply the contraction principle. Fix $\alpha>0$ and $\rho(x)\equiv e^{2\alpha x}$. For $t_0\in(0,T]$ define a mapping $\Lambda$ on $X^{\rho(x)}(\Pi_{t_0}^+)$ as follows: $u=\Lambda v\in X^{\rho(x)}(\Pi_{t_0}^+)$ is a weak solution to a linear problem
\begin{equation}\label{3.2}
u_t+bu_x+u_{xxx}+u_{xyy} =f-g(v)_x -(\psi v)_x
\end{equation}
in $\Pi_{t_0}$ with initial and boundary conditions \eqref{1.2}--\eqref{1.4}.

Note that $\rho^2(\rho')^{-1/2} \sim \rho$, $|g(v)|\leq c|v|$ and, therefore,
\begin{gather*}
\|g(v)\|_{L_2(0,t_0;L_{2,+}^{\rho^2(x)/\rho'(x)})}\leq 
c\|v\|_{C([0,t_0];L_{2,+}^{\rho(x)})}<\infty, \\
\|\psi v\|_{L_2(0,t_0;L_{2,+}^{\rho^2(x)/\rho'(x)})}\leq c\|\psi\|_{L_2(0,t_0;L_{\infty,+})}
\|v\|_{C([0,t_0];L_{2,+}^{\rho(x)})}<\infty.
\end{gather*}
Thus, Lemma~\ref{L2.12} provides that the mapping $\Lambda$ exists. Moreover, for functions $v,\widetilde{v}\in X^{\rho(x)}(\Pi_{t_0}^+)$
\begin{equation*}
\|g(v)-g(\widetilde{v})\|_{L_2(0,t_0;L_{2,+}^{\rho^2(x)/\rho'(x)})}\leq c\|v-\widetilde{v}\|_{L_2(0,t_0;L_{2,+}^{\rho(x)})}  \leq ct_0^{1/2}\|v-\widetilde{v}\|_{C([0,t_0];L_{2,+}^{\rho(x)})},
\end{equation*}
$$
\|\psi(v-\widetilde{v})\|_{L_2(0,t_0;L_2^{\rho^2(x)/\rho'(x)})} 
\leq c\|\psi\|_{L_2(0,t_0;L_{\infty,+})} \|v-\widetilde{v}\|_{C([0,t_0];L_{2,+}^{\rho(x)})}.
$$
As a result, according to inequality \eqref{2.46} (where $f_2\equiv 0$)
\begin{equation*}
\|\Lambda v-\Lambda\widetilde{v}\|_{X^{\rho(x)}(\Pi_{t_0}^+)}\leq
c(T)\omega(t_0)\|v-\widetilde{v}\|_{X^{\rho(x)}(\Pi_{t_0}^+)},
\end{equation*}
where $\omega(t_0)\to 0$ as $t_0\to +0$ and $\omega$ depends on the properties of continuity of the primitives of the function $\|\psi(t,\cdot,\cdot)\|_{L_{\infty,+}}^2$ on $[0,T]$.
Since the constant in the right side of this inequality is uniform with respect to $u_0$ and $f$, one can construct the solution on the whole time segment $[0,T]$ by the standard argument.
\end{proof}

Now we pass to the results of existence in Theorem~\ref{T1.1}.

\begin{proof}[Proof of Existence Part of Theorem~\ref{T1.1}]
First of all we make zero boundary data for $x=0$. Let
\begin{equation}\label{3.3}
\psi(t,x,y)\equiv J(t,x,y;\mu)\eta(2-x),
\end{equation}
where for the construction of the boundary potential $J$ the function $\mu$ is extended to the whole strip $B$ in the same class. Then the results of Section~\ref{S2} provide that
\begin{equation}\label{3.4}
\left\{
\begin{aligned}
&\widetilde\psi \equiv\psi_t +b\psi_x+\psi_{xxx}+\psi_{xyy} \in C^\infty(\overline{\Pi}_T^+), \quad
\psi=0 \quad \text{for}\ x\geq 2,\\
&\psi\in C([0,T];L_{2,+})\cap L_2(0,T;\widetilde H_+^1)\cap L_2(0,T;W^1_{\infty,+}),\\
&\psi \in C([0,2];\widetilde H^{1/3,1}(B_T)), \quad \psi\big|_{x=0}=\mu,\quad
\psi_x \in C([0,2]; L_2(B_T)).
\end{aligned}
\right.
\end{equation}
Consider a function
\begin{equation}\label{3.5}
U(t,x,y) \equiv u(t,x,y) -\psi(t,x,y).
\end{equation}
Then $u\in X_w^{\rho(x)}(\Pi_T^+)$ is a weak solution to problem \eqref{1.1}--\eqref{1.4} iff $U\in X_w^{\rho(x)}(\Pi_T^+)$ is a weak solution to an initial-boundary value problem in $\Pi_T^+$ for an equation
\begin{equation}\label{3.6}
U_t+bU_x+U_{xxx}+U_{xyy}+UU_x+(\psi U)_x=F\equiv f-\widetilde\psi- \psi\psi_x,
\end{equation}
with initial and boundary conditions
\begin{equation}\label{3.7}
U\big|_{t=0} = U_0\equiv u_0 -\psi\big|_{t=0},\quad U\big|_{x=0}=0
\end{equation}
and the same boundary conditions on $\Omega_{T,+}$ as \eqref{1.4}. Note also that the functions $U_0$, $F$ satisfy the same assumptions as the corresponding functions $u_0$, $f$ in the hypothesis of the theorem.
 
For $h\in (0,1]$ consider a set of initial-boundary value problems in $\Pi_T^+$
\begin{equation}\label{3.8}
U_t+bU_x+U_{xxx}+U_{xyy}+\left(g_h(U)\right)_x+(\psi U)_x = F_h
\end{equation}
with boundary conditions \eqref{1.4} and
\begin{equation}\label{3.9}
U\big|_{t=0} = U_{0h},\quad U\big|_{x=0}=0,
\end{equation}
where 
\begin{equation}\label{3.10}
F_h(t,x,y) \equiv F(t,x,y)\eta(1/h-x), \quad U_{0h}(x,y)\equiv U_0\eta(1/h-x)
\end{equation}
and
\begin{equation*}
g_h(u)\equiv\int_0^u\Bigl[\theta\eta(2-h|\theta|)+\frac{2\sgn\theta}{h}\eta(h|\theta|-1)\Bigr]\,d\theta.
\end{equation*}
Note that $g_h(u)= u^2/2$ if $|u|\leq 1/h$, $|g'_h(u)|\leq 2/h\ \forall u\in\mathbb R$ and $|g'_h(u)|\leq 2|u|$ uniformly with respect to $h$.

According to Lemma~\ref{L3.1}, there exists a unique solution to this problem $U_h\in X^{e^{2\alpha x}}(\Pi_T^+)$ for any $\alpha>0$.

Next, establish appropriate estimates for functions $U_h$ uniform with respect to~$h$ (we drop the index $h$ in intermediate steps for simplicity). First, note that $g'(U)U_x, \psi U_x, \psi_x U, F \in L_1(0,T;L_{2,+}^{\rho(x)})$ and so the hypothesis of Lemma~\ref{L2.12} is satisfied (for $f_1=f_2\equiv 0$). Write down the analogue of equality \eqref{2.47} for $\rho\equiv 1$, then:
\begin{equation}\label{3.11}
\iint U^2 \,dxdy \leq \iint U_0^2\,dxdy 
+\int_0^t \!\!\iint \bigl(2F- 2(g(U))_x-\psi_x U \bigr)U\,dxdyd\tau.
\end{equation}
Since
\begin{equation}\label{3.12}
(g(U))_xU=\partial_x\Bigl(\int_0^U g'(\theta)\theta \,d\theta\Bigr) 
\end{equation}
we derive that
\begin{equation}\label{3.13}
\iint (g(U))_x U \,dxdy = 0.
\end{equation}
Therefore, uniformly with respect to $h$ (and also uniformly with respect to $L$)
\begin{equation}\label{3.14}
\|u_h\|_{C([0,T];L_{2,+})} \leq c.
\end{equation}

Next, equalities \eqref{2.47} and \eqref{3.12} provide that
\begin{multline}\label{3.15}
\iint U^2\rho \,dxdy + \int_0^t\!\! \iint (3U_x^2 +U_y^2)\rho'\,dxdyd\tau 
\leq \iint U_0^2\rho\,dxdy  \\ +c \int_0^t \!\!\iint U^2\rho\, dxdyd\tau
+2\int_0^t \!\!\iint FU\rho\,dxdyd\tau \\ 
+\int_0^t \!\!\iint (\psi\rho'-\psi_x\rho)U^2\,dxdyd\tau
+2\int_0^t \!\!\iint \Bigl(\int_0^U g'(\theta)\theta \,d\theta \Bigr) \rho' \,dxdyd\tau.
\end{multline}
Note that 
\begin{equation}\label{3.16}
\Bigl|\int_0^U g'(\theta)\theta \,d\theta \Bigr| \leq c|U|^3.
\end{equation}
Applying interpolating inequality \eqref{1.15} for $\rho_1=\rho_2\equiv\rho'$, we obtain that
\begin{multline}\label{3.17}
\iint |U|^3\rho'\,dxdy \leq \Bigl(\iint U^2\,dxdy \iint U^4(\rho')^2\,dxdy\Bigr)^{1/2}
\leq c\Bigl(\iint U^2\,dxdy\Bigr)^{1/2} \\ \times \Bigl[\Bigl(\iint |DU|^2\rho'dxdy\Bigr)^{1/2}
\Bigl(\iint U^2\rho' \,dxdy\Bigr)^{1/2}
+\iint U^2\rho' \,dxdy\Bigr]
\end{multline}
(note that here the constant $c$ is also uniform with respect to $L$ in the cases a) and c)).
Since the norm of the functions $u_h$ in the space $L_2$ is already  estimated in \eqref{3.14}, it follows from \eqref{3.15}--\eqref{3.17} that uniformly with respect to $h$
\begin{equation}\label{3.18}
\|u_h\|_{X^{\rho(x)}(\Pi_T^+)} \leq c.
\end{equation}

Finally, write down the analogue of \eqref{3.15}, where $\rho(x)$ is substituted by \linebreak $\rho_0(x-x_0)$ for any $x_0\geq 0$. Then it easily follows that (see \eqref{1.9})
\begin{equation}\label{3.19}
\lambda^+ (|Du_h|;T)\leq c.
\end{equation}

From equation \eqref{3.8} itself, estimate \eqref{3.14} and the well-known embedding $L_{1,+}\subset H^{-2}_+$, it follows that uniformly with respect to $h$
\begin{equation}\label{3.20}
\|u_{ht}\|_{L_1(0,T;H^{-3}_+)}\leq c.
\end{equation}

Estimates \eqref{3.18}--\eqref{3.20} by the standard argument provide existence of a weak solution to problem \eqref{1.1}--\eqref{1.4} $u\in X^{\rho(x)}(\Pi_T)$, $\lambda^+(|Du|;T) <\infty$ (see, for example, \cite{F12}) as a limit of functions $u_h$ when
$h\to +0$.
\end{proof}

We now proceed to solutions in spaces $\widetilde H^{1,\rho(x)}$ and first estimate a lemma analogous to Lemma~\ref{L3.1}.

\begin{lemma}\label{L3.2}
Let $g(u)\equiv u^2/2$, $\psi\in L_2(0,T;W^1_{\infty,+})\cap L_2(0,T;\widetilde H_+^2)$, $u_0\in \widetilde H^1_+$, $u_0\big|_{x=0}\equiv 0$,  $f\in L_2(0,T;\widetilde H_+^1)$ and $u_0(x,y)= f(t,x,y)=\psi(t,x,y)=0$ if $x>R$ for certain $R>0$, $\mu\equiv 0$. Then  problem \eqref{3.1}, \eqref{1.2}--\eqref{1.4} has a unique weak solution $u(t,x,y)$, such that $u\in X^{1,\rho(x)}(\Pi_T^+)$ for any $\alpha>0$ and $\rho(x)\equiv e^{2\alpha x}$.
\end{lemma}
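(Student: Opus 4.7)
The plan is to apply the contraction principle in $X^{1,\rho(x)}(\Pi_{t_0}^+)$, paralleling Lemma~\ref{L3.1} but with Lemma~\ref{L2.13} replacing Lemma~\ref{L2.12}. Fix $\alpha>0$, set $\rho(x)\equiv e^{2\alpha x}$ (so $\rho'=2\alpha\rho$ and $\rho^2/\rho'\sim\rho$), and for $t_0\in(0,T]$ define $\Lambda v\equiv u$ by solving, via Lemma~\ref{L2.13}, the linear problem
\begin{equation*}
u_t+bu_x+u_{xxx}+u_{xyy}=f-vv_x-\psi_x v-\psi v_x
\end{equation*}
with the prescribed initial and boundary conditions, the right-hand side being split as $f_0\equiv f\in L_2(0,T;\widetilde H^{1,\rho(x)}_+)$, $f_1\equiv -vv_x-\psi_x v-\psi v_x$.

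The inclusion $f_1\in L_2(0,t_0;L_{2,+}^{\rho^2(x)/\rho'(x)})$ is obtained from the embedding $\widetilde H^{2,\rho'(x)}_+\subset H^2_+$ (valid because $\rho'\geq 2\alpha>0$) together with \eqref{1.18}: these yield $v\in L_2(0,t_0;L_{\infty,+})$, and H\"older against $v_x\rho^{1/2}\in L_\infty(0,t_0;L_{2,+})$ controls $vv_x\rho^{1/2}$ in $L_2(\Pi_{t_0}^+)$, while the $\psi$-terms follow directly from $\psi\in L_2(0,T;W^1_{\infty,+})$. For the contraction, put $w=v-\widetilde v$ and $W=v+\widetilde v$; the difference $\Lambda v-\Lambda\widetilde v$ solves the same linear problem with zero initial data and source $-\tfrac12(Ww_x+W_x w)-\psi_x w-\psi w_x$, and repeating the previous bounds together with \eqref{2.51} should produce
\begin{equation*}
\|\Lambda v-\Lambda\widetilde v\|_{X^{1,\rho(x)}(\Pi_{t_0}^+)}\leq\omega(t_0)\bigl(1+\|v\|_{X^{1,\rho(x)}(\Pi_{t_0}^+)}+\|\widetilde v\|_{X^{1,\rho(x)}(\Pi_{t_0}^+)}\bigr)\|w\|_{X^{1,\rho(x)}(\Pi_{t_0}^+)}
\end{equation*}
with $\omega(t_0)\to 0$ as $t_0\to+0$. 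The small factor is obtained by the H\"older bound $\|W\|_{L_2(0,t_0;L_{\infty,+})}\leq t_0^{1/4}\|W\|_{L_4(0,t_0;L_{\infty,+})}$, combined with the 2D interpolation inclusion $L_\infty(0,T;H^1_+)\cap L_2(0,T;H^2_+)\subset L_4(0,T;H^{3/2}_+)\subset L_4(0,T;L_{\infty,+})$; the $\psi$-pieces acquire their smallness by absolute continuity of $\int_0^{t_0}\|\psi(\tau,\cdot,\cdot)\|_{W^1_{\infty,+}}^2 d\tau$.

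Choosing $R$ in terms of the data and then $t_0$ small in terms of $R$ yields a unique local solution by the Banach fixed-point theorem. For the extension to $[0,T]$ I will use the $L_{2,+}^{\rho(x)}$ energy identity obtained by testing \eqref{3.1} against $2u\rho$, in which the cubic term $\int u^3\rho'\,dxdy$ is absorbed via Lemma~\ref{L1.1} exactly as in \eqref{3.15}--\eqref{3.17}; this gives an a priori $X^{\rho(x)}(\Pi_T^+)$ bound on $u$, which, fed into the $\widetilde H^{1,\rho(x)}_+$ energy identity \eqref{2.52} with the nonlinear term treated as an $f_1$-type source, upgrades to an a priori $X^{1,\rho(x)}(\Pi_T^+)$ bound precluding finite-time blow-up of $\|u(t,\cdot,\cdot)\|_{\widetilde H^{1,\rho(x)}_+}$.

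The main obstacle I foresee is the $W_x w$ cross term: since $W_x$ has the same regularity as the unknown $w_x$, the $t_0$-smallness has to be extracted by placing $w$ (not $W_x$) into $L_2(0,t_0;L_{\infty,+})$, which requires the 2D interpolation embedding above and careful bookkeeping of which factor of each bilinear piece carries the temporal smallness; a related subtlety is verifying that the $\widetilde H^{1,\rho(x)}_+$-level a priori bound closes globally in time, since the quadratic nonlinearity produces source terms that must be dominated by the already-obtained $L_{2,+}^{\rho(x)}$ bound through \eqref{1.15}.
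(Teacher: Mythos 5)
Your contraction step is essentially the paper's: the mapping $\Lambda$ is defined through Lemma~\ref{L2.13} with the nonlinearity placed in the $f_1$-slot (possible since $\rho^2/\rho'\sim\rho$ for $\rho=e^{2\alpha x}$), and the factor $t_0^{1/4}$ is extracted in \eqref{3.22}--\eqref{3.23} by the weighted interpolation \eqref{1.15}; your route through $L_4(0,t_0;L_{\infty,+})$ yields the same power of $t_0$ and is an acceptable variant. The local well-posedness part of your proposal is therefore sound.

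The genuine gap is in the global continuation. You propose to feed the nonlinearity into \eqref{2.52} ``as an $f_1$-type source'' and to dominate everything by the already-obtained $L_{2,+}^{\rho(x)}$ bound via \eqref{1.15}. This does not close. The $f_1$-term in \eqref{2.52} with $f_1=-uu_x$ produces, after expansion, the integral $\iint u\,u_x u_{xx}\,\rho\,dxdy$, equivalently (after integration by parts) $\iint u_x^3\,\rho\,dxdy$ plus harmless pieces. In this cubic every factor carries a derivative, so the unweighted $L_2$ bound on $u$ buys you nothing: the best \eqref{1.15} gives is $\iint|u_x|^3\rho \lesssim \bigl\||Du_x|\rho^{1/2}\bigr\|_{L_{2,+}}\bigl\|u_x\rho^{1/2}\bigr\|_{L_{2,+}}^2+\dots$, and after absorbing the dissipative factor you are left with $\bigl\|u_x\rho^{1/2}\bigr\|_{L_{2,+}}^4$, i.e.\ a Riccati-type inequality $E'\lesssim E^2$ for $E=\iint(u_x^2+u_y^2)\rho\,dxdy$, which only controls $E$ on a short time interval. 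The paper's resolution is structural, not technical: it sums the $H^1$-level identity \eqref{2.52} (with $\gamma\equiv1$) with the identity \eqref{2.54} of Lemma~\ref{L2.14}, i.e.\ works with the modified functional $\iint(u_x^2+u_y^2-\tfrac13u^3)\rho\,dxdy$. In the resulting inequality \eqref{3.28} the cubic-in-derivatives terms cancel, and every surviving nonlinear term (e.g.\ $\iint u\,u_x^2\rho'$, $\iint(u_{xx}+u_{yy})u^2\rho'$, $\iint u^4\rho'$) contains at least one underived factor of $u$; estimates \eqref{3.29}--\eqref{3.34} then bound each by $\varepsilon\,(\text{dissipation})+c\,(E+1)$ using the conservation-law bound \eqref{3.27}, and Gronwall gives \eqref{3.26}. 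Without invoking \eqref{2.54} (or an equivalent cancellation) your a priori estimate would only be local in time, which defeats the purpose of the lemma.
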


\begin{proof}
Fix $\alpha>0$ and $\rho(x)\equiv e^{2\alpha x}$. For $t_0\in(0,T]$ define a mapping $\Lambda$ on $X^{1,\rho(x)}(\Pi_{t_0}^+)$ as follows: $u=\Lambda v\in X^{1,\rho(x)}(\Pi_{t_0}^+)$ is a weak solution to a linear problem
\begin{equation}\label{3.21}
u_t+bu_x+u_{xxx}+u_{xyy} =f-vv_x -(\psi v)_x 
\end{equation}
in $\Pi_{t_0}$ with initial and boundary conditions \eqref{1.2}--\eqref{1.4}.

Note that by virtue of \eqref{1.15} for $\rho_1=\rho_2\equiv \rho\geq 1$
\begin{multline}\label{3.22}
\|vv_x\|_{L_2(0,t_0;L_{2,+}^{\rho(x)})} \leq
\Bigl[\int_0^{t_0}\|v_x\rho^{1/2}\|_{L_{4,+}}^2 \|v\rho^{1/2}\|_{L_{4,+}}^2\,dt\Bigr]^{1/2} \\ \leq c\Bigl[\int_0^{t_0} \Bigl(\bigl\||D v_x|\bigr\|_{L_{2,+}^{\rho(x)}}\|v_x\|_{L_{2,+}^{\rho(x)}}+
\|v_x\|_{L_{2,+}^{\rho(x)}}^{2}\Bigr) \bigl\| |Dv|+|v|\bigr\|_{L_{2,+}^{\rho(x)}}^{2}\,dt\Bigr]^{1/2} \\ \leq
c_1 t_0^{1/4} \|v\|_{L_2(0,t_0;\widetilde H_+^{2,\rho(x)})}^{1/2}\|v\|_{C([0,t_0];\widetilde H_+^{1,\rho(x)})}^{3/2} \leq
c_1 t_0^{1/4} \|v\|_{X^{1,\rho(x)}(\Pi_{t_0}^+)}^2
\end{multline}
and similarly
\begin{equation}\label{3.23}
\|vv_x-\widetilde v\widetilde v_x\|_{L_2(0,t_0;L_{2,+}^{\rho(x)})} \leq 
ct_0^{1/4}\bigl(\|v\|_{X^{1,\rho(x)}(\Pi^+_{t_0})}+\|\widetilde v\|_{X^{1,\rho(x)}(\Pi_{t_0}^+)}\bigr) 
\|v-\widetilde v\|_{X^{1,\rho(x)}(\Pi_{t_0}^+)},
\end{equation}
$$
\|(\psi v)_x\|_{L_2(0,t_0;L_{2,+}^{\rho(x)})} \leq c \bigl(\|\psi\|_{L_2(0,t_0;L_{\infty,+})}
+\|\psi_x\|_{L_2(0,t_0;L_{\infty,+})}\bigr)||v\|_{C([0,t_0];\widetilde H_+^{1,\rho(x)})}.
$$
In particular, the hypothesis of Lemma~\ref{L2.13} is satisfied (since $\rho^2(\rho')^{-1/2} \sim \rho$) and, therefore, the mapping $\Lambda$ exists.

Moreover, by virtue of \eqref{2.51}
\begin{multline}\label{3.24}
\|\Lambda v\|_{X^{1,\rho(x)}(\Pi_{t_0}^+)}\leq 
c(T)\Bigl(\|u_0\|_{\widetilde H^{1,\rho(x)}}+
 \|f\|_{L_1(0,T;\widetilde H^{1,\rho(x)})} +
\omega(t_0)\|v\|_{X^{1,\rho(x)}(\Pi_{t_0}^+)}  \\ 
+t_0^{1/4}\|v\|_{X^{1,\rho(x)}(\Pi_{t_0}^+)}^2\Bigr),
\end{multline}
\begin{multline}\label{3.25}
\|\Lambda v - \Lambda\widetilde v\|_{X^{1,\rho(x)}(\Pi_{t_0}^+)}\leq c(T) 
\Bigl(\omega(t_0)\|v-\widetilde v\|_{X^{1,\rho(x)}(\Pi_{t_0}^+)}  \\ 
+t_0^{1/4}\bigl(\|v\|_{X^{1,\rho(x)}(\Pi_{t_0}^+)}+
\|\widetilde v\|_{X^{1,\rho(x)}(\Pi_{t_0}^+)}\bigr)
\|v-\widetilde v\|_{X^{1,\rho(x)}(\Pi_{t_0}^+)} \Bigr),
\end{multline}
where $\omega(t_0)\to 0$ as $t_0\to +0$ and $\omega$ depends on the properties of continuity of the primitives of the functions $\|\psi(t,\cdot,\cdot)\|_{L_{\infty,+}}^2$ and 
$\|\psi_x(t,\cdot,\cdot)\|_{L_{\infty,+}}^2$ on $[0,T]$.

Existence of the unique weak solution to the considered problem in the space 
$X^{1,\rho(x)}(\Pi_{t_0}^+)$ on the time interval $[0,t_0]$, depending on 
$\|u_0\|_{\widetilde H_+^{1,\rho(x)}}$, follows from \eqref{3.24}, \eqref{3.25} by the standard argument.

In order to finish the proof, we establish the following a priori estimate: if $u\in X^{1,\rho(x)}(\Pi_{T'}^+)$ is a solution to the considered problem for some $T'\in (0,T]$ and $\psi(t,x,y)=0$ for $x>R$, then
\begin{multline}\label{3.26}
\|u\|_{X^{1,\rho(x)}(\Pi_{T'}^+)} \\ \leq c\bigl(\|u_0\|_{\widetilde H^{1,\rho(x)}},
\|f\|_{L_2(0,T;\widetilde H^{1,\rho(x)})}, \|\psi\|_{L_2(0,T;W^1_{\infty,+})}, 
\|\psi\|_{L_2(0,T;\widetilde H_+^2)}\bigr).
\end{multline}

First of all note that similarly to \eqref{3.14}, \eqref{3.18} one can derive from \eqref{2.47} that 
\begin{equation}\label{3.27}
\|u\|_{X^{\rho(x)}(\Pi_{T'}^+)} \leq c.
\end{equation}
Next, since the hypotheses of Lemma~\ref{L2.13} and, consequently, Lemma~\ref{L2.14} are satisfied, write down the corresponding analogues of inequality \eqref{2.52} for $\gamma\equiv 1$, equality \eqref{2.54} and sum them, then 
\begin{multline}\label{3.28}
\iint\bigl(u_x^2+u_y^2-\frac13 u^3\bigr)\rho \,dxdy
+\int_0^t \!\!\iint(3u_{xx}^2+4u^2_{xy}+u^2_{yy})\rho' \,dxdyd\tau \\ \leq
\iint\bigl(u_{0x}^2+u_{0y}^2-\frac{u_0^3}3\bigr)\rho\,dxdy 
+c\int_0^t \!\!\iint(u^2_x+u^2_y)\rho\,dxdyd\tau 
+2\int_0^t \!\!\iint uu_x^2\rho'\,dxdyd\tau \\
-\int_0^t \!\! \iint (u_{xx}+u_{yy})u^2\rho'\,dxdyd\tau
+\int_0^t \iint(2f_{x}u_x+2f_{y}u_y-fu^2)\rho \,dxdyd\tau  \\
+\int_{B_t}  f^2\big|_{x=0}\,dyd\tau
+c\iint_{B_t} u^2_x\big|_{x=0}\,dyd\tau
-\int_0^t \!\!\iint \Bigl(\frac{b}3 u^3+ \frac14 u^4\Bigr)\rho'\,dxdyd\tau \\
-\iint_{B_t} (\psi u_x^2\rho)\big|_{x=0}\,dyd\tau 
-\int_0^t \!\!\iint \psi_x(3u_x^2+u_y^2)\rho\,dxdyd\tau
+\int_0^t \!\!\iint  \psi(u_x^2+u_y^2)\rho'\,dxdyd\tau \\
-2\int_0^t \!\!\iint \psi_y u_xu_y\rho\,dxdyd\tau
-2\int_0^t \!\!\iint (\psi_{xx}uu_x +\psi_{xy}uu_y)\rho\,dxdyd\tau.
\end{multline}
Consider the integrals from \eqref{3.28}, where for the sake of the use in the sequel assume only that $\rho$ and $\rho'$ are admissible weight functions. Similarly to \eqref{3.17}
\begin{gather}\label{3.29}
\iint |u|^3\rho\,dxdy \leq c\Bigl(\iint |Du|^2\rho\,dxdy\Bigr)^{1/2}+c,\\ \label{3.30}
\iint u^4\rho^2\,dxdy \leq c\iint |Du|^2\rho\,dxdy+c,
\end{gather}
where the already obtained estimated \eqref{3.27} on $\|u\|_{C([0,T];L_{2,+}^{\rho(x)})}$ is also used. Next,
\begin{multline}\label{3.31}
\Bigl| \iint uu_x^2\rho'\,dxdy\Bigr|
\leq \Bigl(\iint u^2\,dxdy\Bigr)^{1/2}\Bigl(\iint u_x^4(\rho')^2\,dxdy\Bigr)^{1/2} \\
\leq c \Bigl[\Bigl(\iint |Du_x|^2\rho'\,dxdy\Bigr)^{1/2}\Bigl(\iint u_x^2\rho\,dxdy\Bigr)^{1/2}
+\iint u_x^2\rho\,dxdy\Bigr],
\end{multline}
\begin{equation}\label{3.32}
\Bigl|\iint (u_{xx}+u_{yy})u^2\rho'\,dxdy\Bigr|
\leq c\Bigl(\iint(u_{xx}^2+u_{yy}^2)\rho'\,dxdy\Bigr)^{1/2} \Bigl(\iint u^4\rho^2\,dxdy\Bigr)^{1/2}.
\end{equation}
Interpolating inequality \eqref{1.17}
provides that
\begin{multline}\label{3.33}
\int_0^L \bigl((1+|\psi|)u_x^2\bigr)\big|_{x=0}\,dy \leq \varepsilon \iint u_{xx}^2\rho'\,dxdy \\ +
c(\varepsilon)\bigl(1+\sup\limits_{(x,y)\in\Sigma_+} \psi^2\bigr) \iint u_x^2\rho\,dxdy,
\end{multline}
where $\varepsilon>0$ can be chosen arbitrarily small. Finally, since $\rho(x)\leq c(R)\rho'(x)$ for $x\in [0,R]$
\begin{multline}\label{3.34}
\Bigl|\iint \psi_{xx}uu_x\rho\,dxdy\Bigr|   \leq 
c\Bigl(\iint \psi_{xx}^2\,dxdy\Bigr)^{1/2} \Bigl(\iint u^4\rho^2\,dxdy
\iint u_x^4\rho'\rho\,dxdy\Bigr)^{1/4} \\ \leq
c_1\Bigl(\iint\! \psi_{xx}^2\,dxdy\Bigr)^{1/2} \Bigl[\Bigl(\iint |Du_x|^2\rho'\,dxdy \iint u_x^2\rho\,dxdy\Bigr)^{1/4}\!\!+\Bigl(\iint u_x^2\rho\,dxdy\Bigr)^{1/2}\Bigr] \\
\times  \Bigl[\Bigl(\iint |Du|^2\rho\,dxdy \Bigr)^{1/4}+1\Bigr] \leq
\varepsilon \iint |Du_x|^2\rho'\,dxdy  \\ +
c(\varepsilon) \Bigl[\iint \psi_{xx}^2\,dxdy+1\Bigr]
\Bigl( \iint |Du|^2\rho\,dxdy +1\Bigr).
\end{multline}
Other integrals in \eqref{3.28} are estimated in a obvious way and \eqref{3.26} follows.
\end{proof}

\begin{proof}[Proof of Existence Part of Theorem~\ref{T1.2}]
Introduce the function $\psi$ by formula \eqref{3.3}. Then in addition to properties \eqref{3.4} it follows from the results of Section~\ref{S2} that
\begin{equation}\label{3.35}
\left\{
\begin{aligned}
&\psi\in C([0,T];\widetilde H_+^1)\cap L_2(0,T;\widetilde H_+^2)\cap  C([0,2];\widetilde H^{2/3,2}(B_T)),\\
&\psi_x \in C([0,2];\widetilde H^{1/3,2}(B_T)), \quad \psi_{xx} \in C([0,2];L_2(B_T)).
\end{aligned}
\right.
\end{equation}
Again introduce the function $U(t,x,y)$ by formula \eqref{3.5} and consider problem \eqref{3.6}, \eqref{3.7}, \eqref{1.4} instead of \eqref{1.1}--\eqref{1.4}. Note that here \eqref{3.4}, \eqref{3.35} provide that that the properties of the functions  $U_0$, $F$, are the same as the corresponding ones for the functions $u_0$, $f$ in the hypothesis of the theorem.
 
For $h\in (0,1]$, consider a set of initial-boundary value problems in $\Pi_T^+$
\begin{equation}\label{3.36}
U_t+bU_x+U_{xxx}+U_{xyy}+UU_x+(\psi U)_x = F_h
\end{equation}
with boundary conditions \eqref{3.7}, \eqref{1.4}, where $F_h$ and $U_{0h}$ are given by \eqref{3.10}. 

Repeating the argument in \eqref{3.11}, \eqref{3.13}, \eqref{3.15}--\eqref{3.17} for $g(u)\equiv u^2/2$ and \eqref{3.28}--\eqref{3.34} we derive that uniformly with respect to $h$
\begin{equation}\label{3.37}
\|u_h\|_{X^{1,\rho(x)}(\Pi_T^+)} \leq c.
\end{equation}
Similarly to \eqref{3.19} one can obtain that 
\begin{equation}\label{3.38}
\lambda^+ (|D^2u_h|;T) \leq c.
\end{equation}
Estimates \eqref{3.37}, \eqref{3.38} and \eqref{3.20} provide existence of a weak solution to the considered problem $u\in X^{1,\rho(x)}(\Pi_T^+)$.
\end{proof}

Finally, consider regular solutions.

\begin{lemma}\label{L3.3}
Let $g(u)\equiv u^2/2$, $\mu\equiv 0$, the functions $u_0$ and $f$ satisfy the hypothesis of Theorem~\ref{T1.3}, $\psi\in X^{3,\rho(x)}(\Pi_T^+)\cap L_2(0,T;W_{\infty,+}^3)$, $\psi_t\in L_2(0,T;L_{\infty,+})$ and $\psi(t,x,y)=0$ if $x>R$ for certain $R>0$.  Then  problem \eqref{3.1}, \eqref{1.2}--\eqref{1.4} has a unique solution $u\in X^{3,\rho(x)}(\Pi_T^+)$.
\end{lemma}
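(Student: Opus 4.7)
I would follow the scheme of Lemmas~\ref{L3.1} and~\ref{L3.2}: construct the solution locally in $X^{3,\rho(x)}(\Pi^+_{t_0})$ by a contraction mapping for small $t_0$, then extend it to $[0,T]$ via an a priori estimate. Given $v\in X^{3,\rho(x)}(\Pi^+_{t_0})$, let $u=\Lambda v$ be the weak solution of the linear problem
\begin{equation*}
u_t+bu_x+u_{xxx}+u_{xyy}=f-(v^2/2)_x-(\psi v)_x
\end{equation*}
with initial-boundary data \eqref{1.2}--\eqref{1.4} and $\mu\equiv 0$, produced by Lemma~\ref{L2.17} after decomposing the right-hand side as $f_0+f_{1x}$ with $f_0=f$ and $f_1=-v^2/2-\psi v$. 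The verification that $f_1, f_{1x}, f_{1xyy}, f_{1t}$ lie in $L_2(0,T;L_{2,+}^{\rho^2(x)/\rho'(x)})$ is delicate but follows from the regularity $v\in X^{3,\rho(x)}$, the hypotheses $\psi\in X^{3,\rho(x)}\cap L_2(0,T;W^3_{\infty,+})$ and $\psi_t\in L_2(0,T;L_{\infty,+})$, together with Lemma~\ref{L1.1} applied with $\rho_1=\rho'$, $\rho_2=\rho$ and the embedding \eqref{1.18}. Note that $f\in L_1(0,T;\widetilde H_+^{(0,3),\rho(x)})$ and $f_t\in L_1(0,T;L_{2,+}^{\rho(x)})$ by assumption, so $f_0=f$ fits the hypothesis on $f_0$ in Lemma~\ref{L2.17}.

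Contraction on a ball of $X^{3,\rho(x)}(\Pi^+_{t_0})$ is obtained in the style of \eqref{3.22}--\eqref{3.25}: writing $v^2-\widetilde v^2=(v+\widetilde v)(v-\widetilde v)$ and $\psi v-\psi\widetilde v=\psi(v-\widetilde v)$, the differences $f_1(v)-f_1(\widetilde v)$ and their derivatives gain a factor $t_0^{\varepsilon}$ (for some $\varepsilon>0$) from H\"older's inequality in $t$ when $v,\widetilde v$ are confined to a fixed ball of $X^{3,\rho(x)}(\Pi^+_{t_0})$. Hence $\Lambda$ is a contraction on such a ball for small $t_0$ depending on the data, and its unique fixed point is the sought solution on $[0,t_0]$.

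The final step is an a priori bound on $\|u\|_{X^{3,\rho(x)}(\Pi^+_{T'})}$ uniform in $T'\le T$ permitting continuation to the full interval. The $X^{\rho(x)}$, $X^{1,\rho(x)}$ and $X^{2,\rho(x)}$ bounds follow from the energy arguments already exploited in the proofs of Theorems~\ref{T1.1} and~\ref{T1.2} (via Lemmas~\ref{L2.12}--\ref{L2.16} and the interpolation inequality \eqref{1.15} to absorb the quadratic terms). To reach $X^{3,\rho(x)}$ I would differentiate \eqref{3.1} in $t$ and apply Lemma~\ref{L2.15} to $u_t$, then recover the remaining spatial derivatives of order three through the algebraic step of Lemma~\ref{L2.17}, using the equation in the form $u_{xxx}=f-u_t-bu_x-u_{xyy}-uu_x-(\psi u)_x$. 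The main obstacle is controlling the time-derivative of the nonlinearity: $\partial_t(uu_x+(\psi u)_x)$ produces $u_t u_x+uu_{tx}+\psi_t u+\psi u_t$ together with their $x$-derivatives, and these must be placed into $L_1(0,T;L_{2,+}^{\rho(x)})$ or the $f_1$-type norms. This requires an $L_\infty$-bound on $u_x$ coming from the $X^{2,\rho(x)}$-estimate via \eqref{1.18}, combined with the already-established control of $u_t$ in $C([0,T];L_{2,+}^{\rho(x)})$ and $u_{tx}$ in $L_2(0,T;L_{2,+}^{\rho'(x)})$; the terms carrying $\psi$ are handled by the assumed $W^3_{\infty,+}$- and $L_{\infty,+}$-regularity of $\psi$ and $\psi_t$. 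Once this estimate closes, the local solution extends to $[0,T]$ by the standard continuation procedure.
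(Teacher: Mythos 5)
Your plan follows the paper's proof essentially verbatim: the same contraction mapping built on Lemma~\ref{L2.17} with the decomposition $f_0=f$, $f_1=-(v^2/2+\psi v)$, the same $t_0^{1/2}$-type smallness for the contraction, and the same layered a priori estimates ($X^{1,\rho(x)}$, then $u_t$ via Lemma~\ref{L2.15}, then $X^{2,\rho(x)}$ via Lemma~\ref{L2.16}, then $X^{3,\rho(x)}$ via Lemma~\ref{L2.17}). The only minor remark is that in the paper the $u_t$-estimate must precede the $X^{2,\rho(x)}$-estimate, since inequality \eqref{2.55} explicitly involves $\|u_t\|_{C([0,t];L_{2,+}^{\rho(x)})}$; your text already uses this control, so the argument is sound.
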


\begin{proof}
For $t_0\in(0,T]$, $v\in X^{3,\rho(x)}(\Pi_{t_0}^+)$ let $u=\Lambda v\in X^{3,\rho(x)}(\Pi_{t_0}^+)$ be a solution to a linear problem \eqref{3.21}, \eqref{1.2}--\eqref{1.4}.

Apply Lemma~\ref{L2.17} where $f$ stands for $f_0$, $-(v^2/2+\psi v)$ -- for $f_1$. We have:
\begin{multline}\label{3.39}
\|vv_x+\psi v_x+\psi_x v\|_{C[0,t_0];L_{2,+}^{\rho(x)})} \leq
\|u_0u_{0x}+\psi\big|_{t=0}u_{0x}+\psi_x\big|_{t=0}u_0\|_{L_{2,+}^{\rho(x)}} \\ +
\|(vv_x)_t+(\psi v)_{tx}\|_{L_1(0,t_0;L_{2,+}^{\rho(x)})}
\end{multline}
and with the use of \eqref{1.18} derive that 
\begin{gather}\label{3.40}
\|u_0u_{0x}\|_{L_{2,+}^{\rho(x)}} \leq
c\|u_0\|_{L_{\infty,+}}\|u_{0x}\|_{L_{2,+}^{\rho(x)}} \leq
c_1\|u_0\|_{\widetilde H_+^{3,\rho(x)}}^2, \\
\label{3.41}
\|\psi\big|_{t=0}u_{0x}+\psi_x\big|_{t=0}u_0\|_{L_{2,+}^{\rho(x)}} \leq
c\bigl(\|\psi_x\big|_{t=0}\|_{L_{2,+}}+\|\psi\big|_{t=0}\|_{L_{\infty,+}}\bigr)
\|u_0\|_{\widetilde H_+^{3,\rho(x)}},
\end{gather}
since $\rho'(x)\geq 1$
\begin{multline}\label{3.42}
\|vv_{tx}\|_{L_1(0,t_0;L_{2,+}^{\rho(x)})} \leq 
\int_0^{t_0} \|v\rho^{1/2}\|_{L_{\infty,+}}\|v_{tx}\|_{L_{2,+}^{\rho'(x)}}\,dt \\ \leq
ct_0^{1/2}\|v\|_{X^{2,\rho(x)}(\Pi_{t_0}^+)}
\|v\|_{X^{3,\rho(x)}(\Pi_{t_0}^+)},
\end{multline}
\begin{multline}\label{3.43}
\|v_xv_{t}\|_{L_1(0,t_0;L_{2,+}^{\rho(x)})} \leq
\int_0^{t_0} \|v_x\rho^{1/2}\|_{L_{4,+}}\|v_t(\rho'\rho)^{1/4}\|_{L_{4,+}}\,dt \\ \leq
ct_0^{1/2}\|v\|_{X^{2,\rho(x)}(\Pi_{t_0}^+)}
\|v\|_{X^{3,\rho(x)}(\Pi_{t_0}^+)}
\end{multline}
and similarly
\begin{equation}\label{3.44}
\|(\psi v)_{tx}\|_{L_1(0,t_0;L_{2,+}^{\rho(x)})} \leq 
ct_0^{1/2}\|\psi\|_{X^{3,\rho(x)}(\Pi_{T}^+)}
\|v\|_{X^{3,\rho(x)}(\Pi_{t_0}^+)}.
\end{equation}
Next,
\begin{multline}\label{3.45}
\|vv_{t}\|_{L_2(0,t_0;L_{2,+}^{\rho^2(x)/\rho'(x)})} \leq 
\Bigl(\int_0^{t_0} \|v\rho^{1/2}\|^2_{L_{\infty,+}}\|v_{t}\|^2_{L_{2,+}^{\rho(x)}}\,dt\Bigr)^{1/2} \\ \leq
ct_0^{1/2}\|v\|_{X^{2,\rho(x)}(\Pi_{t_0}^+)}
\|v\|_{X^{3,\rho(x)}(\Pi_{t_0}^+)},
\end{multline}
$(vv_x)_{yy} = vv_{xyy} +2v_yv_{xy}+v_xv_{yy}$, where similarly to \eqref{3.45}
\begin{equation}\label{3.46}
\|vv_{xyy}\|_{L_2(0,t_0;L_{2,+}^{\rho^2(x)/\rho'(x)})} \leq 
ct_0^{1/2}\|v\|_{X^{2,\rho(x)}(\Pi_{t_0}^+)}
\|v\|_{X^{3,\rho(x)}(\Pi_{t_0}^+)},
\end{equation}
\begin{multline}\label{3.47}
\|v_yv_{xy}\|_{L_2(0,t_0;L_{2,+}^{\rho^2(x)/\rho'(x)})} \leq 
\Bigl(\int_0^{t_0}  \|v_y\rho^{1/2}\|^2_{L_{4,+}}\|v_{xy}\rho^{1/2}\|^2_{L_{4,+}}\,dt \Bigr)^{1/2} \\ \leq
ct_0^{1/2}\|v\|_{X^{2,\rho(x)}(\Pi_{t_0}^+)}
\|v\|_{X^{3,\rho(x)}(\Pi_{t_0}^+)}
\end{multline}
and similar estimate holds for $v_xv_{yy}$. Finally, similarly to \eqref{3.45}--\eqref{3.47}
\begin{multline}\label{3.48}
\|(\psi v)_t\|_{L_2(0,t_0;L_{2,+}^{\rho^2(x)/\rho'(x)})}+
\|(\psi v)_{xyy}\|_{L_2(0,t_0;L_{2,+}^{\rho^2(x)/\rho'(x)})} \\ \leq 
ct_0^{1/2}\|\psi\|_{X^{3,\rho(x)}(\Pi_{T}^+)}
\|v\|_{X^{3,\rho(x)}(\Pi_{t_0}^+)}.
\end{multline}
Moreover, the assumptions on the function $\psi$ ensure that the corresponding boundary conditions on the function $f_1$ are satisfied for $y=0$ and $y=L$. Therefore, the mapping $\Lambda$ exists and one can use estimate \eqref{2.63} to derive inequalities
\begin{equation}\label{3.49}
\|\Lambda v\|_{X^{3,\rho(x)}(\Pi_{t_0}^+)}\leq 
\widetilde c + c(T)t_0^{1/2}\bigl(\|\psi\|_{X^{3,\rho(x)}(\Pi_{T}^+)}\|v\|_{X^{3,\rho(x)}(\Pi_{t_0}^+)}  
+\|v\|_{X^{3,\rho(x)}(\Pi_{t_0}^+)}^2\bigr),
\end{equation}
\begin{multline}\label{3.50}
\|\Lambda v - \Lambda\widetilde v\|_{X^{3,\rho(x)}(\Pi_{t_0}^+)}\leq c(T)t_0^{1/2}
\Bigl(\|\psi\|_{X^{3,\rho(x)}(\Pi_{T}^+)}\|v-\widetilde v\|_{X^{3,\rho(x)}(\Pi_{t_0}^+)}  \\ 
+\bigl(\|v\|_{X^{3,\rho(x)}(\Pi_{t_0}^+)}+
\|\widetilde v\|_{X^{3,\rho(x)}(\Pi_{t_0}^+)}\bigr)
\|v-\widetilde v\|_{X^{3,\rho(x)}(\Pi_{t_0}^+)} \Bigr),
\end{multline}
where the constant $\widetilde c$ on the properties of functions $u_0$, $f$, $\psi$. Hence, existence of the unique solution to the considered problem in the space 
$X^{3,\rho(x)}(\Pi_{t_0}^+)$ on the time interval $[0,t_0]$, depending on 
$\|u_0\|_{\widetilde H_+^{3,\rho(x)}}$, follows by the standard argument.

Now establish the following a priori estimate: if $u\in X^{3,\rho(x)}(\Pi_{T'}^+)$ is a solution to the considered problem for some $T'\in (0,T]$, then
\begin{equation}\label{3.51}
\|u\|_{X^{3,\rho(x)}(\Pi_{T'}^+)}  \leq c,
\end{equation}
where the constant $c$ depends on $T$ and the properties of the functions $u_0$, $f$, $\psi$ from the hypothesis of the present lemma.

According to \eqref{3.26}
\begin{equation}\label{3.52}
\|u\|_{X^{1,\rho(x)}(\Pi_{T'}^+)} \leq c.
\end{equation}

Next, since the hypothesis of Lemma~\ref{L2.15} is fulfilled write down the corresponding analogue of equality \eqref{2.47} for the function $u_t$:
\begin{multline}\label{3.53}
\iint u_t^2\rho\,dxdy + \int_0^t \!\! \iint (3u_{tx}^2 +u_{ty}^2)\rho'\,dxdyd\tau \\ \leq
\iint (f-bu_x-u_{xxx}-u_{xyy}-uu_x-(\psi u)_x)^2\big|_{t=0}\rho\,dxdy +c\int_0^t \!\! \iint u_t^2\rho\,dxdyd\tau \\ + 2\int_0^t \!\! \iint (f-(\psi u)_x)_t u_t\rho\,dxdyd\tau  
+2\int_0^t \!\! \iint uu_t(u_t\rho)_x\,dxdyd\tau.
\end{multline}
Here since $\rho'\geq 1$ and estimate \eqref{3.52} holds
\begin{multline*}
2 \iint uu_t(u_t\rho)_x\,dxdy = \iint (u\rho'-u_x\rho)u_t^2\,dxdy  \\ \leq 
c\Bigl(\iint \bigl(u_x^2\frac{\rho}{\rho'}+u^2\bigr)\,dxdy \iint u_t^4\rho'\rho\,dxdy\Bigr)^{1/2} \\ \leq
c_1\Bigl[\Bigl(\iint |Du_t|^2\rho'\,dxdy \iint u_t^2\rho\,dxdy\Bigr)^{1/2} + \iint u_t^2\rho\,dxdy\Bigr],
\end{multline*}
\begin{multline*}
\Bigl|\iint \psi_{tx}uu_t\rho\,dxdy\Bigr| \\ \leq c(R)\Bigl(\iint \psi_{tx}^2\,dxdy\Bigr)^{1/2}
\Bigl(\iint u^4\rho^2\,dxdy \iint u_t^4\rho'\rho\,dxdy\Bigr)^{1/4} \\ \leq 
\varepsilon\iint |Du_t|^2\rho'\,dxdy + c(\varepsilon,R)\Bigl[\iint \psi_{tx}^2\,dxdy 
+1\Bigr] \Bigl( \iint u_t^2\rho\,dxdy+1\Bigr),
\end{multline*}
where $\varepsilon>0$ can be chosen arbitrarily small. Other terms in \eqref{3.53} are estimated in a obvious way and, consequently,
\begin{equation}\label{3.54}
\|u_t\|_{X^{\rho(x)}(\Pi_{T'}^+)} \leq c.
\end{equation}

Now apply Lemma~\ref{L2.16}, then inequality \eqref{2.55} and estimates \eqref{3.52}, \eqref{3.54} yield that for any $t\leq T'$
\begin{multline}\label{3.55}
\|u\|^2_{X^{2,\rho(x)}(\Pi_t^+)} \leq c+
c\|uu_x\|^2_{C([0,t];L_{2,+}^{\rho(x)})} \\ +
c \sup\limits_{\tau\in (0,t]}\Bigl|\int_0^\tau \!\! \iint \bigl(f-uu_x-(\psi u)_x\bigr)_{yy}u_{yy}\rho\,dxdyds\Bigr| 
+c\int_0^t \!\! \iint (u^2_{xx} +u^2_{yy})\rho\,dxdy
\end{multline}
We have
$$
\|uu_x\|^2_{L_{2,+}^{\rho(x)}} \leq c \|u_x\rho^{1/2}\|^2_{L_{4,+}} \leq
\varepsilon \bigl\| |Du_x| \bigr\|^2_{L_{2,+}^{\rho(x)}} +c(\varepsilon),
$$
where $\varepsilon>0$ can be chosen arbitrarily small;
\begin{equation*}
\iint (uu_x)_{yy}u_{yy}\rho\,dxdy = \frac12\iint (u_x\rho-u\rho')u^2_{yy}\,dxdy +
2\iint u_yu_{xy}u_{yy}\rho\,dxdy,
\end{equation*}
where again since $\rho'\geq 1$
\begin{multline*}
\Bigl|\iint u_yu_{xy}u_{yy}\rho\,dxdy\Bigr|  \leq  \Bigl(\iint u_y^2\rho\,dxdy \iint (u_{xy}^4+u_{yy}^4)\rho'\rho\,dxdy\Bigr)^{1/2} \\ \leq
c_1\Bigl(\iint |D^3 u|^2\rho'\,dxdy \iint |D^2 u|^2\rho\,dxdy\Bigr)^{1/2}.
\end{multline*}
Integral of $(u_x\rho-u\rho')u_{yy}^2$ is estimated in a similar way and it follows from \eqref{3.55} that
\begin{equation}\label{3.56}
\|u\|_{X^{2,\rho(x)}(\Pi_{T'}^+)} \leq c.
\end{equation}

Finally, apply Lemma~\ref{L2.17} on the basis of the already obtained estimates \eqref{3.54}, \eqref{3.56}, then inequality \eqref{2.63} and estimates \eqref{3.39}--\eqref{3.48} applied to $v\equiv u$  provide similarly to \eqref{3.49} that for any $t_0\in (0,T']$
\begin{equation*}
\|u\|_{X^{3,\rho(x)}(\Pi_{t_0}^+)}\leq 
\widetilde c + c(T)t_0^{1/2}\bigl(\|\psi\|_{X^{3,\rho(x)}(\Pi_{T}^+)}+\|u\|_{X^{2,\rho(x)}(\Pi_{T'}^+)}\bigr)
\|u\|_{X^{3,\rho(x)}(\Pi_{t_0}^+)},
\end{equation*}
whence \eqref{3.51} follows.
\end{proof}

\begin{proof}[Proof of Theorem~\ref{T1.3}]
Introduce the functions $\psi$, $U$ by formulas \eqref{3.3}, \eqref{3.5} and consider problem \eqref{3.6}, \eqref{3.7}, \eqref{1.4}. Then the functions $\psi$, $F\sim f$ and $U_0\sim u_0$ satisfy the hypothesis of Lemma~\ref{L3.3} and the result is immediate.
\end{proof}

\section{Uniqueness and continuous dependence}\label{S4}

\begin{theorem}\label{T4.1}
Let $\rho(x)$ be an admissible weight function, such that $\rho'(x)$ is also an admissible weight function and $\rho^{1/2}(x)\leq c_0\rho'(x) \ \forall x\geq 0$ for certain positive constant $c_0$. Then for any $T>0$ and $M>0$ there exist constant $c=c(T,M)$, such that for any two weak solutions $u(t,x,y)$ and $\widetilde u(t,x,y)$ to problem \eqref{1.1}--\eqref{1.4}, satisfying $\|u\|_{X_w^{\rho(x)}(\Pi_T^+)}, \|\widetilde u\|_{X_w^{\rho(x)}(\Pi_T^+)} \leq M$, with corresponding data $u_0, \widetilde u_0\in L_{2,+}^{\rho(x)}$, $\mu, \widetilde\mu\in \widetilde H^{1/3,1}(B_T)$, $f, \widetilde f\in L_1(0,T;L_{2,+}^{\rho(x)})$ the following inequality holds:
\begin{equation}\label{4.1}
\|u -\widetilde u\|_{X_w^{\rho(x)}(\Pi_T^+)} \leq c\bigl( \|u_0 - \widetilde u_0\|_{L_{2,+}^{\rho(x)}} +
\|\mu-\widetilde\mu\|_{H^{1/3,1}(B_T)} +
\|f-\widetilde f\|_{L_1(0,T;L_{2,+}^{\rho(x)})}\bigr).
\end{equation} 
\end{theorem}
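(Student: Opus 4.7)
The plan is to set $v\equiv u-\widetilde u$, which formally solves the linearized problem
\begin{equation*}
v_t+bv_x+v_{xxx}+v_{xyy}+\tfrac12\bigl((u+\widetilde u)v\bigr)_x=f-\widetilde f,
\qquad v\big|_{t=0}=u_0-\widetilde u_0,\quad v\big|_{x=0}=\mu-\widetilde\mu,
\end{equation*}
together with the boundary conditions \eqref{1.4}. To reduce to the homogeneous trace at $x=0$, I would follow \eqref{3.3} and set $\psi(t,x,y)\equiv J(t,x,y;\mu-\widetilde\mu)\eta(2-x)$, where $\mu-\widetilde\mu$ is extended to $B$ in the same class; by \eqref{3.4} and Corollary~\ref{C2.1}--Lemma~\ref{L2.9} the function $\psi$ together with $\widetilde\psi\equiv\psi_t+b\psi_x+\psi_{xxx}+\psi_{xyy}$ is controlled in the relevant mixed norms by $\|\mu-\widetilde\mu\|_{\widetilde H^{1/3,1}(B_T)}$. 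Then $V\equiv v-\psi$ has $V\big|_{x=0}=0$ and solves a linear equation whose right-hand side consists of $f-\widetilde f-\widetilde\psi$ and of the quadratic perturbation $-\tfrac12\bigl((u+\widetilde u)(V+\psi)\bigr)_x$.

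Next I would test the equation for $V$ by $2V\rho(x)$ and integrate over $\Sigma_+$ (justified by the standard smoothing argument used in Lemma~\ref{L2.12}, applicable because $V|_{x=0}=0$), obtaining an energy identity of the form
\begin{equation*}
\frac{d}{dt}\iint V^2\rho\,dxdy+\rho(0)\int_0^L V_x^2\big|_{x=0}\,dy
+\iint(3V_x^2+V_y^2-bV^2)\rho'\,dxdy=\iint V^2\rho'''\,dxdy+\mathcal R(t),
\end{equation*}
where $\mathcal R(t)$ collects the data error, the $\widetilde\psi$ contribution, and nonlinear terms arising from $((u+\widetilde u)(V+\psi))_x$. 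After integration by parts the nonlinear contribution reduces to combinations of
$\iint(u+\widetilde u)_x V^2\rho$, $\iint(u+\widetilde u)V^2\rho'$ together with bilinear pieces in $V$ and $\psi$, which are bounded using $\psi\in L_2(0,T;L_{\infty,+})$ from Corollary~\ref{C2.1} and the uniform bound $\|u\|,\|\widetilde u\|\le M$ in $X_w^{\rho(x)}(\Pi_T^+)$.

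The critical term is $\iint(u+\widetilde u)_x V^2\rho\,dxdy$. The plan is to split by Cauchy--Schwarz into
$\|(u+\widetilde u)_x(\rho')^{1/2}\|_{L_{2,+}}\cdot \|V\rho^{1/2}/(\rho')^{1/4}\|_{L_{4,+}}^2$
and then invoke the weighted interpolation Lemma~\ref{L1.1} with $q=4$, $\rho_1=\rho'$, $\rho_2=\rho$ (so that $\rho_1\le c\rho_2$ by admissibility) to obtain
$$
\bigl\|V(\rho')^{1/4}\rho^{1/4}\bigr\|_{L_{4,+}}^2\le c\,\bigl\||DV|(\rho')^{1/2}\bigr\|_{L_{2,+}}\bigl\|V\rho^{1/2}\bigr\|_{L_{2,+}}+c\bigl\|V\rho^{1/2}\bigr\|_{L_{2,+}}^2.
$$
It is precisely the uniqueness hypothesis $\rho^{1/2}(x)\le c_0\rho'(x)$ that converts the actual weight $\rho^{1/2}/(\rho')^{1/4}$ into the allowable weight $(\rho\rho')^{1/4}$, since it gives $\rho^{1/2}/(\rho')^{1/4}\le c_0^{1/2}(\rho')^{1/4}\cdot\rho^{1/4}\cdot(\rho/\rho)^{0}$ by writing $\rho^{1/2}/(\rho')^{1/4}=(\rho/\rho')^{1/4}\rho^{1/4}$. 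The analogous integral $\iint(u+\widetilde u)V^2\rho'$ is handled by the same scheme with no need of the extra hypothesis, and the bilinear $\psi$-terms by crude Hölder with $\psi\in L_2(0,T;L_{\infty,+})$.

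Using Young's inequality one can absorb $\varepsilon\||DV|(\rho')^{1/2}\|_{L_{2,+}}^2$ into the left-hand side of the energy identity, whereupon the remainder takes the form
\begin{equation*}
\frac{d}{dt}\|V\rho^{1/2}\|_{L_{2,+}}^2\le a(t)\|V\rho^{1/2}\|_{L_{2,+}}^2+b(t),
\end{equation*}
with $a(t)\in L_1(0,T)$ controlled by $M$ through $\|(u+\widetilde u)_x(\rho')^{1/2}\|_{L_{2,+}}^2\in L_1(0,T)$, and $b(t)$ controlled by the data norms on the right-hand side of \eqref{4.1} via the bounds on $\psi,\widetilde\psi$ and on $f-\widetilde f$. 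Gronwall's inequality then delivers the $C([0,T];L_{2,+}^{\rho(x)})$ part of \eqref{4.1}; the $L_2(0,T;\widetilde H_+^{1,\rho'(x)})$ part is read off from the absorbed dissipative terms in the same energy identity. The main obstacle I anticipate is the bookkeeping to match the weights $\rho$ and $\rho'$ in the nonlinear trilinear term, which is exactly why the structural condition $\rho^{1/2}\le c_0\rho'$ has been imposed; once Lemma~\ref{L1.1} is applied with the correct pair $(\rho_1,\rho_2)$ everything else is routine interpolation mirroring estimates \eqref{3.31}--\eqref{3.34} from the existence proof.
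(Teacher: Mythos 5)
Your plan is correct and follows essentially the same route as the paper: subtract the boundary potential $J(\cdot;\mu-\widetilde\mu)\eta(2-x)$ to reduce to zero trace at $x=0$, write the weighted $L_2$ energy identity of Lemma~\ref{L2.12} for the difference, estimate the trilinear term via Lemma~\ref{L1.1} with the hypothesis $\rho^{1/2}\leq c_0\rho'$ converting the weights into the admissible pair $(\rho',\rho)$, and close with Young and Gronwall. The only (immaterial) deviation is that you integrate by parts once more to put the derivative on $u+\widetilde u$ and apply Lemma~\ref{L1.1} to $V$, whereas the paper keeps the term as $\iint(u+\widetilde u)(U+\Psi)(U\rho)_x$ and runs a three-factor H\"older estimate \eqref{4.9}; both hinge on the same structural condition, which the paper also uses to check that $uu_x-\widetilde u\widetilde u_x$ is an admissible $f_2$-source in Lemma~\ref{L2.12}.
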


\begin{proof}
Let the function $\psi$ is defined by formula \eqref{3.3}, the function $\widetilde\psi$ in a similar way for $\widetilde\mu$ and $\Psi\equiv \psi-\widetilde\psi$. Then, in particular,
\begin{equation}\label{4.2}
\|\Psi\|_{X^{\rho(x)}(\Pi_T^+)} \leq c(T)\|\mu-\widetilde\mu\|_{H^{1/3,1}(B_T)}.
\end{equation}
Let $U_0\equiv u_0-\widetilde u_0 - \Psi\big|_{t=0}$, $F\equiv f-\widetilde f -(\Psi_t+b\Psi_x+\Psi_{xxx}+\Psi_{xyy})$, then
\begin{gather}\label{4.3}
\|U_0\|_{L_{2,+}^{\rho(x)}} \leq \|u_0-\widetilde u_0\|_{L_{2,+}^{\rho(x)}} +
c(T)\|\mu-\widetilde\mu\|_{H^{1/3,1}(B_T)}, \\
\label{4.4}
\|F\|_{L_1(0,T;L_{2,+}^{\rho(x)})} \leq \|f-\widetilde f\|_{L_1(0,T;L_{2,+}^{\rho(x)})} +
c(T)\|\mu-\widetilde\mu\|_{H^{1/3,1}(B_T)}.
\end{gather}
The function $U(t,x,y) \equiv u(t,x,y) -\widetilde u(t,x,y) -\Psi(t,x,y)$ is a weak solution to an initial-boundary value problem in $\Pi_T^+$ for an equation
\begin{equation}\label{4.5}
U_t+bU_x+U_{xxx}+U_{xyy} = F -(uu_x-\widetilde u \widetilde u_x)
\end{equation}
with initial and boundary conditions \eqref{1.4},
\begin{equation}\label{4.6}
U\big|_{t=0} =U_0,\qquad U\big|_{x=0}=0.
\end{equation}
Apply Lemma~\ref{L2.12} where $f_2= -(uu_x-\widetilde u \widetilde u_x)$. Note that assumptions on the function $\rho$ provide that $\rho(\rho')^{-1/3} \leq c \rho^{1/3}\rho'$ and by virtue of \eqref{1.15}
\begin{multline*}
\|uu_x\rho^{3/4}(\rho')^{-1/4}\|_{L_{4/3}(\Pi_T^+)}^{4/3} \leq c 
\int_0^T \Bigl(\iint u^4\rho'\rho\,dxdy\Bigr)^{1/3} \Bigl(\iint u_x^2\rho'\,dxdy\Bigr)^{2/3}\,dt \\ \leq c_1
\|u\|_{L_\infty(0,T;L_{2,+}^{\rho(x)})}^{1/3} \|u\|_{L_2(0,T;H_+^{1,\rho'(x)})} <+\infty.
\end{multline*}
Therefore, we derive from \eqref{2.47} that for $t\in (0,T]$
\begin{multline}\label{4.7}
\iint U^2\rho\,dxdy +\int_0^t\!\! \iint (3U_x^2+U_y^2)\rho'\,dxdyd\tau \leq \iint U_0^2\rho\,dxdy  \\ 
+c\int_0^t\!\! \iint U^2\rho\,dxdyd\tau + 2\int_0^t \!\!\iint\bigl(F-(uu_x-\widetilde u \widetilde u_x)\bigr)U\rho\, dxdyd\tau.
\end{multline}
Here
\begin{equation}\label{4.8}
-2\iint (uu_x-\widetilde u \widetilde u_x)U\rho\,dxdy =
\iint (u+\widetilde u)(U+\Psi)(U\rho)_x\,dxdy.
\end{equation}
Then by virtue of \eqref{1.15} and the assumptions on the function $\rho$ (which yield that $(\rho/\rho')^3 \leq c \rho'\rho$)
\begin{multline}\label{4.9}
\iint |u(U+\Psi)U_x|\rho\,dxdy  \\ \leq c \Bigl(\iint u^4 (\rho/\rho')^3\,dxdy \iint (U^4+\Psi^4) \rho'\rho\,dxdy\Bigr)^{1/4} \Bigl(\iint U_x^2\rho'\,dxdy\Bigr)^{1/2} \\ \leq c_1 \Bigl(\|u\|^{1/2}_{H_+^{1,\rho'(x)}}\|u\|^{1/2}_{L_{2,+}^{\rho(x)}} +
\|u\|_{L_{2,+}^{\rho(x)}}\Bigr)
\Bigl[\Bigl(\iint \bigl(|DU|^2+|D\Psi|^2\bigr)\rho'\,dxdy\Bigr)^{3/4} \\ \times \Bigl(\iint (U^2+\Psi^2)\rho\,dxdy\Bigr)^{1/4}  +
\iint (U^2+\Psi^2)\rho\,dxdy\Bigr]
\end{multline}
and, therefore,
\begin{multline}\label{4.10}
\int_0^t\!\! \iint |u(U+\Psi)U_x|\rho\,dxdyd\tau \leq
\varepsilon \int_0^t\!\! \iint \bigl(|DU|^2+|D\Psi|^2\bigr)\rho'\,dxdyd\tau \\ + c(\varepsilon)
\int_0^t \gamma(\tau) \iint (U^2+\Psi^2)\rho\,dxdyd\tau,
\end{multline}
where $\varepsilon>0$ can be chosen arbitrarily small and $\gamma\equiv 1+\|u\|^2_{H_+^{1,\rho'(x)}}\in L_1(0,T)$. Then estimates \eqref{4.2}--\eqref{4.4}, \eqref{4.10} and inequality \eqref{4.7} provide the desired result.
\end{proof}

\begin{remark}\label{R4.1}
Theorems~\ref{T1.1} and \ref{T4.1} show that under the hypothesis of Theorem~\ref{T1.1} problem \eqref{1.1}--\eqref{1.4} is globally well-posed in the space $X_w^{\rho(x)}(\Pi_T^+)$.
\end{remark}

\begin{theorem}\label{T4.2}
Let $\rho(x)$ be an admissible weight function, such that $\rho'(x)$ is also an admissible weight function and $\rho'(x)\geq c_0 \ \forall x\geq 0$ for certain positive constant $c_0$. Then for any $T>0$ and $M>0$ there exist constant $c=c(T,M)$, such that for any two weak solutions $u(t,x,y)$ and $\widetilde u(t,x,y)$ to problem \eqref{1.1}--\eqref{1.4}, satisfying $\|u\|_{X_w^{1,\rho(x)}(\Pi_T^+)}, \|\widetilde u\|_{X_w^{1,\rho(x)}(\Pi_T^+)} \leq M$, with corresponding data $u_0, \widetilde u_0\in L_{2,+}^{\rho(x)}$, $\mu, \widetilde\mu\in \widetilde H^{1/3,1}(B_T)$, $f, \widetilde f\in L_1(0,T;L_{2,+}^{\rho(x)})$ inequality \eqref{4.1} holds.
\end{theorem}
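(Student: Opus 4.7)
The plan is to follow the proof of Theorem~\ref{T4.1} with the same setup: introduce the boundary potentials $\psi,\widetilde\psi$ via \eqref{3.3} (so that $\Psi\equiv\psi-\widetilde\psi$ vanishes for $x\geq 2$ and $\|\Psi\|_{X^{\rho(x)}(\Pi_T^+)}\leq c(T)\|\mu-\widetilde\mu\|_{\widetilde H^{1/3,1}(B_T)}$), set $U\equiv u-\widetilde u-\Psi$, and note that $U$ solves the linear problem \eqref{4.5}--\eqref{4.6} with data $U_0,F$ obeying \eqref{4.3}--\eqref{4.4}. The crucial new fact furnished by the strengthened hypotheses is that $u,\widetilde u\in X_w^{1,\rho(x)}(\Pi_T^+)$ combined with $\rho'(x)\geq c_0$ implies $u,\widetilde u\in L_2(0,T;H_+^2)$ and hence, via \eqref{1.18}, $v\equiv u+\widetilde u\in L_2(0,T;L_{\infty,+})$; this $L^\infty$-in-space control replaces the pointwise weight identity $(\rho/\rho')^3\leq c\rho'\rho$ exploited in Theorem~\ref{T4.1}.

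Apply the analogue of the weighted $L_2$ energy identity \eqref{4.7} to $U$; the contributions of $b\iint U^2\rho'$ and $\iint U^2\rho'''$ are absorbed into $c\int_0^t\iint U^2\rho\,dxdyd\tau$ since $|\rho'|,|\rho'''|\leq c\rho$ by admissibility. Only the quadratic difference term requires fresh treatment. Writing $uu_x-\widetilde u\widetilde u_x=\tfrac12((u+\widetilde u)(U+\Psi))_x$ and integrating by parts twice (with vanishing boundary contribution at $x=0$ since $U\big|_{x=0}=0$) gives
\begin{equation*}
-2\iint(uu_x-\widetilde u\widetilde u_x)U\rho\,dxdy=-\tfrac12\iint v_xU^2\rho\,dxdy+\tfrac12\iint vU^2\rho'\,dxdy+\iint v\Psi U_x\rho\,dxdy+\iint v\Psi U\rho'\,dxdy.
\end{equation*}

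The four pieces will be bounded as follows. For the first, I apply \eqref{1.15} with $\rho_1\equiv 1$, $\rho_2\equiv\rho$, $q=4$, $s=1/4$ (allowed because $\rho(x)\geq\rho(0)>0$) to get $\|U\rho^{1/4}\|_{L_{4,+}}^2\leq c\||DU|\|_{L_{2,+}}\|U\rho^{1/2}\|_{L_{2,+}}+c\|U\rho^{1/2}\|_{L_{2,+}}^2$, combine with $\||DU|\|_{L_{2,+}}\leq c_0^{-1/2}\||DU|(\rho')^{1/2}\|_{L_{2,+}}$ and $\|v_x\|_{L_\infty(0,T;L_{2,+}^{\rho(x)})}\leq 2M$, and use Young's inequality to reach $|\iint v_xU^2\rho|\leq\varepsilon\iint|DU|^2\rho'+c(\varepsilon,M)\iint U^2\rho$. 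For the second, $|\iint vU^2\rho'|\leq c\|v\|_{L_{\infty,+}}\iint U^2\rho$ (since $\rho'\leq c\rho$). For the third, I exploit the compact support of $\Psi$ in $\{x<2\}$: all weighted norms of $\Psi$ are equivalent to unweighted ones there, so Cauchy--Schwarz plus $\|U_x\|_{L_{2,+}}\leq c_0^{-1/2}\|U_x(\rho')^{1/2}\|_{L_{2,+}}$ produces $|\iint v\Psi U_x\rho|\leq c\|v\|_{L_{\infty,+}}\|\Psi\|_{L_{2,+}^{\rho(x)}}\|U_x(\rho')^{1/2}\|_{L_{2,+}}$, which AM--GM turns into $\varepsilon\iint U_x^2\rho'+c(\varepsilon)\|v\|_{L_{\infty,+}}^2\|\Psi\|_{L_{2,+}^{\rho(x)}}^2$; the fourth piece is handled analogously via $\rho'\leq c\rho$.

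Choosing $\varepsilon$ small enough to absorb the $\iint|DU|^2\rho'$ contributions into the good term on the left-hand side produces an integral inequality of the form
\begin{equation*}
\iint U^2(t,\cdot,\cdot)\rho\,dxdy\leq c\bigl(\|U_0\|_{L_{2,+}^{\rho(x)}}^2+\|F\|_{L_1(0,T;L_{2,+}^{\rho(x)})}^2+\|\Psi\|_{X^{\rho(x)}(\Pi_T^+)}^2\bigr)+\int_0^t\gamma(\tau)\iint U^2\rho\,dxdyd\tau,
\end{equation*}
where $\gamma(\tau)=c(1+\|v(\tau)\|_{L_{\infty,+}}^2+\|v_x(\tau)\|_{L_{2,+}^{\rho(x)}}^2)$ belongs to $L_1(0,T)$ thanks to the regularity of $u,\widetilde u$, and the source on the right-hand side is controlled by the data through \eqref{4.2}--\eqref{4.4}. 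Gronwall's lemma then delivers \eqref{4.1}. The principal obstacle is the careful bookkeeping of the four nonlinear pieces, in particular ensuring that every coefficient multiplying $\iint U^2\rho$ lies in $L_1(0,T)$; this is precisely where the stronger regularity hypothesis $u,\widetilde u\in X_w^{1,\rho(x)}(\Pi_T^+)$ of Theorem~\ref{T4.2} substitutes for the weight condition $\rho^{1/2}\leq c_0\rho'$ of Theorem~\ref{T4.1}.
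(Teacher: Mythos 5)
Your proof is correct and follows essentially the same route as the paper: the same reduction to the linear problem for $U=u-\widetilde u-\Psi$, the same energy inequality \eqref{4.7}, the same decomposition of $uu_x-\widetilde u\widetilde u_x=\frac12\bigl((u+\widetilde u)(U+\Psi)\bigr)_x$, and the same exploitation of $\rho'\geq c_0$ to control the unweighted $|DU|$ by the good term $\iint|DU|^2\rho'$. The only (harmless) variations are that you integrate by parts once more to move the derivative off $\Psi$, thereby bypassing the bound \eqref{4.11} on $\Psi_x$, and that you invoke the embedding \eqref{1.18} for $\|u+\widetilde u\|_{L_{\infty,+}}$ where the paper uses the weighted $L_4$ interpolation \eqref{1.15}.
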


\begin{proof}
The proof mostly repeats the proof of Theorem\ref{T4.1}. The difference is related only to the nonlinear term. Here we apply Lemma~\ref{L2.12} where $f_1= -(u^2-\widetilde u^2)/2$. Note that for any $t\in [0,T]$
$$
\|u^2\|_{L_{2,+}^{\rho^2(x)/\rho'(x)}} \leq \|u\|^2_{H_+^{1,\rho(x)}},
$$
in particular, $u^2 \in L_\infty(0,T;L_{2,+}^{\rho^2(x)/\rho'(x)})$. Write down inequality \eqref{4.7}. In comparison with \eqref{4.8} transform the integral of the nonlinear term in the following way:
\begin{multline*}
2\iint (uu_x-\widetilde u \widetilde u_x)U\rho\,dxdy = \frac12 \iint (u+\widetilde u)_xU^2\rho\,dxdy \\-
 \frac12 \iint (u+\widetilde u)U^2\rho'\,dxdy
+\iint \bigl((u+\widetilde u)\Psi\bigr)_x U\rho\,dxdy.
\end{multline*}
Here
\begin{multline*}
\iint |u_x|(U^2+\Psi^2)\rho\,dxdy \leq 
\Bigl(\iint u_x^2\frac{\rho}{\rho'}\,dxdy \iint (U^4+\Psi^4) \rho'\rho\,dxdy\Bigr)^{1/2} \\ \leq c\Bigl[\Bigl(\iint \bigl(|DU|^2+|D\Psi|^2\bigr)\rho'\,dxdy\iint (U^2+\Psi^2)\rho\,dxdy\Bigr)^{1/2} \!+
\iint (U^2+\Psi^2)\rho\,dxdy\Bigr],
\end{multline*}
\begin{multline*}
\iint |u\Psi_xU|\rho\,dxdy \leq c\Bigl(\iint u^4\rho^2\,dxdy \iint U^4\rho'\rho\,dxdy\Bigr)^{1/4}
\Bigl(\iint \Psi_x^2\rho\,dxdy\Bigr)^{1/2}  \\ \leq
\varepsilon \iint |DU|^2\rho'\,dxdy + c(\varepsilon)\iint U^2\rho'dxdy +\iint \Psi_x^2\rho\,dxdy. 
\end{multline*}
Note that similarly to \eqref{4.2}
\begin{equation}\label{4.11}
\|\Psi_x\|_{L_2(0,T;L_{2,+}^{\rho(x)})} \leq c(T)\|\mu-\widetilde\mu\|_{H^{1/3,1}(B_T)}
\end{equation}
since $\Psi=0$ for $x\geq 2$. Then the desired result succeeds from inequality \eqref{4.7}.
\end{proof}

\begin{theorem}\label{T4.3}
Let $\rho(x)$ be an admissible weight function, such that $\rho'(x)$ is also an admissible weight function and $\rho^{1/3}(x)\leq c_0\rho'(x) \ \forall x\geq 0$ for certain positive constant $c_0$. Then for any $T>0$ and $M>0$ there exist constant $c=c(T,M)$, such that for any two weak solutions $u(t,x,y)$ and $\widetilde u(t,x,y)$ to problem \eqref{1.1}--\eqref{1.4}, satisfying $\|u\|_{X_w^{1,\rho(x)}(\Pi_T^+)}, \|\widetilde u\|_{X_w^{1,\rho(x)}(\Pi_T^+)} \leq M$, with corresponding data $u_0, \widetilde u_0\in \widetilde H_+^{1,\rho(x)}$, $\mu, \widetilde\mu\in \widetilde H^{2/3,2}(B_T)$, $f, \widetilde f\in L_2(0,T;\widetilde H_+^{1,\rho(x)})$, $u_0(0,y)\equiv \mu(0,y)$, $\widetilde u_0(0,y)\equiv \widetilde \mu(0,y)$, the following inequality holds:
\begin{equation}\label{4.12}
\|u -\widetilde u\|_{X_w^{1,\rho(x)}(\Pi_T^+)} \leq c\bigl( \|u_0 - \widetilde u_0\|_{H_+^{1,\rho(x)}} +
\|\mu-\widetilde\mu\|_{H^{2/3,2}(B_T)} +
\|f-\widetilde f\|_{L_2(0,T;H_+^{1,\rho(x)})}\bigr).
\end{equation} 
\end{theorem}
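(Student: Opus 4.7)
The plan is to parallel the proofs of Theorems~\ref{T4.1} and~\ref{T4.2}: reduce to homogeneous Dirichlet data at $x=0$ by subtracting a localized boundary potential of $\mu-\tilde\mu$, derive a weighted energy identity for the resulting difference, and close by Gronwall's inequality. The structural novelty is that, since we want continuous dependence in the $H^1$-weighted norm, the basic energy identity must be that of Lemma~\ref{L2.13} (inequality~\eqref{2.52}) rather than Lemma~\ref{L2.12}.

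Set $\Psi(t,x,y)\equiv J(t,x,y;\mu-\tilde\mu)\eta(2-x)$; Lemmas~\ref{L2.6}--\ref{L2.8} give $\|\Psi\|_{X^{1,\rho(x)}(\Pi_T^+)}\leq c(T)\|\mu-\tilde\mu\|_{\widetilde H^{2/3,2}(B_T)}$. Let $U\equiv u-\tilde u-\Psi$; the compatibility hypotheses $u_0(0,\cdot)\equiv\mu(0,\cdot)$, $\tilde u_0(0,\cdot)\equiv\tilde\mu(0,\cdot)$ ensure $U_0\equiv u_0-\tilde u_0-\Psi\big|_{t=0}$ lies in $\widetilde H^{1,\rho(x)}_+$ with $U_0\big|_{x=0}=0$, and $U$ is a weak solution of
\begin{equation*}
U_t+bU_x+U_{xxx}+U_{xyy}=F-\tfrac{1}{2}\partial_x\bigl[(u+\tilde u)(U+\Psi)\bigr],\quad U\big|_{t=0}=U_0,\quad U\big|_{x=0}=0,
\end{equation*}
with boundary conditions of type~\eqref{1.4} and $F\equiv f-\tilde f-(\Psi_t+b\Psi_x+\Psi_{xxx}+\Psi_{xyy})$; routine bounds give $\|U_0\|_{\widetilde H^{1,\rho(x)}_+}+\|F\|_{L_2(0,T;\widetilde H^{1,\rho(x)}_+)}$ majorized by the right-hand side of~\eqref{4.12}.

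I then run two energy arguments in tandem. A preliminary $L_2$-weighted estimate via Lemma~\ref{L2.12} with $f_0=F$, $f_1=-\tfrac{1}{2}(u+\tilde u)(U+\Psi)$, $f_2\equiv 0$ controls $\|U\|_{X^{\rho(x)}(\Pi_t^+)}$ and the trace $\|U_x|_{x=0}\|_{L_2(B_t)}$: the bilinear term is handled as in~\eqref{4.9}--\eqref{4.10} of Theorem~\ref{T4.1}, where the hypothesis $\rho^{1/2}\leq c_0\rho'$ of that theorem is replaced by the weaker $\rho^{1/3}\leq c_0\rho'$ by exploiting the higher regularity $u,\tilde u\in X_w^{1,\rho(x)}$, namely the inequality $(\rho/\rho')^3\leq c\rho^2$ together with the bound $\|u\rho^{1/2}\|_{L_{4,+}}\leq c\|u\|_{\widetilde H^{1,\rho(x)}_+}$ from Lemma~\ref{L1.1}. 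Next the $H^1$-weighted identity~\eqref{2.52} is used with the same decomposition; the trace term $c(\rho,b)\int_{B_t}U_x^2|_{x=0}\,dyd\tau$ on its right is absorbed by the preliminary estimate, and the $f_1$-contribution $-2\int_0^t\iint f_1[(U_x\rho)_x+U_{yy}\rho]\,dxdyd\tau$ is split via Cauchy's inequality as $\varepsilon\int_0^t\iint(U_{xx}^2+U_{yy}^2)\rho'\,dxdyd\tau+C_\varepsilon\int_0^t\|f_1\|_{L_{2,+}^{\rho^2/\rho'}}^2\,d\tau$, with the first piece absorbed on the left of~\eqref{2.52} and the second estimated using Hölder's inequality and Lemma~\ref{L1.1}. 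The result is an inequality
\begin{equation*}
\|U\|_{X^{1,\rho(x)}(\Pi_t^+)}^2\leq c\,(\text{data terms from~\eqref{4.12}})+\int_0^t\Gamma(\tau)\|U\|_{\widetilde H^{1,\rho(x)}_+}^2\,d\tau,
\end{equation*}
with $\Gamma\in L_1(0,T)$ depending only on $M$ and on the norms of $\Psi$, and Gronwall's lemma yields~\eqref{4.12}.

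The main obstacle is the estimate of $\|f_1\|_{L_{2,+}^{\rho^2/\rho'}}=\tfrac{1}{2}\|(u+\tilde u)(U+\Psi)\|_{L_{2,+}^{\rho^2/\rho'}}$ in terms of $\|U+\Psi\|_{\widetilde H^{1,\rho(x)}_+}$ and the $X^{1,\rho(x)}$-norms of $u,\tilde u$. The natural route is a weighted $L_4\cdot L_4$ Hölder split followed by Lemma~\ref{L1.1} applied with $q=4$, $\rho_1=\rho'$, $\rho_2=\rho$ (admissible since $\rho'\leq c\rho$ by admissibility of $\rho$) to each factor; because the present hypothesis is only $\rho^{1/3}\leq c_0\rho'$ rather than $\rho^{1/2}\leq c_0\rho'$, one must carry along an extra power of $\rho^{1/6}$ throughout the computation and exploit the full $\widetilde H^{1,\rho(x)}$-regularity of $u,\tilde u$ (in particular $u\rho^{1/2}\in L_\infty(0,T;L_{p,+})$ for every finite $p$) in order to absorb it. A secondary nuisance, handled by~\eqref{1.17} and by the fact that $\rho\sim\rho'$ on the bounded strip $\{0\leq x\leq 2\}$ supporting $\Psi$, is the control of the extra trace- and $\Psi$-type terms produced by~\eqref{2.52}.
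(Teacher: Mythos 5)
Your overall architecture matches the paper's: the same boundary-potential reduction $U=u-\widetilde u-\Psi$, a preliminary $L_2$-level estimate (which the paper simply imports from Theorem~\ref{T4.2}, whose hypothesis $\rho'\geq c_0$ is automatic here because $\rho^{1/3}\leq c_0\rho'$ forces $\rho'>0$, hence $\rho\geq\rho(0)$ and $\rho'\geq\rho(0)^{1/3}/c_0$), and then the $H^1$-level identity of Lemma~\ref{L2.13} closed by Gronwall. The reduction, the bounds on $\Psi$, $U_0$, $F$, the $L_2$ step and the absorption of the trace term are all in order.

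The gap is in the $H^1$ step. In Lemma~\ref{L2.13} the decomposition is $f=f_0+f_1$ with $f_1$ entering the equation \emph{undifferentiated}; the term $-2\int_0^t\iint f_1[(U_x\rho)_x+U_{yy}\rho]\,dxdyd\tau$ in \eqref{2.52} comes from pairing the forcing itself with the multiplier, not from an integration by parts in $x$. The nonlinear forcing in your equation for $U$ is $-\tfrac12\partial_x[(u+\widetilde u)(U+\Psi)]=-(uu_x-\widetilde u\widetilde u_x)$, so the only admissible choice is $f_1=-(uu_x-\widetilde u\widetilde u_x)$, and the quantity to be estimated is $\|uu_x-\widetilde u\widetilde u_x\|_{L_{2,+}^{\rho^2(x)/\rho'(x)}}$, not $\tfrac12\|(u+\widetilde u)(U+\Psi)\|_{L_{2,+}^{\rho^2(x)/\rho'(x)}}$; you cannot shift the $\partial_x$ onto the multiplier $(U_x\rho)_x+U_{yy}\rho$ without producing third derivatives of $U$, which $X^{1,\rho(x)}$ does not control. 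Writing $uu_x-\widetilde u\widetilde u_x=u(U+\Psi)_x+\widetilde u_x(U+\Psi)$, the correct bound involves $\iint(u^2+\widetilde u^2)(U_x^2+\Psi_x^2)\rho^2/\rho'$ and $\iint(u_x^2+\widetilde u_x^2)(U^2+\Psi^2)\rho^2/\rho'$; after the interpolation of Lemma~\ref{L1.1} (this is exactly where $\rho^2/\rho'\leq c\rho^{3/2}(\rho')^{1/2}$, i.e.\ $\rho^{1/3}\leq c_0\rho'$, is used) the first produces a factor $\bigl(\iint|DU_x|^2\rho'\,dxdy\bigr)^{1/2}\bigl(\iint U_x^2\rho\,dxdy\bigr)^{1/2}$ that must be absorbed into the second-order dissipation on the left of \eqref{2.52} and then Gronwalled, while the second uses the already-established $L_2$-level bound on $\sup_t\iint(U^2+\Psi^2)\rho\,dxdy$ together with $\iint|Du_x|^2\rho'\,dxdy\in L_1(0,T)$ as the Gronwall weight. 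This is the content of the paper's estimates following \eqref{4.13}, and it is precisely the part your proposal replaces with a Cauchy split involving only the undifferentiated product, which does not correspond to the actual energy identity.
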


\begin{proof}
First of all note that the hypothesis of Theorem~\ref{T4.2} holds and, consequently, inequality \eqref{4.1} is satisfied.

Introduce the same functions $\Psi$, $U_0$, $F$, $U$ as in the proof of Theorem~\ref{T4.1}. Note that
\begin{gather*}
\|\Psi\|_{X^{1,\rho(x)}(\Pi_T^+)} \leq c(T)\|\mu-\widetilde\mu\|_{H^{2/3,2}(B_T)}, \\
\|U_0\|_{H_+^{1,\rho(x)}} \leq \|u_0-\widetilde u_0\|_{H_+^{1,\rho(x)}} +
c(T)\|\mu-\widetilde\mu\|_{H^{2/3,2}(B_T)}, \\
\|F\|_{L_2(0,T;H_+^{1,\rho(x)})} \leq \|f-\widetilde f\|_{L_2(0,T;H_+^{1,\rho(x)})} +
c(T)\|\mu-\widetilde\mu\|_{H^{2/3,2}(B_T)}.
\end{gather*}
Apply Lemma~\ref{L2.13}. Note that since $\rho^2(\rho')^{-1} \leq c \rho^{3/2}(\rho')^{1/2}$
\begin{multline*}
\iint u^2u_x^2 \rho^2(\rho')^{-1}\,dxdy \leq c \Bigl(\iint u^4\rho^2\,dxdy \iint u_x^4\rho'\rho\,dxdx\Bigr)^{1/2} \\ \leq c_1\iint (|Du_x|^2\rho' +u_x^2\rho)\,dxdy.
\end{multline*}
In particular, $uu_x\in L_2(0,T;L_{2,+}^{\rho^2(x)/\rho'(x)})$. Then inequality \eqref{2.52} for $\gamma\equiv 1$ (together with \eqref{1.17}) yields that for $t\in (0,T]$
\begin{multline}\label{4.13}
\iint (U_x^2+U_y^2)\rho\,dxdy + \int_0^t\!\! \iint |D^2 U|^2\rho'\,dxdyd\tau \leq 
\iint (U_{0x}^2+U_{0y}^2)\rho\,dxdy \\ +c\int_0^t \!\!\iint (U_x^2+U_y^2)\rho\,dxdyd\tau +
\int_0^t \!\!\iint (F_x^2+F_y^2+F^2)\rho\,dxdyd\tau \\ +
2\int_0^t\!\! \iint (uu_x-\widetilde u \widetilde u_x)[(U_x\rho)_x+U_{yy}\rho]\,dxdyd\tau.
\end{multline}
The last integral in the right side of \eqref{4.13} is not greater than
\begin{multline*}
\varepsilon \int_0^t\!\!\iint (U_{xx}^2+U_{yy}^2+U_x^2)\rho'\,dxdyd\tau \\ +
c(\varepsilon) \int_0^t\!\!\iint \bigl[(u_x^2+\widetilde u_x^2)(U^2+\Psi^2) +
(u^2+\widetilde u^2)(U_x^2+\Psi_x^2)\bigr]\frac{\rho^2}{\rho'}\,dxdyd\tau,
\end{multline*}
where $\varepsilon>0$ can be chosen arbitrarily small. 
Here again since $ \rho^2(\rho')^{-1} \leq c \rho^{3/2}(\rho')^{1/2}$
\begin{multline*}
\iint u_x^2 (U^2+\Psi^2)\frac{\rho^2}{\rho'}\,dxdy  \leq
c\Bigl(\iint u_x^4 \rho'\rho\,dxdy \iint (U^4+\Psi^4)\rho^2\,dxdy\Bigr)^{1/2} \\ \leq
c_1\Bigl[\Bigr(\iint |Du_x|^2\rho'\,dxdy \iint u_x^2\rho\,dxdy\Bigr)^{1/2} +
\iint u_x^2\rho\,dxdy\Bigr] \\ \times
\Bigl[\Bigr(\iint \bigl(|DU|^2+|D\Psi|^2\bigr)\rho\,dxdy \iint (U^2+\Psi^2)\rho\,dxdy\Bigr)^{1/2} +
\iint (U^2+\Psi^2)\rho\,dxdy\Bigr] \\ \leq c_2
\iint \bigl(|DU|^2+|D\Psi|^2\bigr)\rho\,dxdy + c_2
\Bigl[\iint |Du_x|^2\rho'\,dxdy+1\Bigr]\iint (U^2+\Psi^2)\rho\,dxdy,
\end{multline*}
where the first multiplier in the last term belongs to the space $L_1(0,T)$ and the second one is estimated uniformly with respect to $t$ according to \eqref{4.1} and \eqref{4.2}. Finally,
\begin{multline*}
\iint u^2(U_x^2+\Psi_x^2)\frac{\rho^2}{\rho'}\,dxdy   \leq
c\Bigl(\iint u^4\rho^2\,dxdy \iint (U_x^4+\Psi_x^4)\rho'\rho\,dxdy\Bigr)^{1/2}  \\ \leq 
c_1\Bigr(\iint \bigl(|DU_x|^2+|D\Psi_x|^2\bigr)\rho'\,dxdy \iint (U_x^2+\Psi_x^2)\rho\,dxdy\Bigr)^{1/2} +
\iint (U_x^2+\Psi_x^2)\rho\,dxdy.
\end{multline*}
As a result, the statement of the theorem follows from inequality \eqref{4.13}.
\end{proof}

\begin{remark}\label{R4.2}
Theorems~\ref{T1.2}, \ref{T4.2} and \ref{T4.3} show that under the hypothesis of Theorem~\ref{T1.2} and additional assumption  $\rho^{1/3}(x)\leq c_0\rho'(x) \ \forall x\geq 0$  problem \eqref{1.1}--\eqref{1.4} is globally well-posed in the space $X_w^{1,\rho(x)}(\Pi_T^+)$. This additional assumption holds for any exponential weight $e^{2\alpha x}$, $\alpha>0$, and for the power weight $(1+x)^{2\alpha}$ if $\alpha\geq 3/4$.
\end{remark}

For regular solutions we prefer to present well-posedness in another form.

\begin{theorem}\label{T4.4}
Let $T>0$ and $\rho(x)$ be an admissible weight function, such that $\rho'(x)$ is also an admissible weight function and $\rho'(x)\geq 1 \ \forall x\geq 0$. Denote by $\mathcal F$ the space of functions $f(t,x,y)$, defined on $\Pi_T^+$ and satisfying the hypothesis of Theorem~\ref{T1.3}, endowed with the natural norm. Then the mapping $(u_0,\mu,f)\mapsto u$, where $u$ is the corresponding solution of problem \eqref{1.1}--\eqref{1.4} and $u_0(0,y)\equiv \mu(0,y)$, is Lipschitz continuous on any ball in the norm of the mapping $\widetilde H_+^{3,\rho(x)} \times \widetilde H^{4/3,4}(B_T) \times \mathcal F \to X^{3,\rho(x)}(\Pi_T^+)$.
\end{theorem}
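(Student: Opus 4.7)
The plan is to reduce the statement to the linear regularity theory of Section~\ref{S2} applied to the difference of two regular solutions, in the same spirit as the existence argument of Lemma~\ref{L3.3}. Fix two data sets $(u_0,\mu,f)$ and $(\widetilde u_0,\widetilde\mu,\widetilde f)$ in the prescribed ball of radius $M$, and let $u,\widetilde u\in X^{3,\rho(x)}(\Pi_T^+)$ be the corresponding unique solutions supplied by Theorem~\ref{T1.3}. I would first introduce the boundary potential lifting $\psi\equiv J(t,x,y;\mu)\eta(2-x)$, its analogue $\widetilde\psi$ for $\widetilde\mu$, and set $\Psi\equiv\psi-\widetilde\psi$, $U\equiv u-\widetilde u-\Psi$. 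Applying Lemmas~\ref{L2.6}--\ref{L2.9} at smoothness level $s=4$ produces
\[
\|\Psi\|_{X^{3,\rho(x)}(\Pi_T^+)}+\|\Psi\|_{L_2(0,T;W^3_{\infty,+})}+\|\Psi_t\|_{L_2(0,T;L_{\infty,+})}\leq c(T)\|\mu-\widetilde\mu\|_{\widetilde H^{4/3,4}(B_T)},
\]
together with an analogous bound for $\Psi|_{t=0}$ in $\widetilde H^{3,\rho(x)}_+$. Consequently $U_0\equiv u_0-\widetilde u_0-\Psi|_{t=0}$ and $F\equiv f-\widetilde f-(\Psi_t+b\Psi_x+\Psi_{xxx}+\Psi_{xyy})$ have norms controlled by the three data differences.

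By construction $U$ satisfies homogeneous boundary data $U|_{x=0}=0$, the prescribed $y$-boundary conditions of~\eqref{1.4}, and the linear equation
\[
U_t+bU_x+U_{xxx}+U_{xyy}=F+\partial_x\bigl[-\tfrac12(u+\widetilde u)(U+\Psi)\bigr].
\]
The next step is to apply Lemma~\ref{L2.17} to this equation, with $f_0\equiv F$ and $f_1\equiv -\tfrac12(u+\widetilde u)(U+\Psi)$. Since $\|u\|_{X^{3,\rho(x)}(\Pi_T^+)},\|\widetilde u\|_{X^{3,\rho(x)}(\Pi_T^+)}\leq M$, the interpolation inequality~\eqref{1.15}, the embedding~\eqref{1.18}, the trace inequality~\eqref{1.17}, and the hypothesis $\rho'(x)\geq 1$ together form exactly the toolkit used in estimates~\eqref{3.39}--\eqref{3.48}; redoing those estimates with $(u+\widetilde u)$ in place of $\psi$ and $(U+\Psi)$ in place of $v$ yields bounds on $f_1,\,f_{1x},\,f_{1xyy}$ and $f_{1t}$ in $L_2(0,t;L_{2,+}^{\rho^2(x)/\rho'(x)})$ that are linear in $\|U+\Psi\|_{X^{3,\rho(x)}(\Pi_t^+)}$ with constants depending only on $M$ and $T$. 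Inserting these into~\eqref{2.63} and running a standard Gronwall/short-time absorption argument, iterated over a finite partition of $[0,T]$ (admissible because $M$ is fixed on the ball), gives the desired Lipschitz bound on $\|U\|_{X^{3,\rho(x)}(\Pi_T^+)}$; the triangle inequality with the bound on $\Psi$ then completes the proof.

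I expect the main obstacle to lie in the derivative bookkeeping inside $f_{1xyy}$ and $f_{1t}$. The most delicate contributions are $(u+\widetilde u)\,\partial_x\partial_y^2(U+\Psi)$ and, symmetrically, $(\partial_x\partial_y^2(u+\widetilde u))(U+\Psi)$. In the first, the high-order factor carries the weight $(\rho')^{1/2}$ available from the $X^{3,\rho(x)}$-seminorm, while the coefficient is absorbed via $\|(u+\widetilde u)\rho^{1/2}\|_{L_{\infty,+}}\leq c\|u+\widetilde u\|_{\widetilde H^{2,\rho(x)}_+}$ from~\eqref{1.18}; the weight mismatch required to land in $L_{2,+}^{\rho^2/\rho'}$ is recovered from $\rho^2/\rho'\leq \rho^{3/2}(\rho')^{1/2}$, which relies crucially on $\rho'\geq 1$. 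The symmetric term is handled dually, placing $U+\Psi$ in $L_{\infty,+}$ via its $\widetilde H^{2,\rho(x)}_+$-norm. The $t$-derivative $f_{1t}$ is analogous, using $u_t,\widetilde u_t\in X^{\rho(x)}(\Pi_T^+)$ from Lemma~\ref{L2.15} together with~\eqref{1.18}. Once this distribution of derivatives between the two factors is carried out cleanly, the Gronwall absorption is automatic and the Lipschitz estimate follows.
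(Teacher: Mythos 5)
Your proposal is correct and follows essentially the same route as the paper: the paper also lifts the boundary data via $\psi$, $\widetilde\psi$, considers the difference $U-\widetilde U=u-\widetilde u-\Psi$ (which is exactly your $U$), and reruns the estimates \eqref{3.39}--\eqref{3.50} from Lemma~\ref{L3.3} to get a bound with a $t_0^{1/2}$-absorbable term, finishing by iteration over a partition of $[0,T]$. Your more explicit bookkeeping of $f_{1xyy}$ and $f_{1t}$ via \eqref{1.15}, \eqref{1.18} and $\rho'\geq 1$ is precisely what the paper's phrase ``similarly to \eqref{3.39}--\eqref{3.50}'' compresses.
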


\begin{proof}
Let $M>0$, let the functions $u_0$,$\mu$,$f$ satisfy the hypothesis of Theorem~\eqref{T1.3} and $\|(u_0,\mu,f)\|_{\widetilde H_+^{3,\rho(x)} \times \widetilde H^{4/3,4}(B_T) \times \mathcal F}\leq M$, then it follows from \eqref{3.51} that $\|u\|_{X^{3,\rho(x)}(\Pi_T^+)}\leq c_0(M)$. Define the functions $\psi$ and $U$ by formulas \eqref{3.3} and \eqref{3.5}. Let the triplet $(\widetilde u_0, \widetilde\mu,\widetilde f)$ be another one satisfying the same assumptions, define similarly the functions $\widetilde \psi$ and $\widetilde U$. Then similarly to \eqref{3.39}--\eqref{3.50} for $t_0\in (0,T]$
\begin{multline*}
\|U-\widetilde U\|_{X^{3,\rho(x)}(\Pi_{t_0}^+)} \leq c(M)\bigl(\|u_0-\widetilde u_0\|_{\widetilde H_+^{3,\rho(x)}} +
\|\mu-\widetilde\mu\|_{\widetilde H^{4/3,4}(B_T)} +\|f-\widetilde f\|_{\mathcal F}  \\+
t_0^{1/2}\|U-\widetilde U\|_{X^{3,\rho(x)}(\Pi_{t_0}^+)}\bigr).
\end{multline*}
Taking into account also that $\|\psi-\widetilde\psi\|_{X^{3,\rho(x)}(\Pi_T^+)} \leq c(T)\|\mu-\widetilde\mu\|_{\widetilde H^{4/3,4}(B_T)}$ we finish the proof by the standard argument.
\end{proof}

\section{Large-time decay of small solutions}\label{S5}

\begin{proof}[Proof of Theorem~\ref{T1.4}]
Let $\alpha>0$, $\rho(x)\equiv e^{2\alpha x}$, $u_0\in \widetilde H_+^{1,\rho(x)}$, $u_0(0,y)\equiv 0$, $\mu\equiv 0$, $f\equiv 0$. Consider the solution to problem \eqref{1.1}--\eqref{1.4} (in the cases a) and c)) $u\in X_w^{1,\rho(x)}(\Pi_T^+)\ \forall T$. Note that $uu_x\in L_2(0,T;L_{2,+}^{\rho(x)})$ (see, for example, \eqref{3.22}).

Apply Lemma~\ref{L2.12}, then equality \eqref{2.47} for $\rho\equiv 1$ provides, in fact, the conservation law \eqref{1.5}, in particular,
\begin{equation}\label{5.1}
\|u(t,\cdot,\cdot)\|_{L_{2,+}}\leq \|u_0\|_{L_{2,+}} \quad \forall t\geq 0.
\end{equation} 
Next, write down equality \eqref{2.47} for $\rho\equiv e^{2\alpha x}$:
\begin{multline}\label{5.2}
\iint u^2\rho\,dxdy + \iint_{B_t} u_x^2\big|_{x=0}\,dyd\tau +
2\alpha \int_0^t \!\!\iint (3u_x^2+u_y^2)\rho\,dxdyd\tau \\
-2\alpha(b+4\alpha^2) \int_0^t \!\! \iint u^2\rho\,dxdyd\tau
= \iint u_0^2\rho\,dxdy +
\frac{2\alpha}3 \int_0^t\!\!\iint u^3\rho\,dxdyd\tau.
\end{multline} 
Since $u^3\rho\in L_\infty(0,T;L_{1,+})$ equality \eqref{5.2} provides the following equality in a differential form:
for a.e. $t>0$
\begin{multline}\label{5.3}
\frac{d}{dt}\iint u^2\rho\,dxdy + \int_0^L u_x^2\big|_{x=0}\,dy +
2\alpha \iint (3u_x^2+u_y^2)\rho\,dxdy \\
-2\alpha(b+4\alpha^2) \iint u^2\rho\,dxdy
= \frac{2\alpha}3 \iint u^3\rho\,dxdy.
\end{multline} 
Continuing inequality \eqref{3.17}, we find with the use of \eqref{5.1} that uniformly with respect to $L$
\begin{equation}\label{5.4}
\frac{2}3 \iint u^3\rho\,dxdy \leq \frac12 \iint |Du|^2\rho\,dxdy  +
c (\|u_0\|_{L_{2,+}}+\|u_0\|_{L_{2,+}}^2)\iint u^2\rho\,dxdy.
\end{equation}
Inequalities \eqref{1.19} or \eqref{1.20} yield that for certain constant $c_0$
\begin{equation}\label{5.5}
\frac12 \iint u_y^2\rho\,dxdy \geq \frac{c_0}{L^2} \iint u^2\rho\,dxdy.
\end{equation}
Combining \eqref{5.3}--\eqref{5.5} we find that uniformly with respect to $\alpha$ and $L$
\begin{multline}\label{5.6}
\frac{d}{dt}\iint u^2\rho\,dxdy + \int_0^L u_x^2\big|_{x=0}\,dy +
\alpha \iint |Du|^2\rho\,dxdy \\
+\alpha\Bigl(\frac{c_0}{L^2}-2b-8\alpha^2-c (\|u_0\|_{L_{2,+}}+\|u_0\|_{L_{2,+}}^2)\Bigr) \iint u^2\rho\,dxdy
\leq 0.
\end{multline}
Choose $\displaystyle L_0= \frac12\sqrt{\frac{c_0}b}$ if $ b>0$, $\displaystyle \alpha_0 = \frac{\sqrt{c_0}}{8L}$, $\epsilon>0$ satisfying an inequality $\displaystyle \epsilon_0+\epsilon_0^2 \leq \frac {c_0}{8cL^2}$, $\beta = \displaystyle \frac{c_0}{4L^2}$. Then it follows from \eqref{5.6} that
\begin{equation}\label{5.7}
\frac{d}{dt}\iint u^2\rho\,dxdy + \int_0^L u_x^2\big|_{x=0}\,dy +
\alpha \iint |Du|^2\rho\,dxdy 
+\alpha\beta\iint u^2\rho\,dxdy
\leq 0.
\end{equation}
In particular, inequality \eqref{5.7} provides estimate \eqref{1.13} if  $u_0\in \widetilde H_+^{1,\rho(x)}$, $u_0(0,y)\equiv 0$. In the general case $u_0\in L_{2,+}^{\rho(x)}$ this estimate is obtained via closure with the use of Theorem~\ref{T4.1}. 

Moreover, since inequality \eqref{5.7} can be written in a form
$$
\frac{d}{dt}\Bigl[e^{\alpha\beta t} \iint u^2\rho\,dxdy\Bigr] +
e^{\alpha\beta t}\Bigl[\int_0^L u_x^2\big|_{x=0}\,dy +
\alpha \iint |Du|^2\rho\,dxdy\Bigr] \leq 0,
$$ 
we find (again if $u_0\in \widetilde H_+^{1,\rho(x)}$, $u_0(0,y)\equiv 0$) that
\begin{equation}\label{5.8}
\int_0^t e^{\alpha\beta \tau} \Bigl[\int_0^L u_x^2\big|_{x=0}\,dy +
\alpha \iint |Du|^2\rho\,dxdy\Bigr]\,d\tau \leq \|u_0\|^2_{L_{2,+}^{\rho(x)}}.
\end{equation}

Write down inequality \eqref{2.52} for $\gamma(t)\equiv e^{\alpha\beta t}$  and $f_1\equiv -uu_x$, then taking into account \eqref{5.8} we derive the following inequality:
\begin{multline}\label{5.9}
e^{\alpha\beta t}\iint (u_x^2+u_y^2)\rho\,dxdy +
2\alpha \int_0^t e^{\alpha\beta\tau} \iint (3u_{xx}^2+4u_{xy}^2+u_{yy}^2)\rho \,dxdyd\tau  \\ 
\leq c+ 2\int_0^t e^{\alpha\beta\tau} \iint uu_x\bigl[(u_x\rho)_x +u_{yy}\rho\bigr] \,dxdyd\tau.
\end{multline} 
Differentiate the corresponding equality \eqref{2.54} (for $f\equiv -uu_x$), multiply by $e^{\alpha\beta t}$ and integrate with respect to $t$:
\begin{multline}\label{5.10}
-\frac13 e^{\alpha\beta t} \iint u^3\rho\,dxdy +
2\int_0^t e^{\alpha\beta\tau} \iint uu_x(u_{xx} +u_{yy})\rho \,dxdyd\tau \\
+2\alpha\int_0^t e^{\alpha\beta\tau} \iint u^2(u_{xx} +u_{yy})\rho \,dxdyd\tau =
-\frac13 \iint u_0^3\rho\,dxdy  \\ 
-\frac{\alpha}3(\beta +2b) \int_0^t e^{\alpha\beta\tau}
\iint u^3\rho\,dxdyd\tau -
\frac{\alpha}2 \int_0^t e^{\alpha\beta\tau}\iint u^4\rho\,dxdyd\tau.
\end{multline}
Summing \eqref{5.9} and \eqref{5.10} we find that
\begin{multline}\label{5.11}
e^{\alpha\beta t}\iint (u_x^2+u_y^2-\frac13 u^3)\rho\,dxdy +
2\alpha \int_0^t e^{\alpha\beta\tau} \iint (3u_{xx}^2+4u_{xy}^2+u_{yy}^2)\rho \,dxdyd\tau  \\ 
\leq c+ 2\alpha\int_0^t e^{\alpha\beta\tau} \iint uu_x^2\rho\,dxdyd\tau - 
2\alpha\int_0^t e^{\alpha\beta\tau} \iint u^2(u_{xx} +u_{yy})\rho \,dxdyd\tau \\
-\frac{\alpha}3(\beta +2b) \int_0^t e^{\alpha\beta\tau}
\iint u^3\rho\,dxdyd\tau.
\end{multline} 
Estimating the integrals in the right side of \eqref{5.11} with the help of \eqref{3.29}--\eqref{3.32}, \eqref{1.13} and \eqref{5.8} yields:
$$
e^{\alpha\beta t}\iint (u_x^2+u_y^2-\frac13 u^3)\rho\,dxdy \leq c,
$$
where 
$$
\frac13 \iint u^3\rho\,dxdy \leq c\Bigl[\Bigl(\iint |Du|^2\rho\,dxdy\Bigr)^{1/2}
\iint u^2\rho\,dxdy +\Bigl(\iint u^2\rho\,dxdy\Bigr)^{3/2}\Bigr],
$$
and \eqref{1.14} follows.
\end{proof}

\end{document}